\theoremstyle{definition}
\newtheorem{defn}[equation]{Definition}
\newtheorem{defn'}[equation]{Definition'}
\theoremstyle{plain}
\newtheorem{thm}[equation]{Theorem}
\newtheorem{prop}[equation]{Proposition}
\newtheorem{fact}[equation]{Fact}
\newtheorem{lem}[equation]{Lemma}
\newtheorem{conj}[equation]{Conjecture}
\theoremstyle{remark}
\newtheorem{rem}[equation]{Remark}
\newtheorem{ex}[equation]{Example}
\newcommand{\Z}{\mathbb{Z}}
\newcommand{\R}{\mathbb{R}}
\newcommand{\bS}{\mathbb{S}}
\newcommand{\pt}{\mathrm{pt}}
\newcommand{\del}{\partial}
\newcommand{\dR}{\mathrm{dR}}
\newcommand{\Bord}{\mathrm{Bord}}
\newcommand{\cw}{\mathrm{cw}}
\newcommand{\hBord}{h\Bord^{G_\nabla}}
\newcommand{\Ori}{\mathrm{Ori}}
\newcommand{\HS}{\mathrm{HS}}
\newcommand{\Thom}{\mathrm{Thom}}
\DeclareMathOperator{\Hom}{Hom}
\def\beq#1\eeq{\begin{align}#1\end{align}}
\newcommand*{\addFileDependency}[1]{
  \typeout{(#1)}
  \@addtofilelist{#1}
  \IfFileExists{#1}{}{\typeout{No file #1.}}
}
\begin{document}

\title[Differential models for $I\Omega^G$, II]{Differential models for the Anderson dual to bordism theories and invertible QFT's, II}

\author[M. Yamashita]{Mayuko Yamashita}
\address{Department of Mathematics, Kyoto University, 
Kita-shirakawa Oiwake-cho, Sakyo-ku, Kyoto, 606-8502, Japan
}
\email{yamashita.mayuko.2n@kyoto-u.ac.jp}
\subjclass[]{}
\maketitle

\begin{abstract}
    This is the second part of the work on differential models of the Anderson duals to the stable tangential $G$-bordism theories $I\Omega^G$, motivated by classifications of invertible QFT's. 
    Using the model constructed in the first part \cite{YamashitaYonekura2021}, in this paper we show that pushforwards in generalized differential cohomology theories
    induces transformations between differential cohomology theories which refine the Anderson duals to multiplicative genera. 
    This gives us a unified understanding of an important class of elements in the Anderson duals with physical origins. 
\end{abstract}

\tableofcontents

\section{Introduction}
This is the second part of the work on differential models of the Anderson duals to the stable tangential $G$-bordism theories $I\Omega^G$ motivated by classifications of invertible QFT's. 
The generalized cohomology theory $I\Omega^G$ is conjectured by Freed and Hopkins \cite[Conjecture 8.37]{Freed:2016rqq} to classify deformation classes of possibly non-topological invertible 
quantum field theories (QFT's) on stable tangential $G$-manifolds. 
Motivated by this conjecture, in the previous paper \cite{YamashitaYonekura2021} by Yonekura and the author, we constructed a model $I\Omega^G_\dR$ of $I\Omega^G$ and its differential extension $\widehat{I\Omega^G_\dR}$ by abstractizing properties of partition functions for invertible QFT's. This paper is devoted to their relations with {\it multiplicative genera}. 
We show that {\it pushforwards} (also called {\it integrations}) in generalized differential cohomology theories allow us to construct differential refinements of certain cohomology transformations which arise from the Anderson dual to multiplicative genera and the module structures of the Anderson duals. 
This gives us a unified understanding of an important class of elements in the Anderson duals with physical origins. 
Moreover, having transformations in the differential level gives a more direct connection to physical picture, since they can be regarded as transformations of QFTs without taking deformation classes, recovering the information of partition functions. 

First, we explain the motivations of the previous paper. 
As we recall in Section \ref{sec_preliminary_2}, the differential group $(\widehat{I\Omega^G_\dR})^n(X)$ consists of pairs $(\omega, h)$, where $\omega \in \Omega^n_{\mathrm{clo}}(X; {H}^\bullet(MTG; \R))$ where $n$ is the total degree, and $h$ is a map which assigns $\R/\Z$-values to {\it differential stable tangential $G$-cycles} of dimension $(n-1)$ over $X$, which satisfy a compatibility condition with respect to bordisms. 
The physical interpretation is that $h$ is the complex phase of the partition function of an invertible QFT. 
For example, given a hermitian line bundle with unitary connection over $X$, the pair of the first Chern form and the holonomy function gives an element for $G = \mathrm{SO}$ and $n = 2$ (Subsection \ref{subsec_hol_target}). 
Similarly, given a hermitian vector bundle with unitary connection, we can construct even-degree elements for $G = \mathrm{Spin}^c$ using the reduced eta invariants of twisted Dirac operators (Subsection \ref{subsec_complex_eta}): this theory is related to anomaly of spinor field theory. 
Then, a natural mathematical question arises: {\it what are these elements mathematically?}
It is natural to expect a topological characterization of these elements. 
Questions of this kind also appears in \cite[Conjecture 9.70]{Freed:2016rqq} (also recalled in Conjecture \ref{conj_anomaly_fermion} below). 
This paper is devoted to this question. 
Actually, these examples are special cases of the general construction in this paper which we now explain. 

Now we explain the general settings. 
In this paper, the tangential structure groups $G = \{G_d, s_d, \rho_d\}_{d \in \Z_{\ge 0}}$ (see Section \ref{sec_preliminary_2}) is assumed to be {\it multiplicative}, i.e., the corresponding Madsen-Tillmann spectrum $MTG$ is equipped with a structure of a ring spectrum. 
Assume we are given a ring spectrum $E$ with a homomorphism of ring spectra, 
\begin{align*}
    \mathcal{G} \colon MTG \to E. 
\end{align*}
such $\mathcal{G}$ is also called a {\it multiplicative genus}, and examples include the usual orientation $\tau \colon MT\mathrm{SO} \to H\Z$ and the Atiyah-Bott-Shapiro orientations $\mathrm{ABS} \colon MT\mathrm{Spin}^c \to K$ and $\mathrm{ABS} \colon MT\mathrm{Spin}\to KO$. 

On the topological level, a ring homomorphism $\mathcal{G} \colon MTG \to E$ gives {\it pushforwards} in $E$ for proper $G$-oriented smooth maps. 
{\it Pushforwards in differential cohomology}, or {\it differential pushforwards}, are certain differential refinements of topological pushforwards. 
Basically, they consist of corresponding maps in $\widehat{E}$ for each proper map with a ``differential $E$-orientation''. 
The formulations depend on the context. 
To clarify this point, in this paper we use the differential extension $\widehat{E}_\HS$ of $E$ constructed by Hopkins-Singer \cite{HopkinsSinger2005}, and use the formulation of differential pushforwards in that paper. 
Throughout this paper, we assume that $E$ is {\it rationally even}, i.e., $E^{2k+1}(\pt) \otimes \R=0$ for any integer $k$. 
In this case, by \cite{Upmeier2015} there exists a canonical multiplicative structure on the Hopkins-Singer's differential extension $\widehat{E}_{\mathrm{HS}}^*(-; \iota_E)$ associated to a fundamental cycle $\iota_E \in Z^0(E; V_E^\bullet)$. 
The theory of differential pushforwards gets simple in this case.
This point is explained in Subsection \ref{subsec_HS_push} and Appendix \ref{app_diff_push}.
Of course our result applies to any model of differential extension $\widehat{E}$ of $E$ which is isomorphic to the Hopkins-Singer's model. 
Practically, most known examples of differential extensions are isomorphic to Hopkins-Singer's model (see Footnote \ref{footnote_uniqueness}).  
The holonomy functions are examples of differential pushforwards in the case $\tau \colon MT\mathrm{SO} \to H\Z$, and the reduced eta invariants are those for $\mathrm{ABS} \colon MT\mathrm{Spin}^c \to K$ by the result of Freed and Lott \cite{FL2010} and Klonoff \cite{KlonoffDiffK}.

Let $n$ be an integer such that $E^{1-n}(\pt) \otimes \R = 0$.  
As we show in Subsection \ref{subsec_genus_main}, the above data defines the following natural transformation, 
\begin{align}\label{eq_goal_theory}
    \Phi_{\mathcal{G}} \colon \widehat{E}_{\mathrm{HS}}^*(-; \iota_E) \otimes IE^n(\pt) \to (\widehat{I\Omega^G_{\mathrm{dR}}})^{*+n}(-), 
\end{align}
on $\mathrm{Mfd}^{\mathrm{op}}$ (Definition \ref{def_theory}).

The main result of this paper is the following topological characterization of the transformation \eqref{eq_goal_theory}. 
\begin{thm}[{=Theorem \ref{thm_genus_theory_2}}]\label{thm_genus_theory}
In the above settings, 
let $X$ be a manifold and $k$ be an integer. 
For $\widehat{e} \in \widehat{E}_\HS^k(X; \iota_E)$ and $\beta \in IE^n(\pt)$, the element $I(\Phi_{\mathcal{G}}(\widehat{e} \otimes \beta)) \in (I\Omega^G)^{k+n}(X) = [X^+\wedge MTG ,  \Sigma^{k+n}I\Z]$ coincides with the following composition, 
\begin{align}\label{eq_genus_theory}
    X^+\wedge MTG \xrightarrow{e \wedge \mathcal{G}} 
    \Sigma^k E \wedge E \xrightarrow{\mathrm{multi}}
    \Sigma^k E \xrightarrow{\beta} \Sigma^{k+n}I\Z. 
\end{align}
Here we denoted $e := I(\widehat{e}) \in E^k(X)$. 
\end{thm}

As we will see in Subsection \ref{subsec_genus_example}, this result applies to the above mentioned examples as follows. 
\begin{ex}[Subsection \ref{subsec_hol_target}]
Set $E = H\Z$ with $\tau \colon MT\mathrm{SO} \to H\Z$. We have the Anderson self-duality element $\gamma_{H\Z} \in IH\Z^0(\pt)$. 
The transformation
\begin{align}
     \Phi_{\tau}(-\otimes \gamma_H) \colon \widehat{H}^2(X; \Z) \to (\widehat{I\Omega^{\mathrm{SO}}_{\mathrm{dR}}})^{2}(X)
\end{align}
sends the class of a hermitian line bundle with connection $[L, \nabla] \in \widehat{H}^2(X; \Z)$ to the element $(c_1(\nabla), \mathrm{Hol}_{\nabla}) \in (\widehat{I\Omega^{\mathrm{SO}}_{\mathrm{dR}}})^{2}(X)$. 
Applying Theorem \ref{thm_genus_theory}, we see that its deformation class coincides with the following composition, 
\begin{align*}
    X^+ \wedge MT\mathrm{SO} \xrightarrow{ c_1(L) \wedge \tau} 
    \Sigma^2 H\Z\wedge H\Z  \xrightarrow{\mathrm{multi}}
    \Sigma^2 H\Z \xrightarrow{\gamma_H} \Sigma^{2}I\Z. 
\end{align*}
\end{ex}

\begin{ex}[Subsections \ref{subsec_complex_eta} and \ref{subsec_anomaly}]
Set $E=K$ with the Atiyah-Bott-Shapiro orientations $\mathrm{ABS} \colon MT\mathrm{Spin}^c \to K$. 
We have the Anderson self-duality element $\gamma_K\in IK^0(\pt)$. The transformation
\begin{align}
    \Phi_{\mathrm{ABS}}(-\otimes \gamma_K) \colon \widehat{K}^{2k}(X) \to (\widehat{I\Omega^{\mathrm{Spin}^c}_{\mathrm{dR}}})^{2k}(X). 
\end{align}
maps the class $[W, h^W, \nabla^W, 0] \in \widehat{K}^0(X) \simeq \widehat{K}^{2k}(X)$ of hermitian vector bundle with unitary connection to the element
$ \left((\mathrm{Ch}(\nabla^W) \otimes \mathrm{Todd})|_{2k}, \overline{\eta}_{\nabla^W}\right) \in \left(\widehat{I\Omega^{\mathrm{Spin}^c}}\right)^{2k}(X)$. Applying Theorem \ref{thm_genus_theory}, we see that its deformation class coincides with the following composition, 
\begin{align*}
    X^+ \wedge MT\mathrm{Spin}^c \xrightarrow{[E]\wedge \mathrm{ABS}} 
    K\wedge K \xrightarrow{\mathrm{multi}}
    K \xrightarrow[\simeq]{\mathrm{Bott}} \Sigma^{2k}K \xrightarrow{\gamma_K} \Sigma^{2k}I\Z. 
\end{align*}

In Subsection \ref{subsec_anomaly}, we explain that this transformation can be interpreted as taking anomaly theories of complex spinor field theories. 
\end{ex}

The paper is organized as follows. 
In Section \ref{sec_preliminary_2} we recall the definition of the differential models in \cite{YamashitaYonekura2021}. 
Section \ref{sec_main_genus} is the main part of this paper. 
We construct the natural transformation \eqref{eq_goal_theory} and prove Theorem \ref{thm_genus_theory} in Subsection \ref{subsec_genus_main}. 
We explain some examples, as well as relation with anomalies, in Subsection \ref{subsec_genus_example}. 
As we explain in Subsection \ref{subsec_HS_push}, there are certain subtleties regarding the formulations of differential pushforwards. 
In Appendix \ref{app_diff_push}, we collect the necessary results concerning differential pushforwards for submersions when $E$ is rationally even. 

\subsection{Notations and Conventions}

\begin{itemize}
\item By {\it manifolds}, we mean smooth manifolds with corners. 
We use the conventions explained in \cite[Subection 2.3]{YamashitaYonekura2021}. 
\item The space of $\R$-valued differential forms on a manifold $X$ is denoted by $\Omega^*(X)$. 
    \item We deal with differential forms with values in a graded real vector space $V^\bullet$. 
In the notation $\Omega^n(-; V^\bullet)$, $n$ means the {\it total} degree. 
In the case if $V^\bullet$ is infinite-dimensional, we topologize it as the colimit of all its finite-dimensional subspaces with the caonical topology, and set $\Omega^n(X; V^\bullet) := C^\infty(X; (\wedge T^*X \otimes_\R V^\bullet)^n)$. 
This means that, any element in $\Omega^n(X; V^\bullet)$ can locally be written as a finite sum $\sum_{i} \xi_i \otimes \phi_i$
with $\xi_i \in \Omega^{m_i}(X)$ and $\phi_i \in V^{n-m_i}$ for some $m_i$ for each $i$. 
The space of closed forms are denoted by $\Omega_{\mathrm{clo}}^n(-; V^\bullet)$. 
    \item For a manifold $X$ and a real vector space $V$, we denote by $\underline{V}$ the trivial bundle $\underline{V} := X \times V$ over $X$. 
\item For a topological space $X$, we denote by $p_X \colon X \to \pt$ the map to $\pt$. 
We set $X^+ := (X \sqcup \{*\}, \{*\})$. 
\item For two topological spaces $X$ and $Y$, we denote by $\mathrm{pr}_X \colon X \times Y \to X$ the projection to $X$. 
\item We set $I := [0, 1]$. 
\item For a real vector bundle $V$ over a topological space, we denote its orientation line bundle (rank-$1$ real vector bundle) by $\Ori(V)$. 
For a manifold $M$, we set $\Ori(M) := \Ori(TM)$. 
\item For a {\it spectrum} $\{E_n\}_{n \in \Z}$, we require the adjoints $E_n \to \Omega E_{n+1}$ of the structure homomorphisms are homeomorphisms. 
For a sequence of pointed spaces $\{E'_n\}_{n \in \Z_{\ge a}}$ with maps $\Sigma E'_n \to E'_{n+1}$, we define its {\it spectrification} $LE' :=\{(LE')_n\}_{n \in \Z}$ to be the spectrum given by
\begin{align*}
    (LE')_n := \varinjlim_k \Omega^k E'_{n+k}. 
\end{align*}
    
\item For a generalized cohomology theory $E$, we set $V_E^\bullet := E^\bullet(\pt) \otimes \R$. 

\item The {\it Chern-Dold homomorphism} \cite[Chapter II, 7.13]{rudyak1998} for a generalized cohomology theory $E$ is denoted by
\begin{align}\label{eq_chd_E}
    \mathrm{ch} \colon E^*(-) \to H^*(-; V_E^\bullet). 
\end{align}

\item We use the axiomatic framework of generalized differential cohomology given in \cite{BSDiffKSurvey} (also recalled in \cite[Subsection 2.2]{YamashitaYonekura2021}). 
We abuse the notations $R$, $a$ and $I$ for the structure maps for general differential cohomology theories, following the standard notations in those papers. 
\end{itemize}

\section{Preliminaries from \cite{YamashitaYonekura2021}}\label{sec_preliminary_2}
In this section we recall necessary parts of the previous paper \cite{YamashitaYonekura2021}. 
In \cite{YamashitaYonekura2021}, we constructed a model $\widehat{I\Omega^G_{\dR}}$ of a differential extension of the Anderson dual to the tangential $G$-bordism homology theory $I\Omega^G$. 

First we recall the definition of the {\it Anderson duals} to spectra (see \cite[Section 2.1]{YamashitaYonekura2021}, \cite[Appendix B]{HopkinsSinger2005} and \cite[Appendix B]{FMS07}). 
The functor $X \mapsto \mathrm{Hom}(\pi_*(X), \R/\Z)$ on the stable homotopy category is represented by a spectrum denoted by $I(\R/\Z)$.
The {\it Anderson dual to the sphere spectrum}, denoted by $I\Z$, is defined as the homotopy fiber of the morphism $H\R \to I(\R/\Z)$ representing the transformation $\mathrm{Hom}(\pi_*(-), \R)  \to \mathrm{Hom}(\pi_*(-), \R/\Z)$. 
For any spectra $E$, the {\it Anderson dual to $E$}, denoted by $IE$, is defined to be the function spectrum from $E$ to $I\Z$, $IE := F(E, I\Z)$. 
This implies that we have the following exact sequence for any spectra $X$. 
\begin{align}\label{eq_exact_IE_part2}
    \cdots \to \mathrm{Hom}(E_{n-1}(X), \R) &\xrightarrow{\pi} \Hom(E_{n-1}(X), \R/\Z) \to IE^n(X) \\ & \to \mathrm{Hom}(E_{n}(X), \R) \xrightarrow{\pi} \Hom(E_n(X), \R/\Z) \to \cdots \ (\mbox{exact}). \notag
\end{align}

In \cite{YamashitaYonekura2021} and the current paper, we are particularly interested in the Anderson dual to the $G$-bordism homology theory. 
Here, 
$G = \{G_d, s_d, \rho_d\}_{d \in \Z_{\ge 0}}$ is a sequence of compact Lie groups equipped with homomorphisms $s_d \colon G_d \to G_{d+1}$ and $\rho_d \colon G_d \to \mathrm{O}(d, \R)$ for each $d$, such that the following diagram commutes. 
 \begin{align*}
     \xymatrix{
 G_d \ar[r]^-{\rho_d} \ar[d]^{s_d} & \mathrm{O}(d, \R) \ar[d] \\
 G_{d+1} \ar[r]^-{\rho_{d+1}} & \mathrm{O}(d+1, \R)  
 }. 
 \end{align*}
Here we use the inclusion $\mathrm{O}(d, \R) \hookrightarrow \mathrm{O}(d+1, \R) $ defined by 
\begin{align*}
     A \mapsto \left[
     \begin{array}{c|c}
     1 & 0  \\ \hline
     0 &A 
     \end{array}
     \right]  
 \end{align*}
throughout this paper. 
Given such $G$, the stable tangential $G$-bordism homology theory assigns the stable tangential bordism group $(\Omega^G)_*(-)$. 
It is represented by the {\it Madsen-Tillmann spectrum} $MTG$, which is a variant of the Thom spectrum $MG$. 
For details see for example \cite[Section 6.6]{Freed19}. 
In this paper we take $MTG$ and $MG$ to be a spectrum as in \cite[(4.60)]{HopkinsSinger2005}. 
The Anderson dual $I\Omega^G$ fits into the following exact sequence. 
\begin{align}\label{eq_exact_IOmega_Part2}
    \cdots \to \mathrm{Hom}(\Omega^G_{n-1}(X), \R) &\to \Hom(\Omega^G_{n-1}(X), \R/\Z) \to (I\Omega^G)^n(X) \\ & \to \mathrm{Hom}(\Omega^G_{n}(X), \R) \to \Hom(\Omega^G_n(X), \R/\Z) \to \cdots \ (\mbox{exact}). \notag
\end{align}

Now we proceed to recall the definition of $\widehat{I\Omega^G_\dR}$. 
In \cite{YamashitaYonekura2021}, we defined the relative groups $\widehat{I\Omega^G_\dR}(X, Y)$. 
But since we only deal with the absolute case $X = (X, \varnothing)$ in this paper, we concentrate on this case. 
To recall the definition of $\widehat{I\Omega^G_\dR}$, we first recall the {\it differential stable $G$-structures} on vector bundles. 

\begin{defn}[{Differential stable $G$-structures on vector bundles, \cite[Definition 3.1]{YamashitaYonekura2021}}]\label{def_diff_Gstr_vec_part2}
Let $V$ be a real vector bundle of rank $n$ over a manifold $M$. 
\begin{enumerate}
    \item 
A {\it representative of differential stable $G$-structure} on $V$ is a quadruple $\widetilde{g} = (d, P, \nabla, \psi)$, where $d \ge n$ is an integer, 
$(P, \nabla)$ is a principal $G_d$-bundle with connection over $M$ and
$\psi \colon P \times_{\rho_d} \R^d \simeq  \underline{\R}^{d-n} \oplus V $ is an isomorphism of vector bundles over $M$. 
\item We define the {\it stabilization} of such $\widetilde{g}$ by $\widetilde{g}(1) := (d+1, P(1) := P \times_{s_{d}}G_{d+1}, \nabla(1), \psi(1))$, where $\nabla(1)$ and $\psi(1)$ are naturally induced on $P(1)$ from $\nabla$ and $\psi$, respectively. 
\item A {\it differential stable $G$-structure} $g$ on $V$ is a class of representatives $\widetilde{g}$ under the relation $\widetilde{g} \sim_{\mathrm{stab}} \widetilde{g}(1)$. 
\item Suppose we have two representatives of the forms $\widetilde{g} = (d, P, \nabla, \psi)$ and $\widetilde{g} = (d, P, \nabla, \psi')$, such that $\psi$ and $\psi'$ are homotopic. 
In this case, the resulting differential stable $G$-structures $g$ and $g'$ are called {\it homotopic}. 
\end{enumerate}
\end{defn}

If we forget the information of the connection $\nabla$, we get the corresponding notion of {\it (topological) differential stable $G$-structures}. 
For a differential stable $G$-structure $g$, we denote the underlying topological structure by $g^{\mathrm{top}}$. 
Similar remarks apply to the various definitions below. 

We also recall the normal variant, which we use in Appendix \ref{app_diff_push}. 

\begin{defn}[{Differential stable normal $G$-structures on vector bundles, \cite[Definition 4.75]{YamashitaYonekura2021}}]\label{def_diff_Gstr_vec_normal_part2}
Let $V$ be a real vector bundle of rank $n$ over a manifold $M$. 
\begin{enumerate}
    \item[(1)]
A {\it representative of differential stable normal $G$-structure} on $V$ is a quadruple $\widetilde{g}^\perp = (d, P, \nabla, \psi)$, where $d \ge n$ is an integer, 
$(P, \nabla)$ is a principal $G_{d-n}$-bundle with connection over $M$ and
$\psi \colon (P \times_{\rho_{d-n}} \R^{d-n}) \oplus V \simeq  \underline{\R}^{d}  $ is an isomorphism of vector bundles over $M$. 
\item[(2), (3)] We define the {\it stabilization} of such $\widetilde{g}^\perp$ in the same way as Definition \ref{def_diff_Gstr_vec_part2}, and a {\it differential stable normal $G$-structure} $g^\perp$ on $V$ is defined to be a class of representatives under the stabilization relation. 
\item[(4)] We define the {\it homotopy} relation between two such $g^\perp$'s also in the same way. 
\end{enumerate}
\end{defn}

A {\it differential stable tangential $G$-structure} on a manifold $M$ is a differential stable $G$-structure on $TM$. 
Given a manifold $X$, a {\it differential stable tangential $G$-cycle} of dimension $n$ over $X$ is a triple $(M, g, f)$, where $M$ is an $n$-dimensional closed manifold, $g$ is a differential stable tangential $G$-structure on $M$ and $f \in C^\infty(M, X)$. 
Using these cycles, we defined
\begin{itemize}
    \item The abelian group $\mathcal{C}^{G_\nabla}_n(X)$, consisting of the equivalence classes $[M, g, f]$ of differential stable tangential $G$-cycles of dimension $n$ over $X$, with the equivalence relation generated by isomorphisms, opposite relation and homotopy relation \cite[Definition 3.5]{YamashitaYonekura2021}. 
    \item The Picard groupoid $\hBord_{n}(X)$, whose objects are differential stable tangential $G$-cycles $(M, g, f)$ of dimension $n$ over $X$, and morphisms are the bordism classes $[W,g_W, f_W]$ of bordisms $(W,g_W, f_W) \colon (M_-, g_-, f_-) \to (M_+, g_+, f_+)$ \cite[Definition 3.8]{YamashitaYonekura2021}. 
\end{itemize}
By the theorem of Pontryagin-Thom we have an equivalence of Picard groupoids \cite[Lemma 3.10]{YamashitaYonekura2021}, 
\begin{align}\label{eq_lem_cat_equivalence_part2}
    h\Bord^{G_\nabla}_n(X) \simeq \pi_{\le 1}(L(X^+\wedge MTG)_{-n}).  
\end{align}
Here the right hand side denotes the fundamental Picard groupoid. 

Let $\R_{G_d}$ denote the $G_d$ module with the underlying vector space $\R$ and the $G_d$-action by $\mathrm{det} \circ \rho_d \colon G \to \{\pm 1\}$.  
By the Thom isomorphism, we have 
\begin{align*}
    N_G^\bullet &:= H^\bullet(MTG; \R) \simeq  \varprojlim_{d}(\mathrm{Sym}^{\bullet/2}\mathfrak{g}_d^* \otimes_\R \R_{G_d})^{G_d} \simeq H^\bullet(MG; \R) =: N_{G^\perp}^\bullet. 
\end{align*}
The fourth arrow in \eqref{eq_exact_IOmega_Part2} gives the homomorphism
\begin{align}\label{eq_ch_N_part2}
    \mathrm{ch}' \colon (I\Omega^G)^*(X) \to H^*(X; N_G^\bullet) \simeq \Hom(\Omega^G_*(X), \R). 
\end{align}
By \cite[Proposition 4.9]{YamashitaYonekura2021} we have a canonical homomorphism
\begin{align}\label{eq_exact_V_N_part2}
    q\colon V_{I\Omega^G}^n \xrightarrow{q} N_G^n . 
\end{align}
The map $q$ is isomorphism if $\Omega^G_n(\pt)$ is finitely generated for all $n$. 

Let $\mathcal{V}^*$ be any $\Z$-graded vector space over $\R$. 
Given a vector bundle $V \to M$ with a differential stable $G$-structure $g$, we get a homomorphism of $\Z$-graded real vector spaces by the Chern-Weil construction \cite[Definition 4.4 and Remark 4.10]{YamashitaYonekura2021}, 
\begin{align}\label{eq_def_cw_hom_part2}
    \mathrm{cw}_g  \colon \Omega^*(M; H^\bullet(MTG; \mathcal{V}^*)) = \Omega^*\left(M; H^\bullet(MG; \mathcal{V}^*)\right) \to \Omega^*(M; \mathrm{Ori}(V)\otimes_\R  \mathcal{V}^*). 
\end{align}
Since the orientation bundle of a vector bundle and its normal bundle are canonically identified, for $V \to M$ equipped with a differential stable normal $G$-structure $g^\perp$, we also get
\begin{align}\label{eq_def_cw_hom_normal_part2}
    \mathrm{cw}_{g^\perp} \colon \Omega^*(M; H^\bullet(MTG; \mathcal{V}^*)) = \Omega^*\left(M; H^\bullet(MG; \mathcal{V}^*)\right) \to \Omega^*(M; \mathrm{Ori}(V)\otimes_\R  \mathcal{V}^*). 
\end{align}

For $\omega\in \Omega_{\mathrm{clo}}^n(X; H^\bullet(MTG; \mathcal{V}^*))$ we get a homomorphism
\begin{align}\label{eq_def_cw_omega_mor_part2}
    \mathrm{cw}(\omega) \colon \Hom_{h\Bord^{G_\nabla}_{N-1}(X)}((M_-, g_-, f_-), (M_+, g_+, f_+)) &\to \mathcal{V}^{n-N} \\
    [W, g_W, f_W] &\mapsto \int_W \mathrm{cw}_{g_W}((f_W)^*\omega),  \notag
\end{align}
for each pair of objects $(M_\pm, g_\pm, f_\pm)$ in $h\Bord^{G_\nabla}_{N-1}(X)$. 
In the following definition, we use \eqref{eq_def_cw_omega_mor_part2} in the case $\mathcal{V}^* = \R$ and $n = N$. 

\begin{defn}[{$(\widehat{I\Omega^G_{\mathrm{dR}}})^*$ and $(I\Omega^G_{\mathrm{dR}})^*$, \cite[Definition 4.15]{YamashitaYonekura2021}}]
Let $X$ be a manifold and $n \in \Z_{\ge 0}$. 
\begin{enumerate}
    \item 
Define $(\widehat{I\Omega^G_{\mathrm{dR}}})^n(X)$ to be an abelian group consisting of pairs $(\omega, h)$, such that
\begin{enumerate}
    \item $\omega$ is a closed $n$-form\footnote{Recall that $n$ is the {\it total} degree. } $\omega\in \Omega_{\mathrm{clo}}^n(X; N_{G}^\bullet)$. 
    \item $h$ is a group homomorphism
    $h \colon \mathcal{C}^{G_\nabla}_{n-1}(X) \to \R/\Z$. 
\item $\omega$ and $h$ satisfy the following compatibility condition. 
Assume that we are given two objects $(M_-, g_-, f_-)$ and $(M_+, g_+, f_+)$ in $h\Bord^{G_\nabla}_{n-1}(X)$ and a morphism $[W, g_W, f_W]$ from the former to the latter. 
Then we have
\begin{align*}
    h([M_+, g_+, f_+]) - h([M_-, g_-, f_-]) = \mathrm{cw}(\omega)([W, g_W, f_W]) \pmod \Z.
\end{align*}
\end{enumerate}
Abelian group structure on $(\widehat{I\Omega^G_{\mathrm{dR}}})^n(X)$ is defined in the obvious way. 

\item
We define a homomorphsim of abelian groups, 
\begin{align*}
    a \colon \Omega^{n-1}(X; N_{G}^\bullet)/\mathrm{Im}(d) &\to  (\widehat{I\Omega^G_{\mathrm{dR}}})^n(X) \\
    \alpha &\mapsto (d\alpha, \mathrm{cw}(\alpha)). \notag
\end{align*}
Here the homomorphism $\mathrm{cw}(\alpha) \colon \mathcal{C}^{G_\nabla}_{ n-1}(X) \to \R/\Z$ is defined by
\begin{align*}
    \mathrm{cw}(\alpha) ([M, g, f]) := \int_M \mathrm{cw}_g(f^*\alpha) \pmod \Z. 
\end{align*}
We set
\begin{align*}
    (I\Omega^G_{\mathrm{dR}})^n(X) := (\widehat{I\Omega^G_{\mathrm{dR}}})^n(X)/ \mathrm{Im}(a). 
\end{align*}
\end{enumerate}
For $n \in \Z_{< 0}$ we set $(\widehat{I\Omega^G_{\mathrm{dR}}})^n(X) = 0$ and $(I\Omega^G_{\mathrm{dR}})^n(X) = 0$. 
\end{defn}

We defined the structure homomorphisms $R$, $a$ and $I$ along with the $S^1$-integration map $\int$ for $\widehat{I\Omega^G_\dR}$. 
One of the main results of \cite{YamashitaYonekura2021} is the following. 
\begin{thm}[{\cite[Theorem 4.51]{YamashitaYonekura2021}}]
We have a natural isomorphism of functors $\mathrm{Mfd} \to \mathrm{Ab}^\Z$, 
\begin{align*}
    (I\Omega^G_{\mathrm{dR}})^* \simeq (I\Omega^G)^*
\end{align*}
Moreover, the functor $\widehat{I\Omega^G_{\mathrm{dR}}}$, along with the structure maps introduced in \cite{YamashitaYonekura2021}, is a differential extension with $S^1$-integration of the pair $\left((I\Omega^G)^*, \mathrm{ch}'\right)$, where $\mathrm{ch}'$ is defined in \eqref{eq_ch_N_part2}. 
\end{thm}

\section{Pushforwards in differential cohomologies and the Anderson duality}\label{sec_main_genus}

This is the main section of this paper. 
The main part is Subsection \ref{subsec_genus_main}, where we construct the natural transformation \eqref{eq_goal_theory} and prove Theorem \ref{thm_genus_theory}. 
Subsections \ref{subsec_HS_push} and \ref{subsec_genus_functor} are preparation for the construction and proof. 
We explain some examples, as well as relations with anomaly, in Subsection \ref{subsec_genus_example}. 

\subsection{Preliminary--Differential pushforwards in the Hopkins-Singer model}\label{subsec_HS_push}
In this subsection, we briefly explain the differential extensions of generalized cohomology theories constructed by Hopkins-Singer and the differential pushforwards (called {\it integration} in \cite{HopkinsSinger2005}) in that model. 
We explain it in more detail in Appendix \ref{app_diff_push}.  

On the topological level, a ring homomorphism $\mathcal{G} \colon MTG \to E$ gives {\it pushforwards} in $E$ for $G$-oriented proper smooth maps. 
For proper smooth maps $p \colon N \to X$ of relative dimension $r := \dim N - \dim X$ with (topological) stable relative tangential $G$-structures $g_{p}^{\mathrm{top}}$, we get the corresponding pushforward map, 
\begin{align}\label{eq_top_push}
    (p, {g_{p}^{\mathrm{top}}})_* \colon E^*(N) \to E^{* -r}(X). 
\end{align}
In particular in the case $X = \pt$, for a closed manifold $M$ of dimension $n$ with a stable tangential $G$-structure $g^{\mathrm{top}}$ (\cite[Definition 3.2]{YamashitaYonekura2021}), we get
\begin{align*}
    (p_M, g^{\mathrm{top}})_* \colon E^*(M) \to E^{*-n}(\pt). 
\end{align*}

There are notions of differential refinements of the pushforward maps in $\widehat{E}$. 
For example see \cite[Section 4.10]{HopkinsSinger2005}, \cite[Section 2]{BunkeSchickSchroderMU} and \cite[Section 4.8 -- 4.10]{Bunke2013}. 
Basically, they consist of corresponding maps in $\widehat{E}$ for each proper map with a ``differential $E$-orientation''. 
The formulations depend on the context. 
In this paper, we adopt the one by Hopkins-Singer\footnote{
In particular we use the differential extension $\widehat{E}_\HS$. 
Practically this is not restrictive. 
We are assuming $E$ is rationally even and multiplicative, so $\widehat{E}_\HS$ is equipped with a canonical multiplicative structure by \cite{Upmeier2015}. 
Thus, when the coefficients of $E$ are countably generated, we can apply the uniqueness result in \cite[Theorem 1.7]{BunkeSchick2010} to conclude that any other multiplicative differential extension (defined on the category of all smooth manifolds) is isomorphic to $\widehat{E}_\HS$. 
\label{footnote_uniqueness}
}. 

Hopkins and Singer gave a model of differential extensions, which we denote by $\widehat{E}_\HS^*(-; \iota_E)$, for any spectrum $E$, in terms of {\it differential function complexes}. 
In general we choose a $\Z$-graded vector space $V^\bullet$, and a singular cocycle $\iota_E \in Z^0(E; V^\bullet) = \varprojlim_{n}Z^n(E_{n}; V^\bullet)$. 
Then for each $n$ and for each manifold $X$, we get a simplicial complex called {\it differential function complex}, 
\begin{align*}
    (E_n; \iota_n)^X = (E; \iota)^X_n, 
\end{align*}
consisting of {\it differential functions} $X \times \Delta^\bullet \to (E_n; \iota_n)$. 
This complex has a filtration $\mathrm{filt}_{s}(E; \iota)^X_n$, $s \in \Z_{\ge 0}$. 
The differential cohomology group is defined as (it is denoted by $E(n)^n(X; \iota)$ in \cite{HopkinsSinger2005}),
\begin{align*}
    \widehat{E}^n_\HS(X; \iota) := \pi_0 \mathrm{filt}_0(E; \iota)^X_n. 
\end{align*}
In particular this means that an element in $\widehat{E}^n_\HS(X; \iota)$ is represented by a {\it differential function} $(c, h,  \omega) \colon X \to (E_n; \iota_n)$, consisting of a continuous map $c \colon X \to E_n$, a closed form $\omega \in \Omega_{\mathrm{clo}}^n(X; V^\bullet)$ and a singular cochain $h \in C^{n-1}(X; V^\bullet)$ such that $\delta h = c^* \iota_n - \omega$ as smooth singular cocycles. 

A particularly important case is when $V = V_E^\bullet$ and $\iota_E \in Z^0(E; V_E^\bullet)$ is the {\it fundamental cocycle}, i.e., a singular cocycle representing the Chern-Dold character of $E$. 
In this case the associated differential cohomology groups $\widehat{E}^n_\HS(X; \iota_E)$ satisfies the axioms of differential cohomology theory in \cite{BunkeSchick2010}. 
The isomorphism class of the resulting group is independent of the choice of the fundamental cocycle $\iota_E$, with an isomorphism given by a cochain cobounding the difference. 

In \cite[Section 4.10]{HopkinsSinger2005}, a differential pushforward is defined simply as maps of differential function spaces\footnote{
This point is important in the proof of Proposition \ref{prop_genus_functor}, which is the main ingredient of the proof of the main result (Theorem \ref{thm_genus_theory_2}). 
This is the reason why we want to use the Hopkins-Singer's formulation. 
\label{footnote_why_HS}
}, 
\begin{align}\label{eq_hat_mathcalG}
    \widehat{\mathcal{G}} \colon \left(MTG_{-r} \wedge (E_{n})^+; V_{\mathcal{G}}(\iota_{MTG})_{-r} \cup (\iota_E)_n \right) \to (E; \iota_E)_{n-r}, 
\end{align}
refining the map $MTG \wedge (E_{n})^+ \xrightarrow{\mathcal{G} \wedge \mathrm{id}}E \wedge (E_{n})^+ \xrightarrow{\mathrm{multi}} \Sigma^n E$. 
Here we are taking $V = V_E^\bullet$, and the cocycle $V_{\mathcal{G}}(\iota_{MTG}) \in Z^0(MTG; V_E^\bullet)$ is obtained by applying $V_{\mathcal{G}} \colon V_{MTG} \to V_E$ on the coefficient of $\iota_{MTG}$. 
Then\footnote{
As we explain in Appendix \ref{app_subsec_HS}, this process needs some additional choices of cochains. 
By the assumption that $E$ is rationally even, the resulting map on the differential cohomology level does not depend on the choices.  
\label{footnote_hoge}
}, the map $\widehat{\mathcal{G}}$ associates to every proper neat map of $p \colon N \to X$ of relative dimension $r$ with a {\it differential (tangential) $BG$-orientation} $g_p^\HS$ with a map
\begin{align}\label{eq_diff_push_HS}
    (p, g_p^\HS)_* \colon \widehat{E}^{*}_\HS (N; \iota_E) \to  \widehat{E}^{*-r}_\HS (X; \iota_E), 
\end{align}
called the {\it differential pushforward map}. 

\begin{rem}
As we explain in Appendix \ref{app_subsec_HS} and \ref{app_subsec_tangential}, the definition of (the tangential version of) differential $BG$-oriented maps in \cite{HopkinsSinger2005} differs from the differential stable relative $G$-structure in \cite[Definition 5.12]{YamashitaYonekura2021}. 
Fix a fundamental cocycle $\iota_{MTG} \in Z^0(MTG; V_{MTG}^\bullet)$. 
Given a proper smooth map $p \colon N \to X$, a topological tangential $BG$-orientation consists of a choice of embedding $N \hookrightarrow \R^k \times X$ for some $k$, a tubular neighborhood $W$ of $N$ in $\R^k \times X$ with a vector bundle structure $W \to N$, and a classifying map $\overline{W} := \Thom(W) \to MTG_{k-r}$. 
A differential tangential $BG$-orientation $g_p^\HS$ consists of its lift to a differential function
\begin{align}\label{eq_HS_orientation}
    t({g_p^\HS}) = (c, h, \omega) \colon \overline{W} \to (MTG_{k-r}, (\iota_{MTG})_{k-r}), 
\end{align}
Then the map \eqref{eq_diff_push_HS} is given by \eqref{eq_hat_mathcalG} and the Pontryagin-Thom construction. 
The resulting pushforward maps depend on the various choices. 
\end{rem}

\begin{rem}
However, using the assumption that $E$ is rationally even, in the case where $p$ is a submersion the situation is simple.  
First of all, the relative tangent bundle $T(p) = \ker(TN \to TX)$ makes sense, and we restrict our attention to the case where we are given a differential stable $G$-structure $g_p$ on $T(p)$ (as opposed to the more general notion of {\it differential stable relative tangential $G$-structure on $p$} in \cite[Definition 5.12]{YamashitaYonekura2021}). 
Then, associated to such $g_p$ there is a canonical set of choices of $g_q^\HS$ which gives the same map \eqref{eq_diff_push_HS}. 
We explain this point in details in Appendix \ref{app_diff_push}. 
We call such $g_p^\HS$ a {\it lift} of $g_p$ (Definition \ref{def_lift_HS_ori_tangential}). 
The map \eqref{eq_diff_push_HS} defined by any choice of a lift $g_p^\HS$ of $g_p$ is the unique map denoted by
\begin{align}\label{eq_diff_push_HS_preliminary}
    (p, g_p)_* := (p, g_p^\HS)_* \colon \widehat{E}^{*}_\HS (N; \iota_E) \to  \widehat{E}^{*-r}_\HS (X; \iota_E). 
\end{align}
We simply call it the {differential pushforward map} (Definition \ref{def_diff_push_tangential} and Proposition \ref{prop_push_HS=general_tangential}). 
\end{rem}

In the case where $p \colon N \to X$ is a submersion and equipped with a differential stable $G$-structure $g_p$ on $T(p)$, there is also the corresponding pushforward map on the level of differential forms. 
The Chern-Dold character \eqref{eq_chd_E} of the multiplicative genus $\mathcal{G} \in E^0(MTG)$ is the element
\begin{align}
    \mathrm{ch}(\mathcal{G}) \in H^0(MTG; V_E^\bullet). 
\end{align}
For example, for $\mathcal{G} = \tau \colon MT\mathrm{SO} \to H\Z$, the Chern-Dold chacacter is trivial, $1$. 
For $\mathcal{G} =\mathrm{ABS} \colon MT\mathrm{Spin}^c \to {K}$ and $\mathcal{G} =\mathrm{ABS} \colon MT\mathrm{Spin} \to {K}$, the Chern-Dold characters are the Todd polynomial and the $\widehat{A}$ polynomial, respectively. 
Applying the Chern-Weil construction in \eqref{eq_def_cw_hom_part2}, we get the Chern-Dold character form for the relative tangent bundle, 
\begin{align}
    \cw_{g_p}(\mathrm{ch}(\mathcal{G})) \in \Omega_{\mathrm{clo}}^0(N; \Ori(T(p)) \otimes_\R V_E^\bullet). 
\end{align}
Using this, the pushforward map on $\Omega^*(-; V_E^\bullet)$ is given by
\begin{align}\label{eq_push_form}
    \int_{N / X}-\wedge \cw_{g_p}(\mathrm{ch}(\mathcal{G}))  \colon \Omega^n(N; V_{E}^\bullet)  \to \Omega^{n-r}(X; V_{E}^\bullet). 
\end{align}
Restricted to the closed forms, the induced homomorphism on the cohomology, $H^n(N; V_{E}^\bullet) \to H^{n-r}(X; V_{E}^\bullet)$, is compatible with the Chern-Dold character \eqref{eq_chd_E} for $E$ and the topological pushforward \eqref{eq_top_push}. 
The differential pushforward map in \eqref{eq_diff_push_HS_preliminary} is compatible with the map \eqref{eq_push_form} (tangential version of \eqref{diag_diff_push_app}).

In particular, if $X = \pt$, for every $n$-dimensional differential stable tangential $G$-cycle $(M, g)$ over $\pt$, the differential pushforward map \eqref{eq_diff_push_HS_preliminary} is 
\begin{align}\label{eq_diff_push_pt_preliminary}
    (p_M, g)_*  \colon \widehat{E}^{*}_\HS (M; \iota_E) \to  \widehat{E}^{*-n}_\HS (\pt; \iota_E). 
\end{align}

As we explain in the last part of Appendix \ref{app_subsec_normal}, an important property of the pushforward is the following {\it Bordism formula}, relating the pushforward of differential forms \eqref{eq_push_form} on the bulk and the differential pushforward \eqref{eq_diff_push_pt_preliminary} on the boundary. 

\begin{fact}[{Bordism formula, \cite[Problem 4.235]{Bunke2013}}]\label{fact_bordism}
 For any morphism $[W, g_W] \colon (M_-, g_-) \to (M_+, g_+)$ in $\hBord_{n-1}(\pt)$, 
the following diagram commutes. 
\begin{align*}
    \xymatrix{
    \widehat{E}_\HS^*(W; \iota_E) \ar[r]^R \ar[d]^{(-i^*_{M_-}) \oplus i^*_{M_+}} &\Omega^*(W; V_E^\bullet)\ar[rr]^-{{\int_W -\wedge \cw_{g}(\mathrm{ch}(\mathcal{G}))}} & &\Omega^{*-n}(\pt; V_E^\bullet) \ar[d]^a\\
    \widehat{E}_\HS^*(M_-; \iota_E) \oplus \widehat{E}_\HS^*(M_+; \iota_E)\ar[rrr]^{(p_{M_-}, g_-)_* \oplus (p_{M_+}, g_+)_*} &&& \widehat{E}^{*-n+1}(\pt)
    }. 
\end{align*}

\end{fact}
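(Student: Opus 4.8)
The plan is to deduce the Bordism formula from the general Stokes-type formula for differential integration along a proper submersion with boundary, specialized to the fibration $p_W\colon W\to\pt$; this is \cite[Problem 4.235]{Bunke2013}. Given a morphism $[W,g_W]\colon (M_-,g_-)\to(M_+,g_+)$ in $\hBord_{n-1}(\pt)$, the manifold $W$ is a compact manifold with corners with $\partial W = M_+\sqcup(-M_-)$, and the differential stable tangential $G$-structure $g_W$ on $T(p_W)=TW$ restricts along the boundary to $g_+$ on $M_+$ and to the orientation-reversal of $g_-$ on $M_-$; this is exactly the notion of bordism of differential stable tangential $G$-cycles recalled in Section~\ref{sec_preliminary_2} (following \cite[Definition 3.8]{YamashitaYonekura2021}), and it is what produces the sign in $(-i^*_{M_-})\oplus i^*_{M_+}$. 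With these identifications the commuting square in the statement is precisely the conclusion of \cite[Problem 4.235]{Bunke2013} for $W$: the bottom horizontal map becomes $(p_{M_+},g_+)_*\oplus\bigl(-(p_{M_-},g_-)_*\bigr)$ after unfolding the sign, and the form-level integration kernel is $\cw_{g_W}(\mathrm{ch}(\mathcal{G}))$ by the compatibility of the differential pushforward \eqref{eq_diff_push_HS_preliminary} with the form-level pushforward \eqref{eq_push_form} established in Appendix~\ref{app_diff_push}.

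Two points must be checked before invoking \cite[Problem 4.235]{Bunke2013}. First, that the Hopkins--Singer differential pushforwards $(p_{M_\pm},g_\pm)_*$ used here agree with Bunke's differential integration: since $E$ is rationally even, $\widehat{E}_\HS$ carries the canonical multiplicative structure of \cite{Upmeier2015} and is therefore isomorphic, as a multiplicative differential extension, to any other such extension by \cite[Theorem 1.7]{BunkeSchick2010} (see Footnote~\ref{footnote_uniqueness}); moreover, by the discussion in Appendix~\ref{app_diff_push}, once the form-level pushforward \eqref{eq_push_form} is fixed, the differential refinement compatible with the curvature map $R$ and with the topological pushforward \eqref{eq_top_push} is exactly the one we use (Proposition~\ref{prop_push_HS=general_tangential}). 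Second, that the boundary data induced by $g_W$ corresponds to the differential $BG$-orientation data Bunke puts on the boundary; this is the translation between the two formulations of differential $G$-orientations carried out in Appendix~\ref{app_diff_push} (Definition~\ref{def_lift_HS_ori_tangential}, Proposition~\ref{prop_push_HS=general_tangential}), applied to the boundary fibration $\partial W\to\pt$. As a consistency check, applying $R$ to both composites reduces the claim to the identity on forms
\[
  \int_{\partial W} R(\widehat{e})\wedge\cw_{g_W}(\mathrm{ch}(\mathcal{G})) = \int_W d\bigl(R(\widehat{e})\wedge\cw_{g_W}(\mathrm{ch}(\mathcal{G}))\bigr) = 0,
\]
which holds since both factors are closed (and since $d$ vanishes on $\pt$, so that $R\circ a = 0$ there); thus $R$ kills the difference of the two composites, and the actual content of the Bordism formula is that the resulting flat class in $\widehat{E}^{*-n+1}(\pt)$ vanishes, which is precisely what the construction of the pushforward over $W$ supplies.

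For a self-contained derivation one can instead work directly with the Hopkins--Singer differential function complexes. The pushforward \eqref{eq_diff_push_HS_preliminary} from a closed manifold is computed by composing the Pontryagin--Thom collapse with the map of differential function spaces \eqref{eq_hat_mathcalG}; for the bordism $W$, an embedding $W\hookrightarrow\R^N\times[0,1]$ sending $\partial W$ into the two endpoints, together with a choice of collar, yields a differential function parametrized by $[0,1]$ interpolating between the data computing $(p_{M_-},g_-)_*(i^*_{M_-}\widehat{e})$ and that computing $(p_{M_+},g_+)_*(i^*_{M_+}\widehat{e})$. This interpolation is a path in $\mathrm{filt}_0$ only up to a curvature correction, and the standard homotopy formula in the Hopkins--Singer model expresses the difference of the endpoints as $a$ applied to the integral over $[0,1]$ of the curvature of this differential function; identifying that curvature integral with $\int_W R(\widehat{e})\wedge\cw_{g_W}(\mathrm{ch}(\mathcal{G}))$, using the Chern--Weil description of the $\widehat{\mathcal{G}}$-curvature, yields the formula.

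I expect the main obstacle to be bookkeeping rather than anything conceptual: fixing the orientation and $G$-structure conventions on $\partial W$ (the appearance of $-M_-$, hence the sign in $(-i^*_{M_-})\oplus i^*_{M_+}$) consistently with \cite{YamashitaYonekura2021}; carrying through the translations between tangential and normal differential $G$-orientations and between the Hopkins--Singer and \cite{YamashitaYonekura2021} conventions from Appendix~\ref{app_diff_push}; and, in the self-contained route, handling the corner structure of $W$ and the placement of collars so that the interpolating differential function is well-defined. None of this changes the final identity, but all of it has to be pinned down for the square to commute on the nose.
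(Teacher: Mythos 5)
Your proposal is correct and takes essentially the same route as the paper: the paper records this as a Fact citing \cite[Problem 4.235]{Bunke2013}, and its own sketch at the end of Appendix \ref{app_subsec_normal} is precisely your ``self-contained'' derivation --- extend the pushforward to the boundary-defining map $W \to I$ (a proper non-submersion) and apply the homotopy formula of \cite[Lemma 1]{BunkeSchick2010}, with the translation between Hopkins--Singer differential $BG$-orientations and the differential stable $G$-structures of \cite{YamashitaYonekura2021} supplied by Propositions \ref{prop_push_HS=general_normal} and \ref{prop_push_HS=general_tangential}. Your curvature consistency check and the identification of the interpolating curvature integral with $\int_W R(\widehat{e})\wedge\cw_{g_W}(\mathrm{ch}(\mathcal{G}))$ agree with that sketch, so nothing further is needed.
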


\begin{ex}
In the case $\mathcal{G} = \tau \colon MT\mathrm{SO} \to H\Z$, the nontrivial degree of pushforwards $(p_M, g)_*\colon \widehat{H}^{\dim M+1}(M; \Z) \to \widehat{H}^1(\pt; \Z) \simeq \R/\Z$ are called the {\it higher holonomy function} which appears in the definition of Chern-Simons invariants. 
In terms of the Cheeger-Simons model of $\widehat{H\Z}$ in terms of differential characters \cite{CheegerSimonsDiffChar}, it is given by the evaluation on the fundamental cycle. 
In particular for the case $\dim M = 1$ it is the usual holonomy, and the Bordism formula is satisfied because of the relation between the curvature and the holonomy for $U(1)$-connections. 
\end{ex}

\begin{ex}
In the case $\mathcal{G}=\mathrm{ABS} \colon MT\mathrm{Spin}^c \to {K}$,
Freed and Lott \cite{FL2010} constructed a model of $\widehat{K}$ in terms of hermitian vector bundles with hermitian connections, and the refinement of pushforwards when $\dim M$ is odd, $(p_M, g)_* \colon \widehat{K}^0(M) \to \widehat{K}^{-\dim M}(\pt) \simeq \R/\Z$, is given by the reduced eta invariants. 
The Bordism formula is a consequence of the Atiyah-Patodi-Singer index theorem. 
\end{ex}

\subsection{Differential Pushforwards in terms of functors}\label{subsec_genus_functor}

As a preparation to the main Subsection \ref{subsec_genus_main}, in this subsection we translate the data of differential pushforwards into functors from $\hBord_-(-)$. 

\begin{defn}\label{def_genus_functor}
In the above settings, let $X$ be a manifold, $k$ be an integer and $\widehat{e} \in \widehat{E}_\HS^k(X; \iota_E)$.  
Let $n$ be an integer with $k + n - 1 \ge 0$. 
Then define the functor of Picard groupoids,
\begin{align}\label{eq_genus_functor}
    T_{{\mathcal{G}}, \widehat{e}} \colon \hBord_{k+n-1}(X) \to \left(
    V_E^{-n} \xrightarrow{a} \widehat{E}_\HS^{1-n}(\pt; \iota_E)
    \right),
\end{align}
by the following. 
\begin{itemize}
    \item On objects, we set 
    \begin{align}\label{eq_def_T_Ge}
        T_{{\mathcal{G}}, \widehat{e}}(M, g, f) := (p_{M}, g)_* f^*(\widehat{e}) \in \widehat{E}_\HS^{1-n}(\pt; \iota_E)
    \end{align}
    \item On morphisms, we set 
    \begin{align*}
        T_{{\mathcal{G}}, \widehat{e}}([W, g_W, f_W]):=  \cw( R(\widehat{e}) \wedge \mathrm{ch}(\mathcal{G}))([W, g_W, f_W]). 
    \end{align*}
    Here $R(\widehat{e}) \in \Omega_{\mathrm{clo}}^k(X; V_E^\bullet)$ is the curvature of $\widehat{e}$ and we use \eqref{eq_def_cw_omega_mor_part2}. 
\end{itemize}
The well-definedness of the functor follows by the Bordism formula in Fact \ref{fact_bordism}. 
\end{defn}

As is easily shown by the Bordism formula, the formula \eqref{eq_def_T_Ge} defines the homomorphism 
\begin{align}\label{eq_T_Ge_hom}
    T_{{\mathcal{G}}, \widehat{e}} \colon \mathcal{C}^{G_\nabla}_{k+n-1}(X) \to \widehat{E}_\HS^{1-n}(\pt; \iota_E). 
\end{align}

As expected, the transformation \eqref{eq_genus_functor} is induced by the first arrow in \eqref{eq_genus_theory_2}. 
To show this, 
first remark that for any spectrum $F$ and its any fundamental cycle $\iota_F$, the forgetful functor gives the equivalence of Picard groupoids, 
\begin{align}\label{eq_proof_genus_functor_1}
    \pi_{\le 1}((F; \iota_F)^\pt_n) \simeq \pi_{\le 1}(F_n), 
\end{align}
where the left hand side means the simplicial fundamental groupoid, whose objects are differential functions $t_\pt \colon \pt \to (F; \iota_F)_n$, and morphisms are bordism classes of differential functions $t_{I} \colon I \to (F; \iota_F)_n$. 
The right hand side is the fundamental groupoid for the space $F_n$, which is equipped with the structure of a Picard groupoid by \cite[Example B.7]{HopkinsSinger2005}. 

We have a functor of Picard groupoids\footnote{The right hand side is the Picard groupoid associated to the homomorphism $a \colon V_E^{-n} \to \widehat{E}_\HS^{1-n}(\pt; \iota_E)$ of abelian groups. 
In general, a homomorphism $\del \colon A \to B$ between abelian groups associates a Picard groupoid $(A \xrightarrow{\del} B)$, whose objects are elements of $B$, and a morphism from $b$ to $b'$ is given by an element $a \in A$ such that $b' - b = \del(a)$. },
\begin{align}\label{eq_genus_functor_E_pt_hat}
   \mathrm{ev} \colon \pi_{\le 1}\left((E; \iota_E)^\pt_{1-n} \right) \to \left(
    V_E^{-n} \xrightarrow{a} \widehat{E}_\HS^{1-n}(\pt; \iota_E)
    \right) ,
\end{align}
given by assigning the element $[t_\pt] \in \widehat{E}_\HS^{1-n}(\pt; \iota_E)$ for an object and the integration of the curvature $R([t_{I}]) \in \Omega_{\mathrm{clo}}^{1-n}(I; V_E^\bullet)$ for a morphism.

\begin{prop}\label{prop_genus_functor}
The functor \eqref{eq_genus_functor} of Picard groupoids is naturally isomorphic to the following composition, 
\begin{align}\label{eq_prop_genus_functor}
    \hBord_{k+n-1}(X) \simeq \pi_{\le 1}(L( X^+\wedge MTG)_{1-k-n})
    \xrightarrow{e \wedge \mathcal{G}} \pi_{\le 1}(E_{1-n}) \to  \left(
    V_E^{-n} \xrightarrow{a} \widehat{E}_\HS^{1-n}(\pt; \iota_E)
    \right), 
\end{align}
where the first arrow is the equivalence in \eqref{eq_lem_cat_equivalence_part2}, and the last arrow is the composition of \eqref{eq_proof_genus_functor_1} and \eqref{eq_genus_functor_E_pt_hat}. 
\end{prop}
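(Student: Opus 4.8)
The plan is to reduce both functors to the same Pontryagin--Thom data and to compare them term by term; the only substantive point is that the auxiliary choices entering the Hopkins--Singer differential pushforward are harmless because $E$ is rationally even. First I would make the right-hand side of \eqref{eq_prop_genus_functor} explicit. Under \eqref{eq_lem_cat_equivalence_part2}, after fixing an embedding $M \hookrightarrow \R^N$ with $N$ large and a tubular neighbourhood $W \to M$, an object $(M,g,f)$ of $\hBord_{k+n-1}(X)$ is presented by the Thom collapse $S^N \to \overline W = \Thom(W)$ followed by the classifying map $\overline W \to (X^+ \wedge MTG)_{N-r}$ (where $r := k+n-1$) assembled from $f$ and the Thom class of the stable normal $G$-structure on $W$ induced by $g$; a morphism $[W,g_W,f_W]$ is presented by the analogous homotopy coming from a bordism. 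Applying $e\wedge\mathcal G$ followed by the multiplication of $E$ sends this to $\pi_{\le 1}(E_{1-n})$, and \eqref{eq_proof_genus_functor_1} followed by $\mathrm{ev}$ of \eqref{eq_genus_functor_E_pt_hat} records, on objects, the class of the resulting differential function in $\widehat E_\HS^{1-n}(\pt;\iota_E)$ and, on morphisms, the integral over $I$ of its curvature in $V_E^{-n}$.

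Next I would match this with $T_{\mathcal G,\widehat e}$ on objects. By Appendix \ref{app_diff_push} (Definition \ref{def_lift_HS_ori_tangential} and Proposition \ref{prop_push_HS=general_tangential}), the differential pushforward $(p_M,g)_*$ of \eqref{eq_diff_push_pt_preliminary} is computed, through \eqref{eq_hat_mathcalG} and \eqref{eq_HS_orientation}, from exactly the above Thom collapse, smashed with a differential function representing $f^*\widehat e$, post-composed with $\widehat{\mathcal G}$, and then Pontryagin--Thom'd; and a lift $g_p^\HS$ of $g$ is the differential function $(c,h,\omega)\colon \overline W \to (MTG_{N-r},(\iota_{MTG})_{N-r})$ whose underlying map $c$ is the classifying map above and whose form part is $\cw_g$ applied to the Chern--Dold character of $\mathcal G$. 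Since $\widehat{\mathcal G}$ refines $\mathcal G \wedge \mathrm{id}$ followed by the multiplication of $E$, the differential function produced on $S^N$ is, up to the cochain ambiguities addressed below, the one from the previous paragraph; taking $\Omega^N$ and applying \eqref{eq_proof_genus_functor_1} then gives that $\mathrm{ev}$ of that data equals $(p_M,g)_* f^*(\widehat e) = T_{\mathcal G,\widehat e}(M,g,f)$ of \eqref{eq_def_T_Ge}.

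For morphisms, the curvature of the differential function $\widehat{\mathcal G}$ is the Chern--Dold character $\mathrm{ch}(\mathcal G)$, so $\mathrm{ev}$ of the data on a morphism $[W,g_W,f_W]$ is the integral over $I$ of the pullback of the curvature form; by the compatibility of the Hopkins--Singer differential pushforward with the form-level pushforward \eqref{eq_push_form} (the tangential version of \eqref{diag_diff_push_app}) this equals $\int_W \cw_{g_W} f_W^*\big(R(\widehat e)\wedge \mathrm{ch}(\mathcal G)\big)$, which by \eqref{eq_def_cw_omega_mor_part2} is $\cw(R(\widehat e)\wedge \mathrm{ch}(\mathcal G))([W,g_W,f_W]) = T_{\mathcal G,\widehat e}([W,g_W,f_W])$ as in Definition \ref{def_genus_functor}. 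Thus the two functors agree on morphisms as well; alternatively this morphism-level identity is the bulk analogue of the Bordism formula, Fact \ref{fact_bordism}.

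Finally I would package the choices into the natural isomorphism. The constructions of $\widehat{\mathcal G}$, of the collapse data, and of a lift $g_p^\HS$ depend on the embedding, the tubular neighbourhood, and cochains cobounding the differences in \eqref{eq_hat_mathcalG}; changing these alters \eqref{eq_diff_push_HS_preliminary} only by a canonical isomorphism, and rational evenness of $E$ kills the relevant indeterminacy, so $T_{\mathcal G,\widehat e}$ is well defined and all the presentations above are canonically identified. The same stabilisation and embedding choices make \eqref{eq_lem_cat_equivalence_part2} an equivalence only up to canonical isomorphism, and the two systems of choices are compatible, so collecting these identifications yields a natural isomorphism of the two functors, coherent in $(M,g,f)$. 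I expect the real work to be precisely here --- tracking the cochain-level ambiguities through \eqref{eq_hat_mathcalG} and through the Pontryagin--Thom equivalence, and verifying that rational evenness renders all of them irrelevant --- whereas, once the Thom-collapse data are matched up, the object- and morphism-level identities are essentially forced by the definitions.
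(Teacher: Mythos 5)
Your proposal is correct and follows essentially the same route as the paper's proof: both reduce the two functors to the same Pontryagin--Thom/differential-function data, use a lift $g^\HS$ of $g$ together with Proposition \ref{prop_push_HS=general_tangential} to identify the differential pushforward with the Hopkins--Singer construction via $\widehat{\mathcal{G}}$, handle morphisms through the curvature/bordism compatibility, and conclude by noting that $t(\widehat e)$ and $\widehat{\mathcal{G}}$ refine $e$ and $\mathcal{G}$ while rational evenness (and uniqueness of choices up to bordism) absorbs the auxiliary cochain choices. The paper merely packages your "term-by-term comparison" as an explicit intermediate functor \eqref{eq_proof_genus_functor_2} into $\pi_{\le 1}\bigl(((E_k)^+\wedge MTG_{1-k-n};\ \cdot\ )^{\pt}\bigr)$ through which $T_{\mathcal{G},\widehat e}$ factors, but the content is the same.
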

\begin{proof}

Choose a differential function $t(\widehat{e})\colon X \to (E_k; (\iota_E)_k)$ representing $\widehat{e}$. 
For each object $(M, g, f)$ in $\hBord_{k+n-1}(X)$, choose a Hopkins-Singer's differential $G$-structure $g^\HS$ lifting $g$. 
By the discussion in Appendix \ref{app_subsec_HS} and its tangential variant in Appendix \ref{app_subsec_tangential}, we get a functor
\begin{align}\label{eq_proof_genus_functor_2}
    \hBord_{k+n-1}(X) \to \pi_{\le 1}((E_k)^+ \wedge MTG_{1-k-n}; (\iota_E)_k \cup V_{\mathcal{G}}(\iota_{MTG})_{1-k-n})^\pt). 
\end{align}
Indeed, for objects, given $(M, g, f)$ with the chosen lift $g^\HS$, denote the underlying embedding and tubular neighborhood by $M \subset U \subset \R^N$. 
We have differential functions $f^* t(\widehat{e}) \colon M \to (E; \iota_E)_k$ and $t({g^\HS}) \colon \overline{U} \to (MTG; \iota_{MTG})_{N-(k+n-1)}$. 
Applying the ($MTG$-version of the) left vertical arrow of \eqref{diag_HS_multi} to them and using the open embedding $U \hookrightarrow \R^N$ (the Pontryagin-Thom collapse), we get the differential function $\pt \to (E_k \wedge MTG_{1-k-n}; (\iota_E)_k \cup V_{\mathcal{G}}(\iota_{MTG})_{1-k-n})$. 

For morphisms $[W, g_W, f_W] \colon (M_-, g_-, f_-) \to (M_+, g_+, f_+) $, choose any representative $(W, g_W, f_W)$ and smooth map $p_W \colon W \to I$ (not necessarily a submersion) so that it coincides with a collar coordinates of each objects $(M_\pm, g_\pm, f_\pm)$ near the endpoints, respectively. 
The structure $g_W$ induces $g_{p_W}$, in particular the topological structure $g_{p_W}^{\mathrm{top}}$, on $p_W$. 
Take any Hopkins-Singer's differential tangential $BG$-oorientation $g_{p_W}^{\HS}$ (Appendix \ref{app_subsec_tangential}) for $p_W$ which coincides with the chosen lifts at the boundary, and whose underlying map classifies $g_{p_W}^{\mathrm{top}}$. 
Then applying the same procedure as that we did for objects above, we get a differential function $I \to (E_k \wedge MTG_{1-k-n}; (\iota_E)_k \cup V_{\mathcal{G}}(\iota_{MTG})_{1-k-n})$ which restricts at the boundary to the ones assigned to objects above. 
Since any of the choices we have made is unique up to bordisms, the resulting morphism in the right hand side of \eqref{eq_proof_genus_functor_2} is uniquely determined. 
This gives the desired functor. 


By Definition \ref{def_genus_functor} and Proposition \ref{prop_push_HS=general_tangential}, the functor $T_{{\mathcal{G}}, \widehat{e}}$ coincides with the composition of \eqref{eq_proof_genus_functor_2} with
\begin{align}
     \pi_{\le 1}\left( \left( E_k \wedge MTG_{1-k-n} ;  (\iota_E)_k \cup V_{\mathcal{G}}(\iota_{MTG})_{1-k-n} \right)^\pt\right) \xrightarrow{\widehat{\mathcal{G}} } \pi_{\le 1}((E; \iota_E)^\pt_{1-n}) \\
     \xrightarrow{\mathrm{ev}} \left(
    V_E^{-n} \xrightarrow{a} \widehat{E}_\HS^{1-n}(\pt; \iota_E)
    \right).  \notag
\end{align}
The fact that it is naturally isomorphic to \eqref{eq_prop_genus_functor} is just the cosequence of the fact that $\widehat{e}$ and $\widehat{\mathcal{G}}$ are refinements of $e$ and $\mathcal{G}$, respectively. 
This completes the proof. 
\end{proof}

\subsection{The construction and the proof}\label{subsec_genus_main}
In this main subsection, we state and prove the main result of this article. 
Let $G$ be a multiplicative tangential structure groups, $E$ be a rationally even ring spectrum and $\mathcal{G} \colon MTG \to E$ be a homomorphism of ring spectra. 
Fix an integer $n$ such that $E^{1-n}(\pt) \otimes \R = 0$.  
We construct a natural transformation
\begin{align}\label{eq_goal_theory_2}
    \Phi_{\mathcal{G}} \colon \widehat{E}_{\mathrm{HS}}^*(-; \iota_E) \otimes IE^n(\pt) \to (\widehat{I\Omega^G_{\mathrm{dR}}})^{*+n}(-), 
\end{align}
on $\mathrm{Mfd}^{\mathrm{op}}$ (Definition \ref{def_theory}).

The main result of this paper is the following topological characterization of the transformation \eqref{eq_goal_theory_2}. 
\begin{thm}[{=Theorem \ref{thm_genus_theory}}]\label{thm_genus_theory_2}
In the above settings, 
let $X$ be a manifold and $k$ be an integer. 
For $\widehat{e} \in \widehat{E}_\HS^k(X; \iota_E)$ and $\beta \in IE^n(\pt)$, the element $I(\Phi_{\mathcal{G}}(\widehat{e} \otimes \beta)) \in (I\Omega^G)^{k+n}(X) = [X^+\wedge MTG ,  \Sigma^{k+n}I\Z]$ coincides with the following composition, 
\begin{align}\label{eq_genus_theory_2}
    X^+\wedge MTG \xrightarrow{e \wedge \mathcal{G}} 
    \Sigma^k E \wedge E \xrightarrow{\mathrm{multi}}
    \Sigma^k E \xrightarrow{\beta} \Sigma^{k+n}I\Z. 
\end{align}
Here we denoted $e := I(\widehat{e}) \in E^k(X)$. 
\end{thm}

As a preparation, we show that there exists a canonical homomorphism\footnote{
The existsnce of a canonical pairing $IE^{-n}(\pt) \otimes \widehat{E}^{1-n}_\HS(\pt; \iota_E) \to \R/\Z$ is used in \cite[Proposition 6]{FMS07}, in particular in the last arrow of the second line of the proof of that proposition. 
They do not state any condition on $E$, but they use the assumption $V_E^{1-n}=0$ implicitely. 
}
\begin{align}\label{eq_Moore}
    s \colon IE^n(\pt) \to \mathrm{Hom}_{\mathrm{Ch}}\left(
    \left(
    V_E^{-n} \xrightarrow{a} \widehat{E}_\HS^{1-n}(\pt; \iota_E)
    \right) , (\R \to \R/\Z) 
    \right) ,
\end{align}
where $\Hom_{\mathrm{Ch}}$ denotes the group of chain maps of complexes of abelian groups. 
Indeed, by \cite[(4.57)]{HopkinsSinger2005}, we have a canonical isomorphism\footnote{
This isomorphism does not follow from the axiom of differential cohomology theory in \cite{BunkeSchick2010}. 
For more on this point, see \cite[Section 5]{BunkeSchick2010}. 
}
\begin{align}\label{eq_HS_flat_general}
    \ker\left(R \colon \widehat{E}^{*}_\HS(-; \iota_E) \to \Omega^{*}_{\mathrm{clo}}(-; V_E^\bullet) \right) \simeq E^{*-1}(-; \R/\Z). 
\end{align}
Here, for any abelian group $\mathbb{G}$, the cohomology theory $E^*(-; \mathbb{G})$ is represented by the spectrum $E\mathbb{G} := E \wedge S\mathbb{G}$, where $S\mathbb{G}$ is the Moore spectrum.
As explained there, this is because the differential function complexes can be fits into the homotopy Cartesian square \cite[(4.12)]{HopkinsSinger2005}. 
Applied to $\pt$ and $*=1-n$, we get the identification
\begin{align}\label{eq_HS_flat}
    \widehat{E}^{1-n}_\HS(\pt; \iota_E) = \ker\left(R \colon \widehat{E}^{1-n}_\HS(\pt; \iota_E) \to V_E^{1-n} \right) \simeq E^{-n}(\pt; \R/\Z). 
\end{align}
An element $\beta \in IE^n(\pt) = [E, \Sigma^n I\Z]$ induces the element $\beta_{\mathbb{G}} \in [E\mathbb{G}, \Sigma^n I\Z \wedge S\mathbb{G}]$ for any $\mathbb{G}$, and using $I\Z \wedge S\R \simeq  H\R$ and $I\Z \wedge S\R/\Z \simeq I\R/\Z$, we get the induced homomorphisms on $\pt$, which we also denote as
\begin{align}
    \beta_{\R} &\colon V_E^{-n} = E^{-n}(\pt; \R) \to \R, \label{eq_beta_R} \\
    \beta_{\R/\Z} &\colon \widehat{E}^{1-n}_\HS(\pt; \iota_E) \xrightarrow[\eqref{eq_HS_flat}]{\simeq} E^{-n}(\pt; \R/\Z) \to \R/\Z,  \label{eq_beta_R/Z}
\end{align}
The homomorphism \eqref{eq_beta_R} coincides with the one obtained by the map $IE^n(X) \to \mathrm{Hom}(E_{n}(X), \R) $ in \eqref{eq_exact_IE_part2}. 
The homomorphism \eqref{eq_Moore} is given by mapping $\beta$ to the pair $(\beta_\R, \beta_{\R/\Z})$. 
The well-definedness follows by the construction.

On the other hand, by \cite[Corollary B.17]{HopkinsSinger2005}\footnote{
Note that there is an obvious typo of the degree in the statement of \cite[Corollary B.17]{HopkinsSinger2005}. 
} (also see \cite[Fact 2.6]{YamashitaYonekura2021} and the explanations there), we have an isomorphism for any spectra $E$, 
\begin{align}\label{eq_model_IZ_part2}
    IE^n(\pt) \simeq \pi_0\mathrm{Fun}_{\mathrm{Pic}}\left(\pi_{\le 1}(E_{1-n}), (\R \to \R/\Z) 
    \right), 
\end{align}
where $\pi_0 \mathrm{Fun}_{\mathrm{Pic}}$ means the group of natural isomorphism classes of functors of Picard groupoids. 
By \eqref{eq_proof_genus_functor_1}, \eqref{eq_genus_functor_E_pt_hat} and \eqref{eq_model_IZ_part2}, we get a homomorphism
\begin{align}\label{eq_ev_hom}
     \mathrm{ev}_* \colon \pi_0\mathrm{Fun}_{\mathrm{Pic}}\left( \left(
    V_E^{-n} \xrightarrow{a} \widehat{E}_\HS^{1-n}(\pt; \iota_E)
    \right), (\R \to \R/\Z) 
    \right) \to IE^n(\pt). 
\end{align}
It directly follows from the definition of the identification \eqref{eq_HS_flat_general} that we have
\begin{align}\label{eq_ev_s_id}
    \mathrm{id} = \mathrm{ev}_* \circ s \colon IE^n(\pt) \to IE^{n}(\pt). 
\end{align}

\begin{defn}[{$\Phi_{\mathcal{G}}$}]\label{def_theory}
In the settings explained in the beginning of this subsection, for each manifold $X$
we define a homomorphism of abelian groups
\begin{align}
    \Phi_{\mathcal{G}} \colon \widehat{E}_\HS^*(X; \iota_E) \otimes IE^n(\pt) \to (\widehat{I\Omega^G_{\mathrm{dR}}})^{*+n}(X), 
\end{align}
by the following. 
For $\widehat{e}\in  \widehat{E}_\HS^k(X; \iota_E)$ and $\beta \in IE^n(\pt)$, set
$\Phi_{\mathcal{G}}(\widehat{e}\otimes \beta) := (\beta_\R(R(\widehat{e}) \wedge \mathrm{ch}(\mathcal{G})), \beta_{\R/\Z} \circ T_{{\mathcal{G}}, \widehat{e}}) \in (\widehat{I\Omega^G_{\mathrm{dR}}})^{k+n}(X)$, where
\begin{itemize}
    \item The element $\beta_\R (R(\widehat{e}) \wedge \mathrm{ch}(\mathcal{G})) \in \Omega_{\mathrm{clo}}^{n+k}(X; N_{G}^\bullet)$ is obtained by applying \eqref{eq_beta_R} on the coefficient of $R(e) \wedge \mathrm{ch}(\mathcal{G}) \in \Omega_{\mathrm{clo}}^k(X; H^*(MTG; V_E^\bullet))$. 
    \item $\beta_{\R/\Z} \circ T_{{\mathcal{G}}, \widehat{e}}$ is the composition of \eqref{eq_T_Ge_hom} and \eqref{eq_beta_R/Z}.  
\end{itemize}
The fact that the pair $ (\beta_\R(R(\widehat{e}) \wedge \mathrm{ch}(\mathcal{G})), \beta_{\R/\Z}(T_{{\mathcal{G}}, \widehat{e}} ))$ satisfies the compatibility condition follows from the well-definedness of \eqref{eq_Moore} and the fact that $T_{\mathcal{G}, e}$ in Definition \ref{def_genus_functor} is a functor. 
\end{defn}

Now we prove Theorem \ref{thm_genus_theory_2}. 
\begin{proof}[Proof of Theorem \ref{thm_genus_theory_2}(=Theorem \ref{thm_genus_theory})]
We use the argument in \cite[Subsection 4.2]{YamashitaYonekura2021}.  
Recall that, for an element $(\omega, h) \in (\widehat{I\Omega^G_\dR})^N(X)$ we associated a functor $F_{(\omega, h)} \colon \hBord_{N-1}(X) \to (\R \to \R/\Z)$ in \cite[(4.46)]{YamashitaYonekura2021}. 
In the proof of \cite[Theorem 4.51]{YamashitaYonekura2021}, 
the natural isomorphism
\begin{align}\label{eq_isom_IOmega_dR_part2}
   I\Omega^G \simeq I\Omega^G_\dR,
\end{align}
where for the former we use the model of $I\Z$ by \cite[Corollary B.17]{HopkinsSinger2005}, was given as follows. 
Using the equivalence \eqref{eq_lem_cat_equivalence_part2}, we have $(I\Omega^G)^{N}(X) = \pi_0\mathrm{Fun}_{\mathrm{Pic}}\left(\pi_{\le 1}(\hBord_{N-1}(X) \to (\R \to \R/\Z)\right)$. 
The map \eqref{eq_isom_IOmega_dR_part2} is given by mapping the isomorphism class of the functor $F_{(\omega, h)}$ to $I(\omega, h) \in ({I\Omega^G_\dR})^N(X)$. 

Now fix $\widehat{e} \in \widehat{E}_\HS^k(X; \iota_E)$ and $\beta \in IE^n(\pt)$. 
By Definitions \ref{def_theory} and \ref{def_genus_functor}, the functor associated to $\Phi_{\mathcal{G}}(\widehat{e}\otimes \beta)$ coincides with the following composition. 
\begin{align}\label{eq_proof_genus_theory}
    F_{\Phi_{\mathcal{G}}(\widehat{e}\otimes \beta)} \colon \hBord_{k+n-1}(X) \xrightarrow{T_{{\mathcal{G}}, \widehat{e}}} \left(
    V_E^{-n} \xrightarrow{a} \widehat{E}_\HS^{1-n}(\pt; \iota_E)
    \right) \xrightarrow{s(\beta) = (\beta_\R, \beta_{\R/\Z})} (\R \to \R/\Z). 
\end{align}

Combining this with Proposition \ref{prop_genus_functor} and \eqref{eq_ev_s_id}, we see that, under the equivalence $\hBord_{k+n-1}(X) \simeq \pi_{\le 1}(L(X^+\wedge MTG)_{1-k - n})$, \eqref{eq_proof_genus_theory} coincides with
\begin{align*}
    \pi_{\le 1}(L(X^+\wedge MTG)_{1-k - n}) \xrightarrow{e \wedge \mathcal{G}} \pi_{\le 1}(E_{1-n}) \xrightarrow{\beta} (\R \to \R/\Z), 
\end{align*}
up to a natural isomorphism. 
This completes the proof of Theorem \ref{thm_genus_theory_2}. 
\end{proof}

\subsection{Examples}\label{subsec_genus_example}
\subsubsection{The holonomy theory (1) : \cite[Example 4.54]{YamashitaYonekura2021}}\label{subsec_hol_target}
Here we explain the easiest example of the ``Holonomy theory (1)'' which appeared in \cite[Example 4.54]{YamashitaYonekura2021}. 
This corresponds to the case $E = H\Z$, $\mathcal{G} = \tau \colon MT\mathrm{SO} \to H\Z$ is the usual orientation, and $n = 0$. 

Recall that, given a manifold $X$ and a hermitian line bundle with unitary connection $(L, \nabla)$ over $X$, we get the element
\begin{align}\label{Hol_class}
    (c_1(\nabla), \mathrm{Hol}_{\nabla}) \in (\widehat{I\Omega^{\mathrm{SO}}_{\mathrm{dR}}})^{2}(X). 
\end{align}
On the other hand, in the case $E = H\Z$ we have the canonical choice of an element in $IH\Z^0(\pt)$, namely the Anderson self-duality element $\gamma_H \in IH\Z^0(\pt)$. 
Thus we have the homomorphism
\begin{align*}
    \Phi_{\tau}(-\otimes \gamma_H) \colon \widehat{H}^2(X; \Z) \to (\widehat{I\Omega^{\mathrm{SO}}_{\mathrm{dR}}})^{2}(X). 
\end{align*}
Using the model of $\widehat{H\Z}^2$ in terms of hermitian vector bundles with $U(1)$-connections (for example see \cite[Example 2.7]{HopkinsSinger2005}), the pair $(L, \nabla)$ defines a class $[L, \nabla] \in \widehat{H}^2(X; \Z)$. 
We have the following. 
\begin{prop}\label{prop_hol}
We have the following equality in $(\widehat{I\Omega^{\mathrm{SO}}_{\mathrm{dR}}})^{2}(X)$, 
\begin{align}\label{eq_prop_hol}
    (c_1(\nabla), \mathrm{Hol}_{\nabla}) = \Phi_{\tau}([L, \nabla]\otimes \gamma_H). 
\end{align}
Moreover, the element $I(c_1(\nabla), \mathrm{Hol}_{\nabla}) \in ({I\Omega^{\mathrm{SO}}_{\mathrm{dR}}})^{2}(X)$ coincides with the following composition, 
\begin{align*}
    X^+ \wedge MT\mathrm{SO} \xrightarrow{ c_1(L) \wedge \tau} 
    \Sigma^2 H\Z\wedge H\Z  \xrightarrow{\mathrm{multi}}
    \Sigma^2 H\Z \xrightarrow{\gamma_H} \Sigma^{2}I\Z. 
\end{align*}
\end{prop}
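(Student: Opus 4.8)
The plan is to deduce both assertions from the general machinery already set up, so that no hands-on computation with holonomies or eta-type invariants is needed. For the first equality \eqref{eq_prop_hol}, I would argue that both sides of \eqref{eq_prop_hol} have the same curvature and the same holonomy homomorphism $h$, and then invoke the fact that an element of $(\widehat{I\Omega^{\mathrm{SO}}_{\mathrm{dR}}})^2(X)$ is determined by the pair $(\omega, h)$. The curvature side is immediate: $R([L,\nabla]) = c_1(\nabla)$ in $\Omega^2_{\mathrm{clo}}(X)$, the Chern--Dold character $\mathrm{ch}(\tau)$ is $1$ as recorded in Subsection \ref{subsec_HS_push}, and since $n=0$ the map $\beta_\R = (\gamma_H)_\R$ on $V_{H\Z}^0 = \R$ is the identity; hence $\Phi_\tau([L,\nabla]\otimes\gamma_H)$ has curvature $c_1(\nabla)\wedge 1 = c_1(\nabla)$, matching \eqref{Hol_class}. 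For the $h$-component, I would unwind Definition \ref{def_genus_functor}: on a $1$-dimensional differential stable tangential $\mathrm{SO}$-cycle $(M,g,f)$ the value is $(p_M, g)_* f^*[L,\nabla] \in \widehat{H\Z}^1(\pt;\iota_{H\Z}) \simeq \R/\Z$, followed by $(\gamma_H)_{\R/\Z}$, which under the identification \eqref{eq_HS_flat} is again essentially the identity $\R/\Z \to \R/\Z$. As recalled at the end of Subsection \ref{subsec_HS_push}, the differential pushforward of $\widehat{H\Z}$ along a $1$-manifold is, in the Cheeger--Simons picture, evaluation of the differential character on the fundamental cycle, i.e.\ precisely the holonomy $\mathrm{Hol}_{f^*\nabla}$ around $M$; this is exactly the definition of the homomorphism $\mathrm{Hol}_\nabla$ in \cite[Example 4.54]{YamashitaYonekura2021}. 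So the two $h$'s agree, giving \eqref{eq_prop_hol}.

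For the second assertion, I would simply apply Theorem \ref{thm_genus_theory} with $E = H\Z$, $\mathcal{G} = \tau$, $n = 0$, $k = 2$, $\widehat{e} = [L,\nabla]$, and $\beta = \gamma_H$. The theorem says $I(\Phi_\tau([L,\nabla]\otimes\gamma_H))$ is the composition
\[
X^+ \wedge MT\mathrm{SO} \xrightarrow{e\wedge\tau} \Sigma^2 H\Z \wedge H\Z \xrightarrow{\mathrm{multi}} \Sigma^2 H\Z \xrightarrow{\gamma_H} \Sigma^2 I\Z,
\]
where $e := I([L,\nabla]) \in H^2(X;\Z)$. The only remaining point is to identify $e$ with $c_1(L)$: this is the standard fact that the topological class underlying the Cheeger--Simons / Hopkins--Singer differential refinement of a line bundle with connection is its first Chern class. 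Combining this identification with the first part of the proposition then yields the displayed composition and completes the proof.

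The main obstacle I anticipate is the careful bookkeeping in identifying the differential pushforward $(p_M, g)_*$ on $\widehat{H\Z}^2$ with the classical holonomy function under the chain of isomorphisms \eqref{eq_HS_flat}, $\widehat{E}^{1-n}_\HS(\pt;\iota_E) \simeq E^{-n}(\pt;\R/\Z)$, together with $(\gamma_H)_{\R/\Z}$; in particular one must check that all these identifications are normalized compatibly so that no sign or orientation ambiguity creeps in. This is precisely the content flagged in Subsection \ref{subsec_HS_push} via the results of Freed--Lott and Klonoff in the $K$-theory case, and for $H\Z$ it is the (by now folklore) comparison between the Hopkins--Singer model and the Cheeger--Simons model \cite{CheegerSimonsDiffChar}; I would cite these rather than reprove them. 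Everything else is a direct application of Theorem \ref{thm_genus_theory} and the definitions.
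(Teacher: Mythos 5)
Your proposal is correct and follows essentially the same route as the paper: the paper likewise reduces the second assertion to Theorem \ref{thm_genus_theory} plus the equality \eqref{eq_prop_hol}, and establishes \eqref{eq_prop_hol} by comparing the two components of the pair $(\omega,h)$ using exactly the two facts you cite --- that $R([L,\nabla])$ is $c_1(\nabla)$ and that $(p_M,g)_*f^*[L,\nabla]$ is the holonomy $\mathrm{Hol}_{f^*\nabla}$ under the identification $\widehat{H}^1(\pt;\Z)\simeq\R/\Z$ induced by $\gamma_H$. Your extra remarks (the identification $I([L,\nabla])=c_1(L)$ and the normalization of \eqref{eq_HS_flat}) are left implicit in the paper but are the standard facts it is invoking.
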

\begin{proof}
The last statement follows from \eqref{eq_prop_hol} and Theorem \ref{thm_genus_theory_2}. 
The equality \eqref{eq_prop_hol} follows from 
the fact that the self-duality element $\gamma_H$ induces the canonical isomorphism $\widehat{H}^1(\pt; \Z) \simeq \R/\Z$ and $H^0(\pt; \Z) \simeq \R$,
together with the following well-known facts about $\widehat{H\Z}$ (for example see \cite[Section 2.4]{HopkinsSinger2005}). 
The element $[L, \nabla] \in \widehat{H}^2(X; \Z)$ satisfies
\begin{align*}
    \gamma_H \circ R([L, \nabla]) = c_1(\nabla)\in \Omega^2(X), 
    \end{align*}
and, given a map $f \colon M \to X$ from an oriented $1$-dimensional closed manifold $(M, g)$, the pushforward $(p_M, g)_* \colon \widehat{H}^2(M; \Z) \to \widehat{H}^1(\pt; \Z) \xrightarrow[\simeq]{\gamma_H} \R/\Z$
    \begin{align*}
    \gamma_H \circ (p_M, g)_* f^*[L, \nabla] = \mathrm{Hol}_{f^*\nabla}. 
\end{align*}
\end{proof}

\subsubsection{The classical Chern-Simons theory : \cite[Example 4.56]{YamashitaYonekura2021}}\label{subsec_CCS}
Here we explain the classical Chern-Simons theory which appeared in \cite[Example 4.56]{YamashitaYonekura2021}. 
This is essentially a generalization of Subsection \ref{subsec_hol_target}, 
corresponding to the case $E = H\Z$, $\mathcal{G} = \tau \colon MT\mathrm{SO} \to H\Z$ is the usual orientation, and $n = 0$. 

Recall that, given a compact Lie group $H$ and an element $\lambda \in H^n(BH; \Z)$, the corresponding classical Chern-Simons theory of level $\lambda$ is defined by choosing an $(n+1)$-classifying object $(\mathcal{E}, \mathcal{B}, \nabla_{\mathcal{E}})$ in the category of manifolds with principal $H$-bundles with connections, and fixing an element $\widehat{\lambda} \in \widehat{H}^n(\mathcal{B}; \Z)$ lifting $\lambda$. 
Then we have the element
\begin{align}\label{eq_CCS_elem}
    (1 \otimes\lambda_\R, h_{\mathrm{CS}_{\widehat{\lambda}}}) \in (\widehat{I\Omega^{\mathrm{SO}\times H}_{\mathrm{dR}}})^{n}(\pt), 
\end{align}
whose equivalence class in $(I\Omega^{\mathrm{SO} \times H}_\dR)^n(\pt)$ does not depend on the lift $\widehat{\lambda}$.

\begin{prop}\label{prop_CCS}
The element 
$I(1 \otimes\lambda_\R, h_{\mathrm{CS}_{\widehat{\lambda}}} )
    \in (I\Omega^{\mathrm{SO}\times H}_{\mathrm{dR}})^{n}(\pt)$
coincides with the following composition, 
\begin{align*}
     BH^+\wedge MT\mathrm{SO} \xrightarrow{\lambda \wedge \tau} 
    \Sigma^n H\Z\wedge  H\Z  \xrightarrow{\mathrm{multi}}
    \Sigma^n H\Z \xrightarrow{\gamma} \Sigma^{n}I\Z. 
\end{align*}
\end{prop}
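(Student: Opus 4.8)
The plan is to reduce the statement to Theorem \ref{thm_genus_theory} applied over the finite-dimensional classifying manifold $\mathcal{B}$. First I would use the standard stable equivalence $MT(\mathrm{SO}\times H)\simeq BH^+\wedge MT\mathrm{SO}$, which holds because $\rho_d$ sends $\mathrm{SO}(d)\times H$ into $\mathrm{O}(d,\R)$ by the projection onto $\mathrm{SO}(d)$ followed by the inclusion, so the factor $H$ carries no tangential information; under this equivalence the right-hand side of the claimed formula is a genuine element of $(I\Omega^{\mathrm{SO}\times H})^n(\pt)=[MT(\mathrm{SO}\times H),\Sigma^n I\Z]$, in which $\tau$ is the orientation and $\lambda\colon BH^+\to\Sigma^n H\Z$ is the level. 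Since $(\mathcal{E},\mathcal{B},\nabla_{\mathcal{E}})$ is an $(n+1)$-classifying object, the map $\mathcal{B}\to BH$ is an $(n+1)$-equivalence, so the cofiber of $\mathcal{B}^+\wedge MT\mathrm{SO}\to BH^+\wedge MT\mathrm{SO}$ is $(n+1)$-connected, and hence (through \eqref{eq_lem_cat_equivalence_part2}) the induced map of spectra gives an equivalence of Picard groupoids $h\Bord^{\mathrm{SO}_\nabla}_{n-1}(\mathcal{B})\simeq h\Bord^{(\mathrm{SO}\times H)_\nabla}_{n-1}(\pt)$ and a canonical isomorphism $(I\Omega^{\mathrm{SO}\times H})^n(\pt)\simeq(I\Omega^{\mathrm{SO}})^n(\mathcal{B})$. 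Under this isomorphism, the composite in the statement corresponds to the composite \eqref{eq_genus_theory} for the data $E=H\Z$, $G=\mathrm{SO}$, $X=\mathcal{B}$, $\mathcal{G}=\tau$, $k=n$, $e=I(\widehat{\lambda})$, $\beta=\gamma_H\in IH\Z^0(\pt)$.

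Second, I would check that the Chern-Simons element corresponds, under the same isomorphism, to $\Phi_\tau(\widehat{\lambda}\otimes\gamma_H)\in(\widehat{I\Omega^{\mathrm{SO}}_{\dR}})^n(\mathcal{B})$. A differential stable tangential $(\mathrm{SO}\times H)$-cycle $(M,g)$ of dimension $n-1$ over $\pt$ is the same datum as an oriented differential $\mathrm{SO}$-cycle $(M,g_{\mathrm{SO}})$ equipped with a principal $H$-bundle with connection, and because all cycles and bordisms here have dimension $\le n$ and $\mathcal{B}$ is $(n+1)$-classifying, this is classified by a map $f\colon M\to\mathcal{B}$ unique up to a well-defined homotopy class, which is exactly the above equivalence on objects. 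Tracing the definitions, $h_{\mathrm{CS}_{\widehat{\lambda}}}(M,g)$ is, by the construction of classical Chern-Simons invariants, $\gamma_H$ applied to the differential pushforward $(p_M,g_{\mathrm{SO}})_*f^*\widehat{\lambda}\in\widehat{H}^1(\pt;\Z)$, which by Definition \ref{def_genus_functor} and \eqref{eq_proof_genus_theory} is precisely the value on $(M,g_{\mathrm{SO}},f)$ of $(\gamma_H)_{\R/\Z}\circ T_{\tau,\widehat{\lambda}}$; and the curvature $1\otimes\lambda_\R$ equals $(\gamma_H)_\R(R(\widehat{\lambda})\wedge\mathrm{ch}(\tau))$ since $\mathrm{ch}(\tau)=1$ and $R(\widehat{\lambda})$ represents $\lambda_\R$. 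Together with the analogous, easier comparison on morphisms, this shows that $F_{(1\otimes\lambda_\R,h_{\mathrm{CS}_{\widehat{\lambda}}})}$ from \eqref{eq_isom_IOmega_dR_part2}, composed with the above equivalence, is naturally isomorphic to $F_{\Phi_\tau(\widehat{\lambda}\otimes\gamma_H)}$; hence $I(1\otimes\lambda_\R,h_{\mathrm{CS}_{\widehat{\lambda}}})$ corresponds to $I(\Phi_\tau(\widehat{\lambda}\otimes\gamma_H))$ under $(I\Omega^{\mathrm{SO}\times H})^n(\pt)\simeq(I\Omega^{\mathrm{SO}})^n(\mathcal{B})$.

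Finally, Theorem \ref{thm_genus_theory} applied to $\Phi_\tau(\widehat{\lambda}\otimes\gamma_H)$ identifies $I(\Phi_\tau(\widehat{\lambda}\otimes\gamma_H))\in(I\Omega^{\mathrm{SO}})^n(\mathcal{B})$ with the composite $\mathcal{B}^+\wedge MT\mathrm{SO}\xrightarrow{\lambda\wedge\tau}\Sigma^n H\Z\wedge H\Z\xrightarrow{\mathrm{multi}}\Sigma^n H\Z\xrightarrow{\gamma_H}\Sigma^n I\Z$, and transporting back along $(I\Omega^{\mathrm{SO}\times H})^n(\pt)\simeq(I\Omega^{\mathrm{SO}})^n(\mathcal{B})$ (using the first paragraph) gives the claimed identity in $(I\Omega^{\mathrm{SO}\times H})^n(\pt)$. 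The main obstacle I anticipate is the bookkeeping of the second step: making rigorous the dictionary between $(\mathrm{SO}\times H)$-structures over a point and $\mathrm{SO}$-structures over $\mathcal{B}$, in particular that a bundle-with-connection has a classifying map canonical up to homotopy in the relevant range of dimensions (the same delicate point treated in the classical Chern-Simons example of \cite{YamashitaYonekura2021}), and checking that the chosen refinement $\widehat{\lambda}$ transports compatibly so that the two functors are naturally isomorphic rather than merely abstractly equal.
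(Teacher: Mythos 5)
Your proposal is correct and follows essentially the same route as the paper's proof: pass to the $(n+1)$-classifying manifold $\mathcal{B}$ via the equivalence $h\mathrm{Bord}^{\mathrm{SO}}_{n-1}(\mathcal{B})\simeq h\mathrm{Bord}^{\mathrm{SO}\times H}_{n-1}(\pt)$, identify $(1\otimes\lambda_\R, h_{\mathrm{CS}_{\widehat{\lambda}}})$ with $\Phi_\tau(\widehat{\lambda}\otimes\gamma_H)$ using that Chern--Simons invariants are differential pushforwards in $\widehat{H\Z}$, and then apply Theorem \ref{thm_genus_theory}. The paper's argument is just a terser version of yours, with the bookkeeping you flag (the canonicity of classifying maps in the relevant dimension range) handled by the refinement of the equivalence via pullback of the universal connection $\nabla_{\mathcal{E}}$.
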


\begin{proof}
The classifying map induces an equivalence $\pi_{\le 1}(L(\mathcal{B}^+ \wedge MT\mathrm{SO})_{n-1}) \simeq \pi_{\le 1}(L(BH^+ \wedge MT\mathrm{SO})_{n-1})$. 
Moreover, by the pullback of the universal connection $\nabla_{\mathcal{E}}$ it is refined to a functor of Picard groupoids, 
\begin{align}\label{eq_proof_CCS_0}
    h\mathrm{Bord}^{\mathrm{SO}}_{n-1}(\mathcal{B}) \xrightarrow{\simeq} h\mathrm{Bord}^{\mathrm{SO} \times H}_{n-1}(\pt)
\end{align}
which is naturally isomorphic to the above one under the equivalences $h\mathrm{Bord}^{\mathrm{SO}}_{n-1}(X) \simeq \pi_{\le 1}(L(X^+ \wedge MT\mathrm{SO})_{n-1})$. 

We have the element 
\begin{align}\label{eq_proof_CCS_1}
    \Phi_{\tau}(\widehat{\lambda}\otimes \gamma_H) \in (\widehat{I\Omega^{\mathrm{SO}}_{\mathrm{dR}}})^{n}(\mathcal{B}). 
\end{align}
Recall that an element $(\omega, h)\in (\widehat{I\Omega^{G}_{\mathrm{dR}}})^{n}(X)$ associates a functor $F_{(\omega, h)} \colon \hBord_{n-1}(X) \to (\R \to \R/\Z)$ by \cite[(4.46)]{YamashitaYonekura2021}. 
We claim that the functors associated to the elements \eqref{eq_proof_CCS_1} and \eqref{eq_CCS_elem} are related by
\begin{align*}
    F_{\Phi_{\tau}(\widehat{\lambda}\otimes \gamma_H)} \colon h\mathrm{Bord}^{\mathrm{SO}}_{n-1}(\mathcal{B}) \xrightarrow{\eqref{eq_proof_CCS_0}} h\mathrm{Bord}^{\mathrm{SO} \times H}_{n-1}(\pt) \xrightarrow{F_{ (1 \otimes\lambda_\R, h_{\mathrm{CS}_{\widehat{\lambda}}})}} (\R\to \R/\Z). 
\end{align*}
Indeed, this follows from the fact that the Chern-Simons invariants are given by the pushforward in differential ordinary cohomology \cite[(4.58)]{YamashitaYonekura2021}. 
Applying Theorem \ref{thm_genus_theory_2} to the element \eqref{eq_proof_CCS_1}, we get the result. 
\end{proof}

\subsubsection{The theory of massive free complex fermions : \cite[Example 4.62]{YamashitaYonekura2021}}\label{subsec_complex_eta}
Here we explain the example of the theory on massive free complex fermions which appeared in \cite[Example 4.62]{YamashitaYonekura2021}. 
This example corresponds to the case $E = K$, $\mathcal{G} = \mathrm{ABS} \colon MT\mathrm{Spin}^c \to K$ and $n = 0$. 

Recall that, given a hermitian vector bundle with unitary connection $(W, \nabla^W)$ over a manifold $X$, we get an element
\begin{align*}
    \left((\mathrm{Ch}(\nabla^W) \otimes \mathrm{Todd})|_{2k}, \overline{\eta}_{\nabla^W}\right) \in \left(\widehat{I\Omega^{\mathrm{Spin}^c}_{\mathrm{ph}}}\right)^{2k}(X) \simeq  \left(\widehat{I\Omega^{\mathrm{Spin}^c}_{\mathrm{dR}}}\right)^{2k}(X). 
\end{align*}
On the other hand, in the case $E = K$ we have the canonical choice of an element in $IK^0(\pt)$, namely the self-duality element $\gamma_K \in IK^0(\pt)$. 
Thus we have the homomorphism
\begin{align}
    \Phi_{\mathrm{ABS}}(-\otimes \gamma_K) \colon \widehat{K}^{2k}(X) \to (\widehat{I\Omega^{\mathrm{Spin}^c}_{\mathrm{dR}}})^{2k}(X). 
\end{align}
Using the model of $\widehat{K}$ in terms of hermitian vector bundles with unitary connections by Freed-Lott (\cite{FL2010}), we have the class $[W, h^W, \nabla^W, 0] \in \widehat{K}^0(X) \simeq \widehat{K}^{2k}(X)$. 

\begin{prop}\label{prop_eta}
We have the following equality in $(\widehat{I\Omega^{\mathrm{Spin}^c}_{\mathrm{dR}}})^{2k}(X)$, 
\begin{align}\label{eq_prop_eta}
    ((\mathrm{Ch}(\nabla^W)\otimes \mathrm{Todd})|_{2k}, \bar{\eta}_{\nabla^W})= \Phi_{\mathrm{ABS}}([W, h^E, \nabla^E, 0]\otimes \gamma_K). 
\end{align}
Moreover, the element $I ((\mathrm{Ch}(\nabla^W)\otimes \mathrm{Todd})|_{2k}, \bar{\eta}_{\nabla^W}) \in ({I\Omega^{\mathrm{Spin}^c}})^{2k}(X)$ coincides with the following composition, 
\begin{align*}
    X^+ \wedge MT\mathrm{Spin}^c \xrightarrow{[E]\wedge \mathrm{ABS}} 
    K\wedge K \xrightarrow{\mathrm{multi}}
    K \xrightarrow[\simeq]{\mathrm{Bott}} \Sigma^{2k}K \xrightarrow{\gamma_K} \Sigma^{2k}I\Z. 
\end{align*}
\end{prop}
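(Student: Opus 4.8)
The plan is to deduce Proposition \ref{prop_eta} from Theorem \ref{thm_genus_theory} in exactly the same way that Proposition \ref{prop_hol} is deduced, with the roles of the structure group and genus upgraded from $(\mathrm{SO}, \tau, H\Z)$ to $(\mathrm{Spin}^c, \mathrm{ABS}, K)$. The ``Moreover'' part is immediate once \eqref{eq_prop_eta} is established: apply Theorem \ref{thm_genus_theory} with $\widehat{e} = [W, h^W, \nabla^W, 0] \in \widehat{K}^0(X) \simeq \widehat{K}^{2k}(X)$, $\mathcal{G} = \mathrm{ABS}$, $\beta = \gamma_K$, noting that $I(\widehat{e}) = [E] \in K^0(X)$ and that the Bott isomorphism $K \xrightarrow{\simeq} \Sigma^{2k} K$ is what identifies the degree-$0$ composition of \eqref{eq_genus_theory} with the stated degree-$2k$ composition. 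So the whole content lies in verifying the equality \eqref{eq_prop_eta} in $(\widehat{I\Omega^{\mathrm{Spin}^c}_{\mathrm{dR}}})^{2k}(X)$.

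For that, I would compare the two sides as pairs $(\omega, h)$. The self-duality element $\gamma_K \in IK^0(\pt)$ induces, via \eqref{eq_beta_R} and \eqref{eq_beta_R/Z}, the canonical isomorphisms $(\gamma_K)_\R \colon V_K^0 = K^0(\pt)\otimes\R \xrightarrow{\simeq} \R$ and $(\gamma_K)_{\R/\Z} \colon \widehat{K}^1(\pt; \iota_K) \xrightarrow[\eqref{eq_HS_flat}]{\simeq} K^{-1}(\pt; \R/\Z) \xrightarrow{\simeq} \R/\Z$. For the curvature component: by Definition \ref{def_theory} the form part of $\Phi_{\mathrm{ABS}}([W,h^W,\nabla^W,0]\otimes\gamma_K)$ is $(\gamma_K)_\R(R(\widehat{e}) \wedge \mathrm{ch}(\mathrm{ABS}))$, and since $R([W,h^W,\nabla^W,0]) = \mathrm{Ch}(\nabla^W)$ and $\mathrm{ch}(\mathrm{ABS}) = \mathrm{Todd}$ (recalled in Subsection \ref{subsec_HS_push}), this is $(\mathrm{Ch}(\nabla^W)\otimes\mathrm{Todd})|_{2k}$, matching the first component of the left-hand side. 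For the $\R/\Z$-component: by Definition \ref{def_genus_functor}, on a cycle $(M, g, f)$ with $\dim M = 2k-1$ we have $T_{\mathrm{ABS}, \widehat{e}}(M,g,f) = (p_M, g)_* f^*[W,h^W,\nabla^W,0] \in \widehat{K}^{1}(\pt; \iota_K)$, so the $h$-component of $\Phi_{\mathrm{ABS}}$ evaluated on this cycle is $(\gamma_K)_{\R/\Z}\bigl((p_M, g)_* f^*[W, h^W, \nabla^W, 0]\bigr)$. The key input is then the theorem of Freed--Lott \cite{FL2010} (and Klonoff \cite{KlonoffDiffK}) identifying the odd-dimensional differential $K$-theory pushforward with the reduced eta invariant: under the identification $\widehat{K}^1(\pt;\iota_K) \simeq K^{-1}(\pt;\R/\Z) \simeq \R/\Z$, one has $(\gamma_K)_{\R/\Z}\bigl((p_M,g)_* f^*[W,h^W,\nabla^W,0]\bigr) = \overline{\eta}_{f^*\nabla^W}$, which is precisely the value of $\overline{\eta}_{\nabla^W}$ as a homomorphism $\mathcal{C}^{\mathrm{Spin}^c_\nabla}_{2k-1}(X)\to\R/\Z$ on $[M,g,f]$. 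Thus both components agree and \eqref{eq_prop_eta} holds.

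The main obstacle I anticipate is twofold, and both parts concern carefully matching conventions rather than proving anything genuinely new. First, one must confirm that the differential pushforward used to define $\Phi_{\mathrm{ABS}}$ — namely the Hopkins--Singer pushforward \eqref{eq_diff_push_HS_preliminary} for the submersion $p_M$ with its differential stable $\mathrm{Spin}^c$-structure — coincides (under the Freed--Lott $\simeq$ Hopkins--Singer isomorphism of differential $K$-theory models, which exists by the uniqueness results cited in Footnote \ref{footnote_uniqueness}) with the Freed--Lott pushforward for which the reduced eta invariant identity is proved in \cite{FL2010}; this requires knowing that the two differential-orientation data correspond, and that the chosen fundamental cocycle $\iota_K$ is compatible with the Todd-normalized character. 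Second, one must pin down that the sign/normalization built into $\gamma_K$ (the Anderson self-duality element of $K$, i.e.\ the Bott-periodic generator of $IK^0(\pt) \simeq K_0(\pt) \simeq \Z$ under the pairing) produces exactly the standard $\R/\Z$-valued eta invariant with no extra sign, consistently with the $H\Z$-case treatment in Proposition \ref{prop_hol}. Once these conventions are nailed down — and they are implicitly fixed already by the statement of Theorem \ref{thm_genus_theory} and the examples in \cite{YamashitaYonekura2021} — the proof is the short two-step argument above: reduce the ``Moreover'' clause to \eqref{eq_prop_eta} via Theorem \ref{thm_genus_theory}, and prove \eqref{eq_prop_eta} by matching curvature forms and applying the Freed--Lott--Klonoff identification of the odd differential $K$-theory pushforward with $\overline{\eta}$.
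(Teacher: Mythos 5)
Your proposal is correct and follows essentially the same route as the paper: the ``Moreover'' clause is reduced to \eqref{eq_prop_eta} via Theorem \ref{thm_genus_theory}, and \eqref{eq_prop_eta} is verified componentwise using $R([W,h^W,\nabla^W,0])=\mathrm{Ch}(\nabla^W)$ together with the Freed--Lott identification of the odd differential $K$-theory pushforward $(p_M,g)_*f^*[W,h^W,\nabla^W,0]$ with the reduced eta invariant $\bar{\eta}_{\nabla^W}(M,g,f)$ (the paper records this in $\widehat{K}^{-2k+1}(\pt)=(\R/\Z)\cdot u^k$ rather than after Bott shifting, but this is only a bookkeeping difference). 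The convention-matching caveats you raise are real but are treated by the paper as already settled by Footnote \ref{footnote_uniqueness} and the cited results, exactly as you anticipate.
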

\begin{proof}
The last statement follows from \eqref{eq_prop_eta} and Theorem \ref{thm_genus_theory_2}. 
Denote the Bott element by $u \in K^{-2}(\pt)$. 
The equality \eqref{eq_prop_eta} follows from 
the fact that the self-duality element $\gamma_K$ induces the canonical isomorphism
$\widehat{K}^1(\pt) \simeq \R/\Z$ and $K^0(\pt) \simeq \Z$, together with the following facts about $\widehat{K}$ in \cite{FL2010}. 
The element $ [W, h^W, \nabla^W,  0] \in \widehat{K}^{0}(X)$ satisfies
\begin{align*}
    R([W, h^E, \nabla^E, 0]) = \mathrm{Ch}(\nabla^W)\in \Omega^0(X; V_{K}^\bullet) = \Omega^0(X; \R[u, u^{-1}]), 
    \end{align*}
and, given a map $f \colon M \to X$ from an oriented $(2k-1)$-dimensional closed manifold with a physical Spin$^c$-structure $(M, g)$, the pushforward $(p_M, g)_* \colon \widehat{K}^{0}(M) \to \widehat{K}^{-2k+1}(\pt)$ is given by
    \begin{align*}
    (p_M, g)_* f^*[W, h^W, \nabla^W, 0] = \bar{\eta}_{\nabla^W}(M, g, f) \cdot u^{k} \in \widehat{K}^{-2k+1}(\pt) = (\R/\Z) \cdot u^k. 
\end{align*}
\end{proof}

\subsubsection{An interpretation of Subsection \ref{subsec_complex_eta} - Taking anomaly theories of free spinor field theories}\label{subsec_anomaly}

Here we explain an interpretation of the result in Subsection \ref{subsec_complex_eta} in terms of {\it anomalies} of free spinor field theories. 

We briefly recall the explanation in \cite[Lecture 11]{Freed19} and \cite[Section 9]{Freed:2016rqq} about free spinor field theory and its anomalies. 
A real spinor representation $\mathbb{S}$ of $\mathrm{Spin}_{1, d-1}$ and a (contractible) choice of nonnegative symmetric invariant bilinear pairing $\Gamma \colon \mathbb{S} \times \mathbb{S} \to \R^{1, d-1}$ determine an $d$-dimensional possibly {\it anomalous} theory called the free real spinor field theory $F_{(\mathbb{S}, \Gamma)}$. 
The spinor representation $\bS$ gives an element $[\bS] \in \pi_{2-d}{KO}$. 

An {\it anomalous} $d$-dimensional field theory is formulated as a boundary theory of $(d+1)$-dimensional {\it invertible} field theory called the associated {\it anomaly theory}, which is classified by $(I\Omega^G)^{d+2}$. 
In this case the relevant structure group is $G=\mathrm{Spin}$. 
As explained in the references, the anomaly theory associated to $F_{(\bS, \Gamma)}$ has partition function given by suitable fraction of exponentiated reduced eta invariants, depending on $d$ mod $8$. 

Freed and Hopkins suggested the following conjecture. 
\begin{conj}[{\cite[Conjecture 11.23]{Freed19}, \cite[Conjecture 9.70]{Freed:2016rqq}}]\label{conj_anomaly_fermion}
    The $(d+1)$-dimensional anomaly theory associated to the $d$-dimensional free real spinor field theory $F_{(\mathbb{S}, \Gamma)}$ correponds to the element in $(I\Omega^{\mathrm{Spin}})^{d+2}(\pt)$ given by the following composotion. 
    \begin{align}\label{eq_Phi_KO}
          MT\mathrm{Spin} \xrightarrow{[\bS] \wedge \mathrm{ABS}} 
    \Sigma^{d-2} KO \wedge KO\xrightarrow{\mathrm{multi}}
    \Sigma^{d-2} KO \xrightarrow{\gamma_{KO}} \Sigma^{d+2}I\Z. 
    \end{align}
    Here $\mathrm{ABS} \colon MT\mathrm{Spin} \to KO$ is the Atiyah-Bott-Shapiro map and $\gamma_{KO} \in IKO^{4}(\pt)$ is the Anderson self-duality element for the $KO$-theory.  
\end{conj}

The composition \ref{eq_Phi_KO} is the special case of the composition \eqref{eq_genus_theory_2} for $X = \pt$. 
But actually at this point we do not have a proof for Conjecture \ref{conj_anomaly_fermion}, since we do not have the complete understanding of the pushforward in differential $KO$-theory. 
Before explaining the details, let us explain the complexified version where we can actually show the corresponding statement using the result in Subsection \ref{subsec_complex_eta}.

In the complexified settings, we have the corresponding story. 
A complex spinor representation $\bS$ gives a class $[\bS] \in \pi_{2-d}K \simeq \pi_{-2-d}K$.
In this case nontrivial classes appears only when $d$ is even, so we focus on this case. 
\begin{prop}[{Complex version of Conjecture \ref{conj_anomaly_fermion}}]\label{prop_anomaly_fermion}
    The $(2k-1)$-dimensional anomaly theory associated to the $(2k-2)$-dimensional free complex spinor field theory $F_{(\mathbb{S}, \Gamma)}$ correponds to the element in $(I\Omega^{\mathrm{Spin^c}})^{2k}(\pt)$ given by the following composotion. 
    \begin{align}\label{eq_anomaly_complex}
          MT\mathrm{Spin^c} \xrightarrow{[\bS] \wedge \mathrm{ABS}} 
    \Sigma^{2k} K \wedge K\xrightarrow{\mathrm{multi}}
    \Sigma^{2k} K \xrightarrow{\gamma_{K}} \Sigma^{2k}I\Z. 
    \end{align} 
\end{prop}
\begin{proof}
    We apply the result in Subsection \ref{subsec_complex_eta} for $X=\pt$. 
    In this case, we simply have $\widehat{K}^{2k}(\pt) = K^{2k}(\pt) \simeq \Z$ and $(\widehat{I\Omega^{\mathrm{Spin^c}}})^{2k}(\pt) \simeq (I\Omega^{\mathrm{Spin^c}})^{2k}(\pt)$. 
We know from Proposition \ref{prop_eta} that, in the case $[\bS] \in K^{2k}(\pt)$ is the generator, the composition \eqref{eq_anomaly_complex} equals to the element
\begin{align}
    ( \mathrm{Todd}|_{2k}, \bar{\eta}) \in ({I\Omega^{\mathrm{Spin}^c}})^{2k}(\pt)
\end{align}
  This is indeed the anomaly theory for $F_{(\bS, \Gamma)}$, whose partition function is given by the exponentiated eta invariants. 
\end{proof}

As we see from the proof, our result can be useful even when $X = \pt$. 
The general case of nontrivial $X$ can be regarded as giving the parametrized version. 
Also we see that the proof uses the knowledge of pushforward in differential $K$-theory as reduced eta invariants. 

Let us go back to the real case.   
We can apply Theorem \ref{thm_genus_theory} to deduce that the composition \eqref{eq_Phi_KO} gives the element\footnote{
In the case $d-2 \equiv 0 \pmod 4$, we have $\widehat{KO}^{d-2}(\pt) = KO^{d-2}(\pt)$. In other cases, since $KO^{d-2}(\pt)$ is $0$ or $\Z/2$, we have $KO^{d-2}(\pt) \simeq \widehat{KO}^{d-2}(\pt)$ canonically. 
}
\begin{align}\label{eq_anomaly_fermion_elem}
    \Phi_{\mathrm{ABS}}\left([\bS]\otimes \gamma_{KO}\right) \in (\widehat{I\Omega^{\mathrm{Spin}}_{\mathrm{dR}}})^{d+2}(\pt). 
\end{align}
The remaining problem is to understand this element, which is equivalent to understanding the pushforward in differential $KO$-theory. 
As far as the author is aware of, we do not have the enough understanding of this pushforward to verify Conjecture \ref{conj_anomaly_fermion}. 

\begin{rem}
In the examples in this subsection, we used the Anderson self-duality elements in $IE^n(\pt)$ for $E = H\Z, K, KO$. 
However, the results in this subsection do {\it not} use the self-duality, and indeed there are many other interesting examples given by non-self-duality elements in $IE^n(\pt)$. 
For example, in the analysis of anomalies of the heterotic string theories in \cite{tachikawa2021topological}, we encounter such examples when $E = \mathrm{TMF}$ and $E= \mathrm{KO}((q))$ with the Witten genus $\mathcal{G} = \mathrm{Wit} \colon MT\mathrm{String} \to \mathrm{TMF}$ and $\mathcal{G} = \mathrm{Wit}_\mathrm{Spin} \colon MT\mathrm{Spin} \to \mathrm{KO}((q))$. 
\end{rem}

\appendix
\section{Differential pushforwards for proper submersions}\label{app_diff_push}

As mentioned in Subsection \ref{subsec_HS_push}, there are certain subtleties regarding the formulations of differential pushforwards. 
In this appendix, we explain that there is a nice theory on differential pushforwards for proper submersions under the assumption that $E$ is rationally even. 
The author believe that the results in this Appendix well-known among experts. 
It is convenient to start with multiplicative differential extensions $\widehat{E}$ which are not necessarily the one given by the Hopkins-Singer. 
The minimal requirements for the differential extension $\widehat{E}$ are, 
\begin{itemize}
    \item For real vector bundles $V \to X$ over manifolds, the {\it properly supported} differential cohomology groups
    \begin{align}\label{eq_hat_E_prop}
        \widehat{E}^*_{\mathrm{prop}/X}(V)
    \end{align}
    are defined with a module structure over $\widehat{E}^*(X)$, so that they refine properly supported cohomologies and forms. 
    \item If we have a vector bundle $W \to N$ and we have an {\it open} embedding $\iota \colon W \hookrightarrow V$ in the total space of another vector bundle $V \to X$, we have the corresponding map
    \begin{align*}
       \iota_* \colon \widehat{E}^*_{\mathrm{prop}/N}(W) \to \widehat{E}^*_{\mathrm{prop}/X}(V),
    \end{align*}
    refining the topological and form counterparts. 
    \item We have the {\it desuspension map}, 
    \begin{align*}
       \mathrm{desusp} \colon \widehat{E}_{\mathrm{prop}/X}^*(\R^k \times X) \to \widehat{E}^{*-k}(X), 
    \end{align*}
    refining the topological and form counterparts.
\end{itemize}
Since we are assuming $E$ is rationally even, the Hopkins-Singer's differential extension $\widehat{E}^*_\HS(-; \iota_E)$ admits a canonical multiplicative structure by \cite{Upmeier2015}, and the above properties are also satisfied.

\subsection{The normal case}\label{app_subsec_normal}

In this subsection we explain the normal case. 
The content of this subsection basically follows the unpublished survey by Bunke \cite[Section 4.8--4.10]{Bunke2013}. 
Let $G$ and $E$ be multiplicative with $E$ rationally even, and assume we are given a homomorphism of ring spectra, 
\begin{align}
    \mathcal{G} \colon MG \to E, 
\end{align}
where $MG$ is the Thom spectrum. 
Then for each real vector bundle $V$ of rank $r$ over a topological space $X$ equipped with a stable $G$-structure $g^{\mathrm{top}}$, we get the {\it Thom class} $\nu \in E^r(\overline{V})$, where we denote $\overline{V} := \Thom(V)$. 
Its multiplication gives the {\it Thom isomorphism} $E^*(X) \simeq E^{*+r}(\overline{V})$. 
Its Chern-Dold character is an element $\mathrm{ch}(\nu) \in H^r(\overline{V}; V_E^\bullet)$. 
We set
\begin{align*}
    \mathrm{Td}(\nu) := \int_{V/X}\mathrm{ch}(\nu)\in H^0(X; \Ori(V) \otimes_\R V_E^\bullet). 
\end{align*}

\begin{defn}[{Differential Thom classes, $\mathrm{Td}(\widehat{\nu})$, homotopy}]\label{def_diff_Thom}
Let $V$ be a smooth real vector bundle over a manifold $M$ of rank $r$ equipped with a stable $G$-structure $g^{\mathrm{top}}$. 
\begin{enumerate}
    \item A {\it differential Thom class} $\widehat{\nu} \in \widehat{E}^r_{\mathrm{prop}/M}(V)$ is an element such that $I(\widehat{\nu}) \in \widehat{E}^r_{\mathrm{prop}/M}(V)$ is the Thom class for $(V, g^{\mathrm{top}})$. 
    \item For such a $\widehat{\nu}$, we define
    \begin{align}
        \mathrm{Td}(\widehat{\nu}) := \int_{V/M} R(\widehat{\nu}) \in \Omega_{\mathrm{clo}}^0(M; \Ori(V) \otimes_\R V_E^\bullet). 
    \end{align}
     \item A {\it homotopy} between two differential Thom classes $\widehat{\nu}_0$ and $\widehat{\nu}_1$ is a differential Thom class $\widehat{\nu}_{I} \in \widehat{E}^r_{\mathrm{prop}/(I \times M)}(I \times V)$ for $\mathrm{pr}_M^*V$ with $\widehat{\nu}_{I}|_{\{i\} \times V} = \widehat{\nu}_i$ for $i = 0, 1$ such that
     \begin{align}\label{eq_htpy_diff_Thom}
         \mathrm{Td}(\widehat{\nu}_I) = \mathrm{pr}_M^* \mathrm{Td}(\widehat{\nu}_0). 
     \end{align}
     The homotopy class of $\widehat{\nu}$ is denoted by $[\widehat{\nu}]$. 
\end{enumerate}
\end{defn}
In particular, if $\widehat{\nu}_0$ and $\widehat{\nu}_1$ are homotopic, we have $\mathrm{Td}(\widehat{\nu}_0) = \mathrm{Td}(\widehat{\nu}_1)$. 
Thus we use the notation $\mathrm{Td}([\widehat{\nu}]) \in \Omega_{\mathrm{clo}}^0(M; \Ori(V) \otimes_\R V_E^\bullet)$. 

\begin{lem}\label{lem_diff_thom_classification}
Let $M$ and $(V, g^{\mathrm{top}})$ be as before, and $\nu$ be the Thom class for $(V, g^{\mathrm{top}})$. 
Assume we are given an element $\omega \in \Omega_{\mathrm{clo}}^0(M; \Ori(V) \otimes_\R V_E^\bullet)$ such that $\mathrm{Rham}(\omega) =\mathrm{Td}(\nu)$. 
\begin{enumerate}
    \item There exists a differential Thom class $\widehat{\nu}$ with $Td(\widehat{\nu}) = \omega$. 
    \item The set of homotopy classes $[\widehat{\nu}]$ of differential Thom classes with $\mathrm{Td}([\widehat{\nu}]) = \omega$ is a torsor over
    \begin{align}
       \frac{ H^{-1}(M; \Ori(V) \otimes_\R V_E^\bullet)}{\mathrm{Td}(\nu) \cup a(E^{-1}(M))}. 
    \end{align}
\end{enumerate}
\end{lem}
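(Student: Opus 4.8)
The plan is to classify differential Thom classes by the same kind of mapping-cone / homotopy-fiber-square argument that underlies the Hopkins--Singer model, reduced here to the properly supported setting. The key structural input is the homotopy Cartesian square \cite[(4.12)]{HopkinsSinger2005} (or its properly supported analogue) expressing $\widehat{E}^*_{\mathrm{prop}/M}(V)$ as the homotopy pullback of $E^*_{\mathrm{prop}/M}(V)$ and $\Omega^*_{\mathrm{clo,prop}/M}(V; V_E^\bullet)$ over $H^*_{\mathrm{prop}/M}(V; V_E^\bullet)$. Combined with the fact that $\widehat{E}$ is a differential extension (so the usual exact sequences relating $R$, $I$, $a$ and the flat part hold for the properly supported groups), this gives control over the set of lifts of the topological Thom class $\nu$ with prescribed curvature $\omega$.

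For part (1), I would first use the hypothesis $\mathrm{Rham}(\omega) = \mathrm{Td}(\nu) = \int_{V/M}\mathrm{ch}(\nu)$ to produce a differential lift. Pick any differential Thom class $\widehat{\nu}_0$ (one exists because $\widehat{E}$ refines $E$ and forms, e.g.\ by pulling back a universal differential Thom class, or directly from the Hopkins--Singer construction via a fundamental cocycle). Then $R(\widehat{\nu}_0)$ and $\omega$ are two closed properly supported forms on $V$ whose fiber integrals both represent $\mathrm{Td}(\nu)$ in de Rham cohomology; however more is true: $[R(\widehat{\nu}_0)] = \mathrm{ch}(\nu) = [\omega]$ in $H^*_{\mathrm{prop}/M}(V; V_E^\bullet)$, since $\mathrm{ch}(I(\widehat{\nu}_0)) = \mathrm{ch}(\nu)$ and the curvature of a differential class represents its Chern--Dold character. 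Hence $\omega - R(\widehat{\nu}_0) = d\eta$ for some $\eta \in \Omega^{-1}_{\mathrm{prop}/M}(V; V_E^\bullet)$, and $\widehat{\nu} := \widehat{\nu}_0 + a(\eta)$ satisfies $I(\widehat{\nu}) = \nu$ and $R(\widehat{\nu}) = \omega$, i.e.\ it is a differential Thom class with $\mathrm{Td}(\widehat{\nu}) = \omega$.

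For part (2), the torsor statement: given two differential Thom classes $\widehat{\nu}, \widehat{\nu}'$ with the same underlying $I(\widehat{\nu}) = I(\widehat{\nu}') = \nu$ and the same $R$, their difference lies in the flat part, which by the differential-extension axioms is the image of $a$ restricted to forms with $d = 0$ modulo exact, i.e.\ a quotient of $E^{*-1}_{\mathrm{prop}/M}(V; \R/\Z)$ — equivalently, via the properly supported Thom isomorphism, of $E^{-1}(M; \R/\Z)$-type data. The subtlety is to pass to \emph{homotopy} classes: two differential Thom classes are homotopic iff they are connected by $\widehat{\nu}_I$ with $\mathrm{Td}(\widehat{\nu}_I)$ pulled back from $M$, which by \eqref{eq_htpy_diff_Thom} forces the homotopy to be "constant in the Td-direction"; unwinding this using the standard concordance-implies-equal-up-to-$a(\text{exact})$ principle shows that the ambiguity is exactly $H^{-1}(M; \Ori(V)\otimes_\R V_E^\bullet)$ modulo the subgroup $\mathrm{Td}(\nu)\cup a(E^{-1}(M))$ coming from those differences realized by homotopies. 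Concretely: the difference of two homotopy classes is detected by an element of $H^{-1}(M; \Ori(V)\otimes V_E^\bullet)$ (fiber-integrating the $a$-part), and the relation imposed by allowing homotopies that move the curvature but not the Td is precisely the image of $E^{-1}(M)$ under $x \mapsto \mathrm{Td}(\nu)\cup a(x)$.

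The main obstacle I expect is the homotopy-class bookkeeping in part (2): making precise that the torsor group is the \emph{quotient} $H^{-1}/(\mathrm{Td}(\nu)\cup a(E^{-1}(M)))$ rather than $H^{-1}$ itself, and that no further relations arise. This requires carefully tracking which flat-part differences $a(\text{form})$ can be cobounded by a $\widehat{\nu}_I$ satisfying the constraint \eqref{eq_htpy_diff_Thom} on $\mathrm{Td}$, and verifying — using the module structure of $\widehat{E}^*_{\mathrm{prop}/M}(V)$ over $\widehat{E}^*(M)$ — that cup product with the (differential) Thom class intertwines the relevant exact sequences so that the quotient is computed correctly. Everything else (existence, the flat-part identification, compatibility of $R$ and $\mathrm{ch}$) is a direct consequence of the axioms of differential extensions and the Hopkins--Singer homotopy pullback square, hence routine.
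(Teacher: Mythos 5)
The paper itself gives no argument for this lemma: its ``proof'' is a citation to Bunke's survey (Problem 4.186) and to Grady--Sati, with the one remark that orientation bundles must be carried along because $G$ need not be oriented. Your overall strategy --- produce one lift of $\nu$, correct it by $a(\eta)$ to hit the prescribed curvature data, and classify the remaining ambiguity through the exact sequences of the differential extension together with the Thom isomorphism --- is exactly the standard argument those references carry out, so in spirit you are on the intended route.

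There is, however, a concrete slip in part (1). You write ``$\omega - R(\widehat{\nu}_0) = d\eta$'' and conclude $R(\widehat{\nu}) = \omega$, but this does not typecheck: $\omega$ lives in $\Omega^0_{\mathrm{clo}}(M; \Ori(V)\otimes_\R V_E^\bullet)$ while $R(\widehat{\nu}_0)$ is a properly supported form of degree $\mathrm{rank}(V)$ on the total space of $V$. The lemma does not ask for $R(\widehat{\nu}) = \omega$ (which is impossible) but for $\mathrm{Td}(\widehat{\nu}) = \int_{V/M} R(\widehat{\nu}) = \omega$. The fix is easy but must be made: since $\mathrm{Td}(\widehat{\nu}_0)$ and $\omega$ are cohomologous on $M$, write $\omega - \mathrm{Td}(\widehat{\nu}_0) = d\alpha$ and take $\eta$ to be a properly supported form on $V$ with $\int_{V/M}\eta = \alpha$ (e.g.\ $u\wedge p^*\alpha$ for a closed normalized Thom form $u$); then $\widehat{\nu}_0 + a(\eta)$ works. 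In part (2) your description of where the denominator comes from is slightly off: the primary source is the ambiguity of the form $\rho$ with $a(\rho)=\widehat{\nu}_1-\widehat{\nu}_0$, namely $\ker(a) = \mathrm{ch}\bigl(E^{\mathrm{rank}V-1}_{\mathrm{prop}/M}(V)\bigr)$, which under fiber integration and the Thom isomorphism $E^{*}_{\mathrm{prop}/M}(V)\simeq \nu\cdot E^{*-\mathrm{rank}V}(M)$ becomes exactly $\mathrm{Td}(\nu)\cup(\text{image of }E^{-1}(M))$; the homotopy constraint \eqref{eq_htpy_diff_Thom} then enters only to show (via the homotopy formula) that homotopic classes have difference invariant $\int_{V/M}\rho$ representable by $0$, and conversely. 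Spelling out that interplay is the ``bookkeeping'' you correctly identify as the crux, and it does go through.
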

\begin{proof}
The proof is in \cite[Problem 4.186]{Bunke2013}, and essentially the same proof appears in \cite[Proposition 49]{GradySatiDiffKO} in the case of $KO$-theory. 
We need the orientation bundles here because we allow $G$ to be un-oriented. 
\end{proof}

If $V$ is equipped with a stable differential $G$-structure $g$, applying the Chern-Weil construction \eqref{eq_def_cw_hom_part2} to $\mathrm{ch}(\mathcal{G}) \in H^0(MG; V_E^\bullet)$, we have
\begin{align}\label{eq_characteristic_form_app}
    \cw_g(\mathrm{ch}(\mathcal{G})) \in \Omega_{\mathrm{clo}}^0(M; \Ori(V) \otimes_\R V_E^\bullet). 
\end{align}
This satisfies $\mathrm{Rham}(\cw_g(\mathrm{ch}(\mathcal{G}))) =\mathrm{Td}(\nu)$. 

For $(V, g_V)$ of rank $r$ represented by $\widetilde{g}_V =(d, P, \nabla, \psi\colon P \times_{\rho_d} \R^d \simeq  \underline{\R}^{d-r} \oplus V)$ with $d \ge r+1$, we associate a differential stable $G$-structure $g_{\underline{\R} \oplus V}$ on $\underline{\R} \oplus V$ which is represented by $(d, P, \nabla, \psi \colon P \times_{\rho_d} \R^d \simeq  \underline{\R}^{d-r-1} \oplus (\underline{\R}\oplus V))$. 
For a topological stable $G$-structure $g_V^{\mathrm{top}}$, we define $g_{\underline{\R}\oplus V}^{\mathrm{top}}$ in the same way. 

If we have a homotopy class of diffential Thom classes $[\widehat{\nu}_{\underline{\R} \oplus V}]$ for $(\underline{\R}\oplus V, g_{\underline{\R}\oplus V}^{\mathrm{top}})$, the integration
\begin{align*}
    \int_{\R}[\widehat{\nu}_{\underline{\R} \oplus V}]
\end{align*}
defines a well-defined homotopy class of differential Thom classes for $(V, g_V^{\mathrm{top}})$. 
Moreover, by Lemma \ref{lem_diff_thom_classification}, the above integration gives a bijection between the sets of homotopy classes of diffential Thom classes for $(\underline{\R}\oplus V, g_{\underline{\R}\oplus V}^{\mathrm{top}})$ and for $(V, g_V^{\mathrm{top}})$. 

\begin{prop}\label{prop_diff_Thom_unique}\footnote{
In the proof we use the assumption that $E$ is rationally even. 
However, by a small modification of the proof, this assumption can be weakened to $H^{-1}(MG; V_E^\bullet)=0$.
As a result, the results in this subsection hold under this weaker condition. 
The same remark applies to Proposition \ref{prop_diff_Thom_unique_normal}. 
}
There exists a unique way to assign a homotopy class $[\widehat{\nu}(g)]$ of differential Thom classes $\widehat{\nu}(g) \in \widehat{E}^{\mathrm{rank} V}_{\mathrm{prop}/M}(V)$ to every real vector bundle with differential stable $G$-structure $(V, g) \to M$ such that the following three conditions hold. 
\begin{enumerate}
    \item It is compatible with pullbacks. 
    \item We have $\int_\R [\widehat{\nu}(g_{\underline{\R} \oplus V})] = [\widehat{\nu}(g_V)]$. 
    \item We have $\cw_g(\mathrm{ch}(\mathcal{G})) = \mathrm{Td}([\widehat{\nu}(g)])$. 
\end{enumerate}
Moreover, the resulting homotopy class $[\widehat{\nu}(g)]$ only depends on the homotopy class (Definition \ref{def_diff_Gstr_vec_part2} (4)) of differential stable $G$-structure $g$. 
\end{prop}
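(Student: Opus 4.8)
The plan is to prove existence and uniqueness separately, and in both cases to reduce everything to the classification result in Lemma \ref{lem_diff_thom_classification}. For \emph{existence}, I would start from a representative $\widetilde{g}_V = (d, P, \nabla, \psi)$ of the differential stable $G$-structure on $V$, where $d$ is large enough. The isomorphism $\psi$ identifies $\underline{\R}^{d-r}\oplus V$ with an associated bundle of $(P,\nabla)$, and the universal differential Thom class on $MG_d$ (a differential function $\overline{MG_d}\to (E;\iota_E)$ refining the $E$-theory Thom class, which exists in the Hopkins-Singer model since $E$ is rationally even, cf.\ the discussion before Lemma \ref{lem_diff_thom_classification}) can be pulled back via the classifying map of $(P,\nabla)$ to produce a differential Thom class $\widehat{\nu}$ for $\underline{\R}^{d-r}\oplus V$ with $\mathrm{Td}([\widehat{\nu}]) = \cw_{g}(\mathrm{ch}(\mathcal{G}))$ (here one uses that the Chern--Weil form of the universal connection represents $\mathrm{Td}(\nu)$, which is the paragraph following \eqref{eq_characteristic_form_app}). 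Integrating $d-r$ times over the trivial $\R$-factors, as in the discussion of $\int_\R[\widehat\nu_{\underline\R\oplus V}]$ just before the Proposition, gives a homotopy class $[\widehat\nu(g_V)]$ for $V$ itself satisfying condition (3). Conditions (1) and (2) hold essentially by naturality of the universal construction, though one must check independence of the chosen representative $\widetilde{g}_V$: two representatives differ by a stabilization, and stabilization is exactly accommodated by condition (2) together with the bijection (from Lemma \ref{lem_diff_thom_classification}) between homotopy classes of differential Thom classes for $\underline{\R}\oplus V$ and for $V$.

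For \emph{uniqueness}, suppose $[\widehat{\nu}(g)]$ and $[\widehat{\nu}'(g)]$ are two assignments satisfying (1)--(3). Fix $(V,g)\to M$. By condition (1) (pullback-compatibility) it suffices to treat the universal case: choose a classifying map from $M$ (up to homotopy, and using the universal connection) so that $(V,g)$ is pulled back from a universal $(V_{\mathrm{univ}}, g_{\mathrm{univ}})$ over a classifying space $B$ with the universal connection — more precisely, over a smooth manifold model or a filtered colimit of such. On the universal object, both $[\widehat\nu]$ and $[\widehat\nu']$ have the same $\mathrm{Td}$ by condition (3), so by Lemma \ref{lem_diff_thom_classification}(2) they differ by an element of $H^{-1}(\overline{V_{\mathrm{univ}}}; \Ori \otimes V_E^\bullet)/(\mathrm{Td}(\nu)\cup a(E^{-1}))$, which by the Thom isomorphism is a quotient of $H^{-1}(B; V_E^\bullet)$. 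Now use the rational-evenness of $E$ together with condition (2): stabilizing up ($V\mapsto\underline\R^k\oplus V$, $k$ large) and using that $H^{-1}(MG;V_E^\bullet)=0$ when $E$ is rationally even (this is where the hypothesis enters, compare Footnote to the Proposition), the ambiguity group vanishes in the stable range, and condition (2) transports this vanishing back down to $V$. Hence $[\widehat\nu(g)] = [\widehat\nu'(g)]$.

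The final clause — that $[\widehat\nu(g)]$ depends only on the homotopy class of $g$ in the sense of Definition \ref{def_diff_Gstr_vec_part2}(4) — follows by a standard homotopy/parametrized argument: given a homotopy between $\psi$ and $\psi'$ (with the same $(d,P,\nabla)$), apply the existence construction over $I\times M$ to the induced differential stable $G$-structure on $\mathrm{pr}_M^*V$; restricting to the two ends gives differential Thom classes for $(V,g)$ and $(V,g')$ whose restriction-to-endpoints exhibits them as homotopic in the sense of Definition \ref{def_diff_Thom}(3), once one checks the $\mathrm{Td}$-constancy condition \eqref{eq_htpy_diff_Thom}, which holds because $\cw$ of the Chern--Weil form is pulled back from $M$.

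I expect the main obstacle to be the \emph{uniqueness} argument, specifically making rigorous the reduction to a ``universal'' object: classifying spaces are not manifolds, so one must either work with a suitable smooth model (a manifold approximating $BG$ through a given range of degrees, together with a universal connection on it, in the style of Narasimhan--Ramanan), or phrase the whole argument as a statement about compatible families indexed by manifolds, and then feed in the vanishing $H^{-1}(MG;V_E^\bullet)=0$ in the limit. Managing this, and tracking the orientation line bundles $\Ori(V)$ throughout (since $G$ need not be oriented), are the two places where care is genuinely required; everything else is a direct application of Lemma \ref{lem_diff_thom_classification} and the properties of the integration map $\int_\R$ recalled before the statement.
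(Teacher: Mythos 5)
Your proposal follows essentially the same route as the paper: reduce via condition (2) to unstabilized representatives, pull back a differential Thom class from a finite-dimensional smooth approximation $\mathcal{B}\to BG_r$ carrying a universal connection, and use Lemma \ref{lem_diff_thom_classification} together with rational evenness to control existence and kill the ambiguity; the final clause is likewise handled by a homotopy over $I\times M$. One point to sharpen in your uniqueness step: the ambiguity group over the approximation $\mathcal{B}$ need not vanish, and condition (2) does not ``transport'' a stable vanishing down (it only governs stabilization in the rank of $V$); what the paper actually shows is that the image of the ambiguity under $f^*\colon H^{-1}(\mathcal{B};\Ori(\mathcal{V})\otimes_\R V_E^\bullet)\to H^{-1}(M;\Ori(V)\otimes_\R V_E^\bullet)$ is zero, because $\mathcal{B}\to BG_r$ is $(\dim M+1)$-connected and $H^{-1}(BG_r;(EG_r\times_{G_r}\R_{G_r})\otimes_\R V_E^\bullet)=0$ by rational evenness (even cohomology of $BG_r$ paired against the odd-vanishing $V_E^\bullet$). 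With that correction, and replacing your ``universal differential Thom class on $MG_d$'' by an arbitrary differential Thom class on the approximating $(\mathcal{V},g_{\mathcal{V}})$ (whose pullback is then shown to be choice-independent), your argument coincides with the paper's.
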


\begin{proof}
By the condition (2), it is enough to consider only $(V, g)$ such that $g$ is represented by a representative of the form $\widetilde{g} = (\mathrm{rank}(V), P, \nabla, \psi)$, i.e., without stabilization. 

The proof basically follows that for \cite[Problem 4.197]{Bunke2013}. 
Suppose we have $(V, g)$ of rank $r$ over $M$ with $\dim M = n$ with a representative $\widetilde{g} = (r, P, \nabla, \psi)$. 
Take a manifold $\mathcal{B}$ with an $(n+1)$-connected map $\mathcal{B} \to BG_r$. 
We can factor the classifying map for $P$ as $M \xrightarrow{f} \mathcal{B} \to BG_r$ with $f$ smooth. 
Take a $G_r$-connection $\nabla_{\mathcal{B}}$ on the pullback $\mathcal{P} \to \mathcal{B}$ of the universal bundle, and denote by the resulting differential $G$-structure on $\mathcal{V} := \mathcal{P} \times_{G_r} \R^{r}$ by $g_{\mathcal{V}}$. 
We have maps $f_P \colon P \to \mathcal{P}$ and $f_V \colon V \to \mathcal{V}$ covering $f$. 
We may assume that $g_{\mathcal{V}}$ pulls back to $g$ by $(f, f_P, f_V)$. 

The difference of any two choices of the homotopy classes $[\widehat{\nu}(g_{\mathcal{V}})]$ of differential Thom classes on $(\mathcal{V}, g_{\mathcal{V}})$ is measured by an element in $ \frac{ H^{-1}(\mathcal{B}; \Ori(\mathcal{V}) \otimes_\R V_E^\bullet)}{\mathrm{Td}(\nu(g_{\mathcal{V}})) \cup a(E^{-1}(\mathcal{B}))}$ by Proposition \ref{lem_diff_thom_classification}. 
The pullback map $f^* \colon H^{-1}(\mathcal{B}; \Ori(\mathcal{V}) \otimes_\R V_E^\bullet) \to H^{-1}(M; \Ori(V) \otimes_\R V_E^\bullet)$ is zero because $\mathcal{B} \to BG_r$ is $(n+1)$-connected and we have $H^{-1}(BG_r; (EG_r \times_{G_r} \R_{G_r})\otimes_\R V_E^\bullet) = 0$ since $E$ is rationally even. 
Thus, taking any homotopy class $[\widehat{\nu}(g_{\mathcal{V}})]$ of differential Thom classes for $(\mathcal{V}, g_{\mathcal{V}})$, the pullback
\begin{align}
  f_V^*[\widehat{\nu}(g_{\mathcal{V}})]
\end{align}
defines a homotopy class of differential Thom classes for $(V, g)$ which does not depend on the choice of $[\widehat{\nu}(g_{\mathcal{V}})]$. 
By the condition (1) and (2), we are forced to define the required homotopy class as
\begin{align}\label{eq_def_diff_thom_canonical}
    [\widehat{\nu}(g)] :=  f_V^*[\widehat{\nu}(g_{\mathcal{V}})], 
\end{align}
by taking any $[\widehat{\nu}(g_{\mathcal{V}})]$ on $(\mathcal{V}, g_{\mathcal{V}})$. 

We need to check that the element \eqref{eq_def_diff_thom_canonical} does not depend on the other choices made above. 
But this easily follows from the cofinality of such choices. 
Namely, given two choices with the underlying manifolds $f_i  \colon M \to \mathcal{B}_i$ for $i = 1, 2$, we may take another $\mathcal{B}$ with maps $g_i \colon \mathcal{B}_i \to \mathcal{B}$ so that $g_1 \circ f_1 = g_2 \circ f_2$, and other data on $\mathcal{B}$ which pulls back to those given on $\mathcal{B}_i$. 
From this, we conclude that the elements \eqref{eq_def_diff_thom_canonical} defined using $\mathcal{B}_1$ and $\mathcal{B}_2$ coincide with the one defined using $\mathcal{B}$, so the element \eqref{eq_def_diff_thom_canonical} is well-defined. 
By the arguments so far, they satisfy the required conditions and the uniqueness. 

For the last statement, changing a differential stable $G$-structure $g$ on $V$ to a homotopic one amounts to changing the vector bundle map $f_V \colon V \to \mathcal{V}$ by a homotopy while fixing $f$ and $f_P$ in the above procedure. 
Pulling back the homotopy class $[\widehat{\nu}(g_{\mathcal{V}})]$ by such a homotopy, we get a homotopy of differential Thom classes between the differential Thom classes pulled back at the endpoints. This completes the proof. 
\end{proof}

Now we turn to differential pushforwards for proper submersions. 
Let $p \colon N \to X$ be a proper submersion between manifolds of relative dimension $r$, and assume it is equipped with a differential stable normal $G$-structure $g_p^{\perp}$ (Definition \ref{def_diff_Gstr_vec_normal_part2}) on the relative tangent bundle $T(p)$. 
Take a representative $\widetilde{g}_p^{\perp} =(k, P, \nabla, \psi)$ of $g_p^{\perp}$. 
It induces a differntial stable $G$-structure on $P \times_{G_{k-r}}\R^{k-r}$ which we denote $g_P$, represented by $\widetilde{g}_P = (k-r, P, \nabla, \mathrm{id})$. 
By Proposition \ref{prop_diff_Thom_unique} we have a differential Thom class whose homotopy class $[\widehat{\nu}(g_P)]$ is canonically determined, 
\begin{align}\label{eq_diff_thom_canonical}
    \widehat{\nu}(g_P) \in \widehat{E}^{k-r}_{\mathrm{prop}/N}(P \times_{G_{k-r}}\R^{k-r})
\end{align}
If we stabilize $k$ to $k+1$, the homotopy classes of \eqref{eq_def_diff_thom_canonical} are related as Proposition \ref{prop_diff_Thom_unique} (2). 

Now, choose an embedding $\iota \colon N \hookrightarrow \R^k \times X$ over $X$ (i.e., $\mathrm{pr}_X \circ \iota = p$) for $k$ large enough, a tubular neighborhood $W$ of $N$ in $\R^k \times X$ with a vector bundle structure $W \to N$ so that it is a map over $X$ (this is possible because $p$ is a submersion). 
Replacing $k$ larger if necessary, choose an isomorphism $\psi_W \colon W \simeq P  \times_{G_{k-r}}\R^{k-r}$ of vector bundles over $N$ so that the isomorphism $(P \times_{G_{k-r}}\R^{k-r})\oplus T(p) \xrightarrow{\psi_W^{-1} \oplus \mathrm{id}} W \oplus T(p) \simeq \underline{\R}^k$ is homotopic to $\psi$. 
The isomorphism $\psi_W$ induces a differential stable $G$-structure $g_W$ on $W$, and the element \eqref{eq_diff_thom_canonical} induces a differential Thom class on $(W, g_W)$ denoted by
\begin{align}\label{eq_diff_thom_canonical_W}
    \widehat{\nu}(g_W) := \psi_W^*\widehat{\nu}(g_P)   \in \widehat{E}^{k-r}_{\mathrm{prop}/N}(W). 
\end{align}
We consider the composition, 
\begin{align}\label{eq_def_diff_push}
    \widehat{E}^n(N) \xrightarrow{\cdot \widehat{\nu}(g_W)} \widehat{E}^{n+k-r}_{\mathrm{prop}/N}(W) \xrightarrow{\iota_*} \widehat{E}^{n+k-r}_{\mathrm{prop}/X}(\R^k \times X) \xrightarrow{\mathrm{desusp}}\widehat{E}^{n-r}(X), 
\end{align}
where the first map uses the module structure of the properly supported $\widehat{E}$, and the middle arrow is induced by the open embedding $W \hookrightarrow \R^k \times X$. 

\begin{prop}\label{prop_diff_push_welldef}
The composition \eqref{eq_def_diff_push} only depends on the differential stable normal $G$-structure $g_p^\perp$ on $T(p)$.
\end{prop}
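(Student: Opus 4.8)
The plan is to show that the composition \eqref{eq_def_diff_push} is insensitive to the sequence of auxiliary choices made after fixing $g_p^\perp$: the representative $\widetilde{g}_p^\perp = (k,P,\nabla,\psi)$, the embedding $\iota\colon N\hookrightarrow \R^k\times X$, the tubular neighborhood $W\to N$, and the trivialization $\psi_W\colon W\simeq P\times_{G_{k-r}}\R^{k-r}$. The key point throughout is that the canonical homotopy class $[\widehat\nu(g_P)]$ of Proposition \ref{prop_diff_Thom_unique} is already canonically attached to a differential stable $G$-structure, so we only have to track how the three transports in \eqref{eq_def_diff_push} (module multiplication by a differential Thom class, the open-embedding pushforward $\iota_*$, and the desuspension) interact with changes of these choices.

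First I would handle the choice of $\psi_W$: two admissible choices differ by a bundle automorphism of $P\times_{G_{k-r}}\R^{k-r}$ which, up to homotopy, is compatible with the structures, so the pulled-back differential Thom classes $\psi_W^*\widehat\nu(g_P)$ are homotopic by the last statement of Proposition \ref{prop_diff_Thom_unique}; since homotopic differential Thom classes are identified after applying $\iota_*$ and $\mathrm{desusp}$ (here one uses that a homotopy gives an element of $\widehat E^{k-r}_{\mathrm{prop}/(I\times N)}(I\times W)$ restricting correctly at the endpoints and that $\widehat E$ is homotopy invariant), the composition is unchanged. Next I would compare two embeddings $\iota_0,\iota_1\colon N\hookrightarrow \R^k\times X$ over $X$: by the standard isotopy argument, after stabilizing $k$ (adding a trivial $\R^\ell$ factor), $\iota_0$ and $\iota_1$ become isotopic through embeddings over $X$, and one checks that stabilization of $k$ is absorbed by condition (2) of Proposition \ref{prop_diff_Thom_unique} together with the compatibility of $\mathrm{desusp}$ with suspension, so that both compositions agree with the one built from any embedding into $\R^{k+\ell}\times X$. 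The independence of the tubular neighborhood $W$ follows similarly: two tubular neighborhoods of $N$ over $X$ are related by an isotopy of vector bundle structures, which again induces a homotopy of the resulting differential Thom classes, reducing to the $\psi_W$ case.

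Finally I would deal with the choice of representative $\widetilde g_p^\perp$ of the class $g_p^\perp$: changing it by a stabilization $k\mapsto k+1$ in Definition \ref{def_diff_Gstr_vec_normal_part2} changes $g_P$ to $g_{\underline\R\oplus(\cdots)}$, and by Proposition \ref{prop_diff_Thom_unique}(2) the canonical Thom classes are related by $\int_\R$, which is exactly absorbed when one simultaneously enlarges $k$ in the embedding step; the homotopy relation in Definition \ref{def_diff_Gstr_vec_normal_part2}(4) is handled by the last sentence of Proposition \ref{prop_diff_Thom_unique}. Assembling these, any two instances of the data used to define \eqref{eq_def_diff_push} can be connected through a chain of the moves above, giving the asserted well-definedness.

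The main obstacle I expect is the bookkeeping of stabilization: one must verify carefully that enlarging $k$ (forced by the isotopy-after-stabilization arguments for embeddings and tubular neighborhoods) is compensated consistently across all three arrows of \eqref{eq_def_diff_push} — namely that $\int_\R$ on the canonical differential Thom class, the open-embedding map $\iota_*$, and $\mathrm{desusp}$ all commute with the extra $\R$-suspension in a compatible way. This is routine in spirit but is the place where the rational-evenness hypothesis (via Proposition \ref{prop_diff_Thom_unique}) and the minimal axioms on $\widehat E$ listed at the start of the appendix are genuinely used, and it is worth spelling out rather than leaving implicit.
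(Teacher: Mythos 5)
Your overall strategy (reduce every ambiguity to a homotopy of differential Thom classes over the cylinder, then stabilize) is the same as the paper's, but the step where you discharge these homotopies contains a genuine error: you invoke ``homotopy invariance of $\widehat{E}$'' to conclude that homotopic differential Thom classes give the same composition \eqref{eq_def_diff_push}. Differential cohomology is \emph{not} homotopy invariant --- that is precisely what the curvature map $R$ measures --- so this inference is not available. The correct tool is the homotopy formula (\cite[Lemma 1]{BunkeSchick2010}), which says that for a class $\widehat{x}$ on $I\times X$ the difference $i_1^*\widehat{x}-i_0^*\widehat{x}$ equals $a$ applied to the fiber integral of $R(\widehat{x})$ over $I$; this difference is in general nonzero, and the whole content of the proof is to show that it vanishes here.

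That vanishing is not automatic: it uses, first, that the element being pushed forward is $\mathrm{pr}_N^*\widehat{e}$, so its curvature has no $dt$-component, and second --- crucially --- the normalization \eqref{eq_htpy_diff_Thom} built into Definition \ref{def_diff_Thom}(3), namely that $\mathrm{Td}(\widehat{\nu}_I)=\mathrm{pr}_M^*\mathrm{Td}(\widehat{\nu}_0)$ along any admissible homotopy of differential Thom classes. With these two facts, the fiber integral over $I\times W$ factors as $\mathrm{pr}_X^*$ of a form on $X$, whose integral over $I$ is zero. Your write-up never uses condition \eqref{eq_htpy_diff_Thom}, and without it the argument fails: a path of differential Thom classes with varying $\mathrm{Td}$ would produce an $a$-exact but nonzero correction to the pushforward. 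Once this central computation is put in place, the rest of your outline (independence of $\psi_W$ via the last statement of Proposition \ref{prop_diff_Thom_unique}, stabilization in $k$ via condition (2) there and compatibility of $\mathrm{desusp}$ with suspension, and cylinder arguments for the embedding and tubular neighborhood) matches the paper's proof.
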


\begin{proof}
The above procedure includes the following ambigiuities : the choice of $ \widehat{\nu}(g_P)$ representing $[\widehat{\nu}(g_P)]$ and the choice of the data of embedding with a tubular neighborhood and an isomorphism $\psi_W$. 
The independence on $\psi_W$ directly follows from the last statement of Proposition \ref{prop_diff_Thom_unique}. 

First we show the independence on the choice of $ \widehat{\nu}(g_P)$, with the other data fixed. 
Since its homotopy class $[\widehat{\nu}(g_P)]$ is fixed by Proposition \ref{prop_diff_Thom_unique}, any two choices $\widehat{\nu}_i(g_P)$, $i = 0, 1$, are connected by a homotopy $\widehat{\nu}_{I}\in \widehat{E}^{k-r}_{\mathrm{prop}/(I \times N)}(I \times (P \times_{G_{k-r}}\R^{k-r}))$. 
Its pullback by $\psi_W$ gives a homotopy $\widehat{\nu}_{I \times W} := \psi_W^* \widehat{\nu}_I$ between the corresponding differential Thom classes on $(W, g_W)$. 
Denote the inclusion by $i_t \colon N \simeq \{t\} \times N \hookrightarrow I \times N$ for $t = 0, 1$. 
Consider the following commutative diagram, 
\begin{align}\label{diag_proof_diff_push_welldef}
    \xymatrix{
     \Omega^n(I \times N; V_E^\bullet) \ar[r]^-{ \wedge R(\widehat{\nu}_{I \times W})} &\Omega^{n+k-r}_{\mathrm{prop}/(I \times N)}(I \times W; V_E^\bullet)  \ar[rr]^-{\int_{(I \times W)/(I \times X)}} &&\Omega^{n-r}(I \times X; V_E^\bullet) \ar[r]^-{\int_{(I \times X) / X}}  & \Omega^{n-r-1}(X; V_E^\bullet) \ar[d]^-{a}
    \\
    \widehat{E}^n(I \times N) \ar[r]^-{\cdot \widehat{\nu}_{I \times W}} \ar[u]^-{R}  &\widehat{E}^{n+k-r}_{\mathrm{prop}/(I \times N)}(I \times W)  \ar[rr]^-{(\mathrm{desusp}) \circ (\mathrm{id}_I \times\iota)_*}\ar[u]^-{R}  &&\widehat{E}^{n-r}(I \times X) \ar[u]^-{R} \ar[r]^-{i_1^* - i_0^*} & \widehat{E}^{n-r}(X)
    }. 
\end{align}
The commutativity of the middle square is because the vector bundle structure $W \to N$ is a map over $X$.  
The commutativity of the right square is by the homotopy formula (\cite[Lemma 1]{BunkeSchick2010}). 

Take any element $\widehat{e} \in \widehat{E}^n(N)$. 
Then the image of $\mathrm{pr}_N^*\widehat{e} \in  \widehat{E}^n(I \times N)$ under the composition of the bottom arrows in \eqref{diag_proof_diff_push_welldef} is equal to the difference of the elements in $\widehat{E}^{n-r}(X)$ obtained by applying to $\widehat{e}$ the composition \eqref{eq_def_diff_push} using $\widehat{\nu}_0(g_P)$ and $\widehat{\nu}_1(g_P)$. 
By the commutativity of \eqref{diag_proof_diff_push_welldef}, it is enough to check that the element $R(\mathrm{pr}_N^*\widehat{e}) \in \Omega_{\mathrm{clo}}^n(I \times N; V_E^\bullet)$ maps to zero under the composition of the top arrows in \eqref{diag_proof_diff_push_welldef}. 
Indeed, since $W \to N$ is a map over $X$, we can factor the upper middle horizontal integration in \eqref{diag_proof_diff_push_welldef} on $I \times N$, and the result is equal to
\begin{align}\label{eq_proof_diff_push_welldef}
    \int_{(I \times X)/X} \int_{(I \times N)/(I \times X)}\mathrm{pr}_N^*R(\widehat{e}) \wedge \int_{(I \times W)/(I \times N)} R(\widehat{\nu}_{I \times W}), 
\end{align}
and by (recall \eqref{eq_htpy_diff_Thom})
\begin{align*}
     \int_{(I \times W)/(I \times N)} R(\widehat{\nu}_{I \times W})
     = \mathrm{Td}(\widehat{\nu}_{I \times W}) 
     = \mathrm{pr}_N^*\mathrm{Td}(\widehat{\nu}_0(g_P)) , 
\end{align*}
so \eqref{eq_proof_diff_push_welldef} is equal to
\begin{align*}
    \int_{(I \times X)/X} \mathrm{pr}_{X}^* \int_{N/X}R(\widehat{e}) \wedge \mathrm{Td}(\widehat{\nu}_0(g_P)) = 0, 
\end{align*}
as desired. 
Thus we conclude that, fixing the data of an embedding with a tubular neighborhood, the composition \eqref{eq_def_diff_push} only depends on the homotopy class $[\widehat{\nu}(g_P)]$. 

Now consider the stabilization of the embeddings, increasing $k$ to $(k+1)$ and $W$ to $\underline{\R} \oplus W$. 
By the condition (2) in Proposition \ref{prop_diff_Thom_unique} and the result so far, 
we also conclude that the composition \eqref{eq_def_diff_push} is invariant under this stabilization.

The desired independence of \eqref{eq_def_diff_push} on the remaining choices is also proved in a parallel way, by choosing corresponding objects on the cylinder so that they restrict to stabilizations of the given ones on the endpoints. This completes the proof of Proposition \ref{prop_diff_push_welldef}.

\end{proof}

Thus we define the following. 
\begin{defn}\label{def_diff_push_normal}
Let $p \colon N \to X$ be a proper submersion of relative dimension $r$, equipped with a differential stable normal $G$-structure $g_p^{\perp}$ on the relative tangent bundle $T(p)$. 
We define the {\it differential pushforward map}, 
\begin{align*}
    (p, g_p^\perp)_* \colon \widehat{E}^n(N) \to \widehat{E}^{n-r}(X)
\end{align*}
to be the composition \eqref{eq_def_diff_push}.
This does not depend on any choices by Proposition \ref{prop_diff_push_welldef}. 
\end{defn}
By the construction, the following diagram commutes. 
\begin{align}\label{diag_diff_push_app}
    \xymatrix{
        \Omega^{n-1}(N; V_{E}^\bullet) / \mathrm{im}(d) \ar[r]^-{a}\ar[d]^{\int_{N / X}-\wedge \cw_{g_p^\perp}(\mathrm{ch}(\mathcal{G}))} & \widehat{E}^n(N) \ar[d]^{(p, g_p^\perp)_*}\ar[r]^{I} \ar@/^18pt/[rr]^R& E^n(N)\ar[d]^{(p, g_p^{\perp, \mathrm{top}})_*} &  \Omega_{\mathrm{clo}}^n(N; V_{E}^\bullet)\ar[d]^{\int_{N / X}-\wedge \cw_{g_p^\perp}(\mathrm{ch}(\mathcal{G})) } \\
        \Omega^{n-r-1}(X; V_{E}^\bullet) / \mathrm{im}(d) \ar[r]^-{a} & \widehat{E}^{n-r}(X) \ar[r]^{I} \ar@/_18pt/[rr]^R& E^{n-r}(X) &  \Omega_{\mathrm{clo}}^{n-r}(X; V_{E}^\bullet)
        }. 
\end{align}
In this sense, Definition \ref{def_diff_push_normal} refines the pushforwards on $E^*$ and $\Omega^*(-; V_E^\bullet)$. 

An important property of differential pushforwards is the {\it Bordism formula} \cite[Problem 4.235]{Bunke2013}, which says that if we have a bordism $(W, g_W^\perp) \colon (M_-, g_-^\perp) \to (M_+, g_+^\perp)$, the differential pushforwards at the boundary can be computed by the integration of the characteristic form on the bordism. 
Its normal variant is stated in the form we use in this paper as Fact \ref{fact_bordism}. 
To prove it, we need to consider differential pushforwards for proper maps which is not submersions, namely boundary defining functions $W \to I$. 
The result easily follows by the homotopy formula (\cite[Lemma 1]{BunkeSchick2010}). 
For the details of the proof we refer \cite[Problem 4.235]{Bunke2013}. 

\subsection{Differential pushforwards in Hopkins-Singer's differential extensions}\label{app_subsec_HS}
Now we turn to the Hopkins-Singer's differential extensions. 
As we explain, the definition of differential pushforwards in \cite{HopkinsSinger2005} differs from the one in Subsection \ref{app_subsec_tangential}. 
In this subsubsection, we clarify their relation in the settings of our interest (Proposition \ref{prop_push_HS=general_normal}). 

Fix fundamental cocycles $\iota_E \in Z^0(E; V_E^\bullet)$ and $\iota_{MG} \in Z^0(MG; V_{MG}^\bullet)$ for $E$ and $MG$, respectively. 
Since $E$ is rationally even, the Hopkins-Singer's model $\widehat{E}^*_\HS(-; \iota_E)$ admits a canonical multiplicative structure by \cite{Upmeier2015}. 
We briefly explain it here. 
We only explain the even-degrees. 
The remaining cases are induced by requiring the compatibility with the $S^1$-integration. 
Let $n$ and $m$ be even integers, and denote by $\mu_{nm} \colon E_n \wedge E_m \to E_{n+m}$ a multiplication map. 
We need to choose a reduced cochain $c_{nm} \in \widetilde{C}^{n+m-1}(E_n \wedge E_m; V_E^\bullet)$ so that
\begin{align}\label{eq_c_nm}
    \delta c_{nm} = \iota_n \cup \iota_m - \mu_{nm}^* \iota_{n+m}. 
\end{align}
Since $E$ is rationally even, we have $\widetilde{H}^{n+m-1}(E_n \wedge E_m; V_E^\bullet) = 0$ by the proof of \cite[Lemma 3.8]{BunkeSchick2010}. 
Thus any two choices of such cochains $c_{nm}$ differ by a coboundary. 
Using $c_{nm}$ we get the map of differential function spaces (\cite[Remark 4.17]{HopkinsSinger2005}), 
\begin{align}\label{eq_HS_multi_1}
    (E_n \wedge E_m; (\iota_E)_n \cup (\iota_E)_m)^M \to (E; \iota_E)_{n+m}^M,
\end{align}
for any manifold $M$. 
Also choose a natural cochain homotopy $B \colon \Omega^n(-) \otimes \Omega^m(-) \to C^{n+m - 1}(-)$ cobounding the difference between $\wedge$ on forms and $\cup$ on singular cochains as in \cite[(3.8)]{HopkinsSinger2005}, \cite[Section 6]{Upmeier2015}. 
Any two such choices are naturally cochain homotopic. 
It induces the map
\begin{align}\label{eq_HS_multi_2}
    (E; \iota_E)^M_n \times  (E; \iota_E)^M_m \to (E_n \wedge E_m; (\iota_E)_n \cup (\iota_E)_m)^M ,
\end{align}
for any $M$. 
Combining \eqref{eq_HS_multi_1} and \eqref{eq_HS_multi_2}, we get the multiplication map, 
\begin{align}
    \cdot \colon \widehat{E}_\HS^n(M; \iota_E) \otimes \widehat{E}_\HS^{m}(M; \iota_E) \to \widehat{E}_\HS^{n+m}(M; \iota_E). 
\end{align}
This does not depend on any of the choices above. 
For a real vector bundle $V \to M$, 
in the same way we get a map using the properly supported differential functions (\cite[Section 4.3]{HopkinsSinger2005})
\begin{align}\label{eq_HS_multi_3}
    (E; \iota_E)^M_n \times  (E; \iota_E)^{\overline{V}}_m \to (E; \iota_E)^{\overline{V}}_{n+m}, 
\end{align}
which gives the module structure, 
\begin{align}
    \cdot \colon \widehat{E}_\HS^n(M; \iota_E) \otimes \widehat{E}_{\HS, \mathrm{prop}/M}^{m}(V; \iota_E) \to \widehat{E}_{\HS, \mathrm{prop}/M}^{n+m}(V; \iota_E). 
\end{align}

As we mentioned in Subsection \ref{subsec_HS_push}, Hopkins-Singer's normal differential $BG$-orientations are defined in terms of differential functions to $(MG; \iota_{MG})$. 
A differential pushforward is defined by fixing a map of differential function spaces
\begin{align}\label{eq_app_hat_mathcalG}
    \widehat{\mathcal{G}} \colon \left(MG_{-r} \wedge (E_{n})^+; V_{\mathcal{G}}(\iota_{MG})_{-r} \cup \iota_E \right) \to (E; \iota_E)_{n-r}. 
\end{align}
whose underlying map factors as $MG_{-r} \wedge (E_{n})^+ \xrightarrow{\mathcal{G} \wedge \mathrm{id}} E_{-r} \wedge (E_n)^+ \xrightarrow{\mu_{-r, n}} E_{n-r}$. 
Here $V_{\mathcal{G}} \colon V_{MG}^\bullet \to V_E^\bullet$ is induced by $\mathcal{G}$, so that $V_{\mathcal{G}}(\iota_{MG}) \in Z^0(MG; V_E^\bullet)$ represents $\mathrm{ch}(\mathcal{G})$. 
We can take a $c_{\mathcal{G}} \in C^{-1}(MG; V_E^\bullet)$ so that $\delta c_{\mathcal{G}} =\mathcal{G}^*\iota_E - V_{\mathcal{G}} (\iota_{MG})$, and it is determined up to coboundary because $H^{-1}(MG; V_E^\bullet) = 0$. 
We may take \eqref{eq_app_hat_mathcalG} to be the composition
\begin{align}
   \widehat{\mathcal{G}}\colon \left(MG_{-r} \wedge (E_{n})^+; V_{\mathcal{G}}(\iota_{MG})_{-r} \cup (\iota_E)_n \right)
    \to (E_{-r} \wedge E_n; (\iota_E)_{-r} \cup (\iota_E)_n)
    \xrightarrow{\eqref{eq_HS_multi_1}} (E; \iota_E)_{n-r}, 
\end{align}
where the first map uses $c_{\mathcal{G}}$. 
Let $V \to M$ be a real vector bundle, and consider the following diagram. 
\begin{align}\label{diag_HS_multi}
    \xymatrix{
    (MG; \iota_{MG})_{-r}^{\overline{V}} \times (E; \iota_E)^M_n \ar[r] \ar[d] & (E; \iota_E)^{\overline{V}}_{-r} \times (E; \iota_E)^M_n \ar[d]\ar[rd]^{\eqref{eq_HS_multi_3}} & \\
    \left(MG_{-r} \wedge (E_{n})^+; V_{\mathcal{G}}(\iota_{MG})_{-r} \cup (\iota_E)_n \right)^{\overline{V}} \ar[r] \ar@/_18pt/[rr]_{\widehat{\mathcal{G}}}
    & (E_{-r} \wedge E_n; (\iota_E)_{-r} \cup (\iota_E)_n)^{\overline{V}} \ar[r]
    &(E; \iota_E)^{\overline{V}}_{n-r} 
    }
\end{align}
Here the top horizontal arrow uses $c_{\mathcal{G}}$, the left vertical arrow uses the cochain homotopy $B$, and the remaining arrows are as before. 
The two triangles commute. 
The square does not commute on the level of differential function spaces, but we can easily check that the difference is a coboundary so the induced maps on the differential cohomology level, 
\begin{align}\label{eq_HS_multi_4}
    (\widehat{MG})_{\HS, \mathrm{prop}/M}^{-r}(V; \iota_{MG}) \otimes \widehat{E}^{n}_\HS(M; \iota_E) \to  \widehat{E}^{n-r}_{\HS, \mathrm{prop}/M}(V; \iota_E)
\end{align}
are the same. 
Using the top factorization of \eqref{diag_HS_multi}, we see that \eqref{eq_HS_multi_4} factors as
\begin{align}\label{eq_HS_multi_5}
    (\widehat{MG})_{\HS, \mathrm{prop}/M}^{-r}(V; \iota_{MG}) \otimes \widehat{E}^{n}_\HS(M; \iota_E) \to 
     \widehat{E}_{\HS, \mathrm{prop}/M}^{-r}(V; \iota_E)\otimes \widehat{E}^{n}_\HS(M; \iota_E)
     \xrightarrow{\cdot}
    \widehat{E}^{n-r}_{\HS, \mathrm{prop}/M}(V; \iota_E)
\end{align}

To put them into the picture in Subsection \ref{app_subsec_normal}, apply the discussions there in the case $E = MG$ and $\mathcal{G} = \mathrm{id} \colon MG \to MG$.  
Assume that we have a proper submersion $p \colon N \to X$ equipped with a differential stable normal $G$-structure $g_p^{\perp}$ on the relative tangent bundle $T(p)$, represented by $\widetilde{g}_p^\perp = (k, P, \nabla, \psi)$. 
A {\it Hopkins-Singer's normal differential $BG$-orientation} (\cite[Section 4.9.2]{HopkinsSinger2005}) $g_p^{\perp, \HS}$ consists of choices of an embedding $N \hookrightarrow \R^k \times X$ over $X$, a tubular neighborhood with a vector bundle structure $W \to N$, an isomorphism $\psi_W \colon W \simeq P  \times_{G_{k-r}}\R^{k-r}$ as in Subsection \ref{app_subsec_normal} (in general $W \to N$ is not required to be a map over $X$), and 
a lift of a classifying map for the induced $G$-structure $(W, g_W^\mathrm{top})$ on $W \to N$ to a
differential function $t(g_p^{\perp, \HS}) \colon \overline{W} \to (MG_{k-r}; (\iota_{MG})_{k-r})$. 
Then, the differential function $t(g_p^{\perp, \HS})$ represents a differential Thom class for $(W, g^{\mathrm{top}}_W)$,
\begin{align}\label{eq_diff_thom_MG}
    \left\langle t(g_p^{\perp, \HS})\right\rangle \in (\widehat{MG})_{\HS, \mathrm{prop}/N}^{k-r}(W; \iota_{MG}), 
\end{align}
where we denoted by $\langle (c, h, \omega) \rangle$ the differential cohomology class represented by a differential function $(c, h, \omega)$. 
Now we define the following. 

\begin{defn}\label{def_lift_HS_ori}
Let $p \colon N \to X$ be a proper submersion between manifolds of relative dimension $r$, and assume it is equipped with a 
differential stable normal $G$-structure $g_p^{\perp}$ on $T(p)$. 
A Hopkins-Singer's normal differential $BG$-orientation $g_p^{\perp, \HS}$ is said to be a {\it lift} of $g_p^\perp$ if, in the notations above, 
\begin{itemize}
    \item The vector bundle structure $W \to N$ is a map over $X$, and
    \item The homotopy class $\left[ \left\langle t(g_p^{\perp, \HS})\right\rangle\right]$ of the differential Thom class $\left\langle t(g_p^{\perp, \HS})\right\rangle$ is the one associated to $g_W$ by Proposition \ref{prop_diff_Thom_unique} (applied to $E = MG$).  
\end{itemize}
\end{defn}
In particular, this means that, 
\begin{align*}
    \cw_{g_W}(\mathrm{ch}(\mathrm{id}_{MG})) = \mathrm{Td}\left(\left\langle t(g_p^{\perp, \HS})\right\rangle\right)
    := \int_{W/N} R\left(\left\langle t(g_p^{\perp, \HS})\right\rangle \right).  
\end{align*}
where $\mathrm{ch}(\mathrm{id}_{MG}) \in H^0(MG; V_{MG}^\bullet)$. 

Now assume we are given an element $\widehat{e} \in \widehat{E}_\HS^{n}(N; \iota_E)$. 
Then, in \cite[Section 4.10]{HopkinsSinger2005} the differential pushforward of $\widehat{e}$ is formulated as follows. 
Take a differential function $t(\widehat{e}) \colon N \to (E; \iota)_n$ representing $\widehat{e}$. 
Apply the left bottom composition of \eqref{diag_HS_multi} for the vector bundle $W \to N$ to the pair $(t(g_p^{\perp, \HS}), t(\widehat{e}))$ to get a differential function in $(E; \iota_E)_{n+k-r}^{\overline{W}}$. 
By the open embedding $W \hookrightarrow \R^k \times X$ we get a differential function in $(E, \iota_E)_{n+k-r}^{\overline{\R^k} \times X}$, and this represents the desired element $(p, g_p^{\perp, \HS})_*\widehat{e} \in \widehat{E}_\HS^{n-r}(X; \iota_E)$. 
By the discussion so far, the result is the same if we use the top right composition in \eqref{diag_HS_multi}, and it is given by the composition \eqref{eq_HS_multi_5}. 
Then we see that the above definition of $(p, g_p^{\perp, \HS})_*\widehat{e}$ exactly translates into the definition of differential pushforwards \eqref{eq_def_diff_push} in the Subsection \ref{app_subsec_normal}. 
Thus we conclude that
\begin{prop}\label{prop_push_HS=general_normal}
In the settings of Definition \ref{def_lift_HS_ori}, 
the differential pushforward map $(p, g_p^\perp)_* \colon \widehat{E}_\HS^*(N; \iota_E) \to \widehat{E}_\HS^{n-r}(X; \iota_E)$ in Definition \ref{def_diff_push_normal} applied to $\widehat{E}_\HS^n(-; \iota_E)$ coincides with the differential pushforward map $(p, g_p^{\perp, \HS})_*$ in \cite{HopkinsSinger2005} as long as we use $g_p^{\perp, \HS}$ lifting $g_p^{\perp}$. 
\end{prop}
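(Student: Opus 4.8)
The plan is to unwind both definitions of the differential pushforward and match them term by term, using the machinery assembled in Subsections~\ref{app_subsec_normal} and~\ref{app_subsec_HS}. In fact most of the comparison has already been carried out in the paragraphs preceding the statement; what genuinely remains is to identify one differential Thom class, and the uniqueness part of Proposition~\ref{prop_diff_Thom_unique} is exactly the tool for that.

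\textbf{Step 1 (rewriting the Hopkins--Singer pushforward).} First I would recall that $(p,g_p^{\perp,\HS})_*\widehat{e}$ is obtained by pairing $t(\widehat{e})\colon N\to (E;\iota_E)_n$ with $t(g_p^{\perp,\HS})\colon \overline{W}\to (MG_{k-r};(\iota_{MG})_{k-r})$ along the left--bottom composition of~\eqref{diag_HS_multi} for $W\to N$, transporting along the open embedding $W\hookrightarrow \R^k\times X$, and desuspending. Since the square in~\eqref{diag_HS_multi} commutes up to a coboundary, the induced maps~\eqref{eq_HS_multi_4} on differential cohomology agree, and the top--right route factors through~\eqref{eq_HS_multi_5}. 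Hence $(p,g_p^{\perp,\HS})_*\widehat{e}$ is the image of $\widehat{e}$ under
\begin{align*}
\widehat{E}^n(N)\xrightarrow{\cdot\,\Xi}\widehat{E}^{n+k-r}_{\mathrm{prop}/N}(W)\xrightarrow{\iota_*}\widehat{E}^{n+k-r}_{\mathrm{prop}/X}(\R^k\times X)\xrightarrow{\mathrm{desusp}}\widehat{E}^{n-r}(X),
\end{align*}
where $\Xi\in\widehat{E}^{k-r}_{\HS,\mathrm{prop}/N}(W;\iota_E)$ is the image of $\left\langle t(g_p^{\perp,\HS})\right\rangle$ under the coefficient--change map $(\widehat{MG})_{\HS,\mathrm{prop}/N}^{k-r}(W;\iota_{MG})\to\widehat{E}^{k-r}_{\HS,\mathrm{prop}/N}(W;\iota_E)$.

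\textbf{Step 2 (identifying $\Xi$ with $\widehat{\nu}(g_W)$).} Comparing the displayed composition with~\eqref{eq_def_diff_push}, it remains to show $[\Xi]=[\widehat{\nu}(g_W)]$, the canonical homotopy class for $(W,g_W)$ produced by Proposition~\ref{prop_diff_Thom_unique} applied to $E$. I would argue that the assignment $(V,g)\mapsto\widehat{\mathcal{G}}_*[\widehat{\nu}_{MG}(g)]$, where $\widehat{\nu}_{MG}$ is the canonical $MG$--differential Thom class and $\widehat{\mathcal{G}}_*$ the coefficient--change map, satisfies the three characterizing conditions of Proposition~\ref{prop_diff_Thom_unique}: first, its image is genuinely a differential Thom class, because $\widehat{\mathcal{G}}_*$ refines the ring map $\mathcal{G}\colon MG\to E$, which carries topological Thom classes to topological Thom classes; compatibility with pullbacks and with the stabilization $\int_\R$ follows from naturality of $\widehat{\mathcal{G}}$ and of $\widehat{\nu}_{MG}$; and the Chern--Weil condition $\cw_g(\mathrm{ch}(\mathcal{G}))=\mathrm{Td}([\widehat{\mathcal{G}}_*\widehat{\nu}_{MG}(g)])$ holds because $V_{\mathcal{G}}$ carries $\mathrm{ch}(\mathrm{id}_{MG})$ to a representative of $\mathrm{ch}(\mathcal{G})$ and Chern--Weil forms are natural. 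By uniqueness in Proposition~\ref{prop_diff_Thom_unique} this assignment coincides with $(V,g)\mapsto[\widehat{\nu}(g)]$. Since $g_p^{\perp,\HS}$ is a \emph{lift} of $g_p^{\perp}$, Definition~\ref{def_lift_HS_ori} gives $\left[\left\langle t(g_p^{\perp,\HS})\right\rangle\right]=[\widehat{\nu}_{MG}(g_W)]$, hence $[\Xi]=[\widehat{\nu}(g_W)]$.

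\textbf{Conclusion and main obstacle.} Substituting $[\Xi]=[\widehat{\nu}(g_W)]$ into the composition of Step~1 turns it into~\eqref{eq_def_diff_push}, which is the definition of $(p,g_p^{\perp})_*$ in Definition~\ref{def_diff_push_normal}, so the two pushforwards agree on $\widehat{E}_\HS^n(-;\iota_E)$. The one delicate point is Step~2: matching the multiplicative differential structure of $\widehat{E}_\HS$ --- with its auxiliary cochains $c_{nm}$, $c_{\mathcal{G}}$ and the cochain homotopy $B$ --- against the Chern--Weil data defining $\widehat{\nu}$. Proposition~\ref{prop_diff_Thom_unique} is precisely what reduces this to verifying conditions on de Rham forms and underlying topological Thom classes, so no further cochain--level bookkeeping beyond what is recorded in Subsection~\ref{app_subsec_HS} is needed; rational evenness of $E$ (through $H^{-1}(MG;V_E^\bullet)=0$) is what makes $\widehat{\mathcal{G}}$, and hence $\Xi$, independent of the auxiliary choices.
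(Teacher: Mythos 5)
Your proposal is correct and follows essentially the same route as the paper: rewrite the Hopkins--Singer pushforward using the commutativity-up-to-coboundary of the square in \eqref{diag_HS_multi} so that it factors through \eqref{eq_HS_multi_5}, and then match the resulting composition against \eqref{eq_def_diff_push}. Your Step 2 --- identifying the image of the canonical $MG$-valued differential Thom class under the coefficient-change map with the canonical $E$-valued one by checking the three characterizing conditions and invoking the uniqueness clause of Proposition \ref{prop_diff_Thom_unique} --- is exactly the point the paper compresses into the phrase ``exactly translates,'' and your explicit verification is the correct justification of it.
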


\subsection{The tangential case}\label{app_subsec_tangential}
Now we explain the tangential variants of the last Subsections \ref{app_subsec_normal} and \ref{app_subsec_HS}. 
The constructions and verifications are parallel to the normal case, so we go briefly. 

In this case, we are given a homomorphism of ring spectra, 
\begin{align}\label{eq_app_mathcalG_MTG}
    \mathcal{G} \colon MTG \to E, 
\end{align}
where $MTG$ is the Madsen-Tillmann spectrum. 
$MTG$ is constructed as a direct limit of Thom spaces of stable normal bundles to the universal bundles over approximations of $BG_d$'s, so classifies vector bundles with stable normal $G$-structures. 
Then for each real vector bundle $V$ of rank $r$ over a topological space $X$ equipped with a topological stable normal $G$-structure $g^{\perp,\mathrm{top}}$, we get the {\it Thom class} $\nu \in E^r(\overline{V})$, whose multiplication gives the {\it Thom isomorphism} $E^*(X) \simeq E^{*+r}(\overline{V})$. 

We formulate the notion of {\it differential Thom classes} as a differential refinements of the Thom classes, as well as differential forms $\mathrm{Td}(\widehat{\nu})$ and homotopies in the same way as Definition \ref{def_diff_Thom}. 
By the exactly the same proof, the classification result of differential Thom classes corresponding to Lemma \ref{lem_diff_thom_classification} also holds in the case here. 
The Chern-Dold character for \eqref{eq_app_mathcalG_MTG} is an element $\mathrm{ch}(\mathcal{G}) \in H^0(MTG; V_E^\bullet)$. 
If $V \to M$ is equipped with a stable normal $G$-structure $g^\perp$, the characteristic form \eqref{eq_characteristic_form_app} is replaced by the form $\cw_{g^\perp}(\mathrm{ch}(\mathcal{G}))$, 
where we use the Chern-Weil construction in \eqref{eq_def_cw_hom_normal_part2}. 
Then, the same proof as that of Proposition \ref{prop_diff_Thom_unique} shows the following. 
\begin{prop}\label{prop_diff_Thom_unique_normal}
There exists a unique way to assign a homotopy class $[\widehat{\nu}(g^\perp)]$ of differential Thom classes $\widehat{\nu}(g^\perp) \in \widehat{E}^{\mathrm{rank} V}_{\mathrm{prop}/M}(V)$ to every real vector bundle with differential stable normal $G$-structure $(V, g^\perp) \to M$ such that the following three conditions hold. 
\begin{enumerate}
    \item It is compatible with pullbacks. 
    \item We have $\int_\R [\widehat{\nu}(g^\perp_{\underline{\R} \oplus V})] = [\widehat{\nu}(g^\perp_V)]$. 
    \item We have $\cw_{g^\perp}(\mathrm{ch}(\mathcal{G})) = \mathrm{Td}([\widehat{\nu}({g^\perp})]) := \int_{V / M}R(\widehat{\nu}(g^\perp))$. 
\end{enumerate}
Moreover, the resulting homotopy class $[\widehat{\nu}(g^\perp)]$ only depends on the homotopy class (Definition \ref{def_diff_Gstr_vec_normal_part2} (4)) of differential stable normal $G$-structure $g^\perp$. 
\end{prop}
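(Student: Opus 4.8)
The plan is to run the proof of Proposition \ref{prop_diff_Thom_unique} essentially verbatim, systematically replacing tangential data by normal data: the Chern--Weil map \eqref{eq_def_cw_hom_part2} by \eqref{eq_def_cw_hom_normal_part2}, and the classifying space $BG_r$ by $BG_{d-r}$, which classifies rank-$r$ bundles carrying a normal $G$-structure with structure group $G_{d-r}$ and an approximation of which underlies a constituent space of $MTG$. By condition (2) it suffices to fix a single representative $\widetilde{g}^\perp = (d,P,\nabla,\psi)$ with $P$ a principal $G_{d-r}$-bundle, since iterating condition (2) expresses the general homotopy class as $\int_{\R^\bullet}$ of such a class on a bundle of the form $\underline{\R}^\bullet\oplus V$. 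Let $\nu\in E^r(\overline{V})$ be the topological Thom class determined by $g^{\perp,\mathrm{top}}$; the characteristic form $\cw_{g^\perp}(\mathrm{ch}(\mathcal{G}))$ satisfies $\mathrm{Rham}(\cw_{g^\perp}(\mathrm{ch}(\mathcal{G}))) = \mathrm{Td}(\nu)$, so the normal analogue of Lemma \ref{lem_diff_thom_classification} applies with this as the prescribed form.

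The core step is pull-back from a highly connected universal model. I would take a manifold $\mathcal{B}$ with an $(n+1)$-connected map to $BG_{d-r}$, where $n=\dim M$, factor the classifying data of $(V,g^\perp)$ through a smooth map $f\colon M\to\mathcal{B}$, equip the pulled-back universal bundle with a connection, and let $g^\perp_{\mathcal{V}}$ be the resulting differential stable normal $G$-structure on the universal rank-$r$ bundle $\mathcal{V}\to\mathcal{B}$, arranged so that the induced bundle map $f_V\colon V\to\mathcal{V}$ pulls $g^\perp_{\mathcal{V}}$ back to $g^\perp$. By the normal version of Lemma \ref{lem_diff_thom_classification}, any two homotopy classes of differential Thom classes on $(\mathcal{V},g^\perp_{\mathcal{V}})$ with $\mathrm{Td}$ equal to $\cw_{g^\perp_{\mathcal{V}}}(\mathrm{ch}(\mathcal{G}))$ differ by an element of $H^{-1}(\mathcal{B};\Ori(\mathcal{V})\otimes_\R V_E^\bullet)$ modulo $\mathrm{Td}(\nu(g^\perp_{\mathcal{V}}))\cup a(E^{-1}(\mathcal{B}))$. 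Since $f^*$ annihilates this group (see below), the class
\begin{align*}
    [\widehat{\nu}(g^\perp)] := f_V^*[\widehat{\nu}(g^\perp_{\mathcal{V}})]
\end{align*}
is independent of the choice of $[\widehat{\nu}(g^\perp_{\mathcal{V}})]$, and conditions (1) and (2) force this to be the definition. Independence of the remaining choices---of $\mathcal{B}$, of the factorization through it, of the connection, and of the identifications implicit in the normal representative---follows from cofinality exactly as in Proposition \ref{prop_diff_Thom_unique}, by dominating any two choices by a common third, and conditions (1), (2), (3) then hold by construction and by their counterparts over $\mathcal{B}$.

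The only point where the hypothesis that $E$ is rationally even enters is the vanishing of $f^*\colon H^{-1}(\mathcal{B};\Ori(\mathcal{V})\otimes_\R V_E^\bullet)\to H^{-1}(M;\Ori(V)\otimes_\R V_E^\bullet)$: because $\mathcal{B}\to BG_{d-r}$ is $(n+1)$-connected, $f^*$ factors through $H^{-1}(BG_{d-r};(EG_{d-r}\times_{G_{d-r}}\R_{G_{d-r}})\otimes_\R V_E^\bullet)$---using the canonical identification $\Ori(\mathcal{V})\cong\Ori(\mathcal{P}\times_{\rho_{d-r}}\R^{d-r})$ supplied by $\psi$---and that group vanishes since $E$ is rationally even, exactly as in the proof of Proposition \ref{prop_diff_Thom_unique}; equivalently, as in the footnote there, it is enough that $H^{-1}(MTG;V_E^\bullet)=0$. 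Finally, dependence only on the homotopy class of $g^\perp$ (Definition \ref{def_diff_Gstr_vec_normal_part2}(4)) follows as in the tangential case: such a homotopy changes only $f_V$, up to homotopy rel $f$ and $V$, and pulling $[\widehat{\nu}(g^\perp_{\mathcal{V}})]$ back along that homotopy yields a homotopy of differential Thom classes on $V$. I do not expect a serious obstacle: the construction is formally identical to the tangential one, and the only real care needed is in matching the normal classifying data with $MTG$ and confirming the relevant degree-$(-1)$ cohomology vanishes.
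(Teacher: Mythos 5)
Your proposal is correct and follows essentially the same route as the paper, which itself proves this proposition only by asserting that ``the same proof as that of Proposition \ref{prop_diff_Thom_unique} shows the following''; your systematic substitutions (the normal Chern--Weil map \eqref{eq_def_cw_hom_normal_part2}, $BG_{d-r}$ in place of $BG_r$, the identification $\Ori(\mathcal{V})\cong\Ori(\mathcal{P}\times_{\rho_{d-r}}\R^{d-r})$, and the vanishing of the relevant degree-$(-1)$ group by rational evenness) are exactly the intended adaptations. No gap.
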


Let $p \colon N \to X$ be a proper submersion between manifolds of relative dimension $r$, equipped with a differential stable $G$-structure $g_p$ on the relative tangent bundle $T(p)$ represented by $\widetilde{g}_p = (d, P, \nabla, \psi)$. 
Choose an embedding $\iota \colon N \hookrightarrow \R^k \times X$ over $X$ for $k$ large enough, a tubular neighborhood $W$ of $N$ in $\R^k \times X$ with a vector bundle structure $W \to N$ so that it is a map over $X$ (this is possible because $p$ is a submersion). 
Then we get an isomorphism
\begin{align}
   \psi_W \colon ( P \times_{G_d} \R^d )\oplus W \simeq \underline{\R}^{d-n} \oplus T(p) \oplus W \simeq \underline{\R}^{d-n+k}
\end{align}
of vector bundles over $N$. 
As a result, we get a differential stable normal $G$-structure $g_W^\perp$ on the vector bundle $W \to N$, represented by $\widetilde{g}_W^\perp = (d-n+k, P, \nabla, \psi_W)$.  
For $g_W^\perp$, Proposition \ref{prop_diff_Thom_unique_normal} assigns a differential Thom class whose homotopy class is canonically determined, 
\begin{align}
    \widehat{\nu}(g_W^\perp) \in \widehat{E}^{k-r}_{\mathrm{prop}/N}(W). 
\end{align}
We consider the composition, 
\begin{align}\label{eq_def_diff_push_tangential}
    \widehat{E}^n(N) \xrightarrow{\cdot \widehat{\nu}(\widetilde{g}_W^{\perp})} \widehat{E}^{n+k-r}_{\mathrm{prop}/N}(W) \xrightarrow{\iota_*} \widehat{E}^{n+k-r}_{\mathrm{prop}/X}(\R^k \times X) \xrightarrow{\mathrm{desusp}}\widehat{E}^{n-r}(X). 
\end{align}
The following proposition can be shown in the same way as Proposition \ref{prop_diff_push_welldef}. 
\begin{prop}\label{prop_diff_push_welldef_tangential}
The composition \eqref{eq_def_diff_push_tangential} only depends on the differential stable $G$-structure $g_p$ on $T(p)$.
\end{prop}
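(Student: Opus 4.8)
The plan is to mirror, essentially line for line, the proof of Proposition~\ref{prop_diff_push_welldef}, feeding in the tangential inputs in place of the normal ones: the differential stable normal $G$-structure $g_W^\perp$ on the tubular neighborhood $W\to N$ induced from $g_p$ together with the embedding data, the differential Thom class $\widehat{\nu}(g_W^\perp)$ canonically attached to it by Proposition~\ref{prop_diff_Thom_unique_normal}, and the characteristic form $\cw_{g_W^\perp}(\mathrm{ch}(\mathcal{G}))$. First I would list the ambiguities entering the composition~\eqref{eq_def_diff_push_tangential}: (i) the embedding $\iota\colon N\hookrightarrow\R^k\times X$ over $X$ together with the tubular neighborhood $W\to N$, taken to be a map over $X$; (ii) the isomorphism $\psi_W$, which determines the induced $g_W^\perp$; and (iii) the choice of an actual differential Thom class $\widehat{\nu}(g_W^\perp)$ inside the canonical homotopy class $[\widehat{\nu}(g_W^\perp)]$. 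Since any two admissible choices of $\psi_W$ compatible with a given representative of $g_p$ differ by a homotopy, they produce homotopic differential stable normal $G$-structures on $W$, so by the last sentence of Proposition~\ref{prop_diff_Thom_unique_normal} the homotopy class $[\widehat{\nu}(g_W^\perp)]$ is unaffected; this disposes of~(ii).

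For~(iii), with the embedding data fixed, two choices $\widehat{\nu}_0,\widehat{\nu}_1$ of differential Thom class in $[\widehat{\nu}(g_W^\perp)]$ are joined by a homotopy $\widehat{\nu}_I\in\widehat{E}^{k-r}_{\mathrm{prop}/(I\times N)}(I\times W)$ in the sense of Definition~\ref{def_diff_Thom}, satisfying $\int_{(I\times W)/(I\times N)}R(\widehat{\nu}_I)=\mathrm{pr}_N^*\mathrm{Td}(\widehat{\nu}_0)$. I would then reproduce the commutative diagram~\eqref{diag_proof_diff_push_welldef} over $I\times N$, $I\times W$, and $I\times X$: evaluated on the pullback $\mathrm{pr}_N^*\widehat{e}$ of any $\widehat{e}\in\widehat{E}^n(N)$, its bottom row yields precisely the difference of the two pushforwards built from $\widehat{\nu}_0$ and $\widehat{\nu}_1$, and its right square commutes by the homotopy formula~\cite[Lemma~1]{BunkeSchick2010}. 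It therefore suffices to show that $R(\mathrm{pr}_N^*\widehat{e})$ dies along the top (curvature) row. Here one uses that $W\to N$ is a map over $X$ to factor the fiber integration through $I\times N$; together with $\int_{(I\times W)/(I\times N)}R(\widehat{\nu}_I)=\mathrm{pr}_N^*\cw_{g_W^\perp}(\mathrm{ch}(\mathcal{G}))$ (condition~(3) of Proposition~\ref{prop_diff_Thom_unique_normal}) this rewrites the top-row image as $\int_{(I\times X)/X}\mathrm{pr}_X^*\bigl(\int_{N/X}R(\widehat{e})\wedge\cw_{g_W^\perp}(\mathrm{ch}(\mathcal{G}))\bigr)$, which vanishes because the integrand is a pullback from $X$.

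Next I would clear the remaining choices exactly as in the normal case. Replacing $k$ by $k+1$ and $W$ by $\underline{\R}\oplus W$ replaces $g_W^\perp$ by $g^\perp_{\underline{\R}\oplus W}$; condition~(2) of Proposition~\ref{prop_diff_Thom_unique_normal} together with compatibility of desuspension with the extra $\R$-factor shows the composition is unchanged, and the same remark absorbs passage to the stabilization $\widetilde{g}_p(1)$ of the chosen representative of $g_p$. Finally, any two systems of embedding-plus-tubular-neighborhood data (both maps over $X$) can, after a common stabilization, be connected by corresponding data over $I\times X$ restricting to the given systems at the two endpoints; applying the cylinder argument of the previous paragraph then shows the two resulting pushforwards coincide. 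Hence~\eqref{eq_def_diff_push_tangential} depends only on $g_p$.

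I expect the only delicate point to be the bookkeeping around the induced normal structure: one must check that $g_W^\perp$ is well-defined up to homotopy independently of the auxiliary data compatible with a fixed $g_p$ (unwinding $\psi_W\colon (P\times_{G_d}\R^d)\oplus W\simeq\underline{\R}^{d-n+k}$ through $\psi$), and that on the cylinder every auxiliary object---the embedding over $I\times X$, the tubular neighborhood that remains a map over $I\times X$, and the differential Thom class---can be chosen to restrict correctly at the endpoints. Keeping the tubular neighborhood a map over the base throughout is exactly what legitimizes the fiber-integration factorization used in the vanishing step, so this is the hinge of the argument; everything else is a transcription of Subsection~\ref{app_subsec_normal} with Proposition~\ref{prop_diff_Thom_unique} replaced by Proposition~\ref{prop_diff_Thom_unique_normal} and $\cw_{g_p}$ by $\cw_{g_W^\perp}$.
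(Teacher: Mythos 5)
Your proposal is correct and matches the paper exactly: the paper proves Proposition \ref{prop_diff_push_welldef_tangential} simply by asserting it "can be shown in the same way as Proposition \ref{prop_diff_push_welldef}," and your argument is precisely that transcription, with Proposition \ref{prop_diff_Thom_unique_normal} replacing Proposition \ref{prop_diff_Thom_unique}, the induced normal structure $g_W^\perp$ on the tubular neighborhood replacing $g_P$, and the same cylinder/homotopy-formula vanishing step and stabilization bookkeeping. The delicate points you flag (well-definedness of $g_W^\perp$ up to homotopy, keeping $W\to N$ a map over the base so the fiber integration factors) are exactly the ones implicit in the paper's normal-case proof.
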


Proposition \ref{prop_diff_push_welldef_tangential} allows us to define the following. 
\begin{defn}\label{def_diff_push_tangential}
Let $p \colon N \to X$ be a proper submersion between manifolds of relative dimension $r$, equipped with a differential stable $G$-structure $g_p$ on the relative tangent bundle $T(p)$. 
We define the {\it differential pushforward map}, 
\begin{align*}
    (p, g_p)_* \colon \widehat{E}^n(N) \to \widehat{E}^{n-r}(X)
\end{align*}
to be the composition \eqref{eq_def_diff_push_tangential}. 
\end{defn}

Now we turn to the Hopkins-Singer's models as in Subsection \ref{app_subsec_HS}. 
Take fundamental cocycles $\iota_E$ and $\iota_{MTG}$ for $E$ and $MTG$, respectively. 
As explained there, $\widehat{E}^*_\HS(-; \iota_E)$ admits a canonical multiplicative structure. 
In the normal case, a differential pushforward
is defined by a map of differential function spaces
\begin{align}\label{eq_app_hat_mathcalG_tangential}
    \widehat{\mathcal{G}} \colon \left(MTG_{-r} \wedge (E_{n})^+; V_{\mathcal{G}}(\iota_{MTG})_{-r} \cup \iota_E \right) \to (E; \iota_E)_{n-r}. 
\end{align}
whose underlying map factors as $MTG_{-r} \wedge (E_{n})^+ \xrightarrow{\mathcal{G} \wedge \mathrm{id}} E_{-r} \wedge (E_n)^+ \xrightarrow{\mu_{-r, n}} E_{n-r}$. 
By the same argument to the normal case, the map \eqref{eq_app_hat_mathcalG_tangential} induces the map of differential cohomologies for any real vector bundle $V \to M$,
\begin{align}\label{eq_HS_multi_5_tangential}
    (\widehat{MTG})_{\HS, \mathrm{prop}/M}^{-r}(V; \iota_{MTG}) \otimes \widehat{E}^{n}_\HS(M; \iota_E) \to 
     \widehat{E}_{\HS, \mathrm{prop}/M}^{-r}(V; \iota_E)\otimes \widehat{E}^{n}_\HS(M; \iota_E)
     \xrightarrow{\cdot}
    \widehat{E}^{n-r}_{\HS, \mathrm{prop}/M}(V; \iota_E). 
\end{align}

Let $p \colon N \to X$ be a proper submersion between manifolds of relative dimension $r$, equipped with a differential stable $G$-structure $g_p$ on the relative tangent bundle $T(p)$ represented by $\widetilde{g}_p = (d, P, \nabla, \psi)$. 
A {\it Hopkins-Singer's tangential differential $BG$-orientation} $g_p^{\HS}$ consists of choices of an embedding $N \subset \R^k \times X$, a tubular neighborhood $W$, a vector bundle structure $W \to N$ as in the first part of this subsection (in general $W \to N$ is not required to be a map over $X$), and
a lift of a classifying map for $(W, g_W^{\perp, \mathrm{top}})$ of the induced normal structure to a
differential function $t(g_p^{\HS}) \colon \overline{W} \to (MTG_{k-r}; (\iota_{MTG})_{k-r})$. 
Then, the differential function $t(g_p^{\HS})$ represents a differential Thom class for $(W, g^{\perp,\mathrm{top}}_W)$,
\begin{align}\label{eq_diff_thom_MTG}
    \left\langle t(g_p^{\HS})\right\rangle \in (\widehat{MTG})_{\HS, \mathrm{prop}/N}^{k-r}(W; \iota_{MTG}). 
\end{align}
Now we define the following. 

\begin{defn}\label{def_lift_HS_ori_tangential}
In the above settings, Hopkins-Singer's tangential differential $BG$-orientation $g_p^{\HS}$ is said to be a {\it lift} of $g_p$ if, in the notations above, 
\begin{itemize}
    \item The vector bundle structure $W \to N$ is a map over $X$, and
    \item The homotopy class $\left[\left\langle t(g_p^{\HS})\right\rangle\right]$ of the differential Thom class $\left\langle t(g_p^{\HS})\right\rangle$ is the one associated to $g^\perp_W$ by Proposition \ref{prop_diff_Thom_unique_normal} (applied to $E = MTG$).  
\end{itemize}
\end{defn}
In particular, this means that, 
\begin{align*}
    \cw_{g^\perp_W}(\mathrm{ch}(\mathrm{id}_{MTG})) = \mathrm{Td}\left(\left[t(g_p^{ \HS})\right]\right)
    := \int_{W/N} R\left(\left[t(g_p^{\HS})\right] \right).  
\end{align*}
where $\mathrm{ch}(\mathrm{id}_{MTG}) \in H^0(MTG; V_{MTG}^\bullet)$. 

Let us take $\widehat{e} \in \widehat{E}_\HS^{n}(N; \iota_E)$. 
By the same procedure as in the last paragraph of Subsection \ref{app_subsec_HS}, the tangential variant of \cite[Section 4.10]{HopkinsSinger2005} using the map \eqref{eq_app_hat_mathcalG_tangential} and the open embedding $W \hookrightarrow \R^k \times X$ produces the element $(p, g_p^{\HS})_*\widehat{e} \in \widehat{E}_\HS^{n-r}(X; \iota_E)$. 
We get
\begin{prop}\label{prop_push_HS=general_tangential}
In the settings of Definition \ref{def_lift_HS_ori_tangential}, 
the differential pushforward map $(p, g_p)_* \colon \widehat{E}_\HS^n(N; \iota_E) \to \widehat{E}_\HS^{n-r}(X; \iota_E)$ in Definition \ref{def_diff_push_tangential} applied to $\widehat{E}_\HS^*(-; \iota_E)$ coincides with the tangential variant of the differential pushforward map $(p, g_p^{\HS})_*$ in \cite{HopkinsSinger2005} as long as we use $g_p^{\HS}$ lifting $g_p$. 
\end{prop}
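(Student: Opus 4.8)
The plan is to imitate the proof of Proposition \ref{prop_push_HS=general_normal} given in Subsection \ref{app_subsec_HS}, replacing $MG$ by $MTG$ and stable $G$-structures by stable \emph{normal} $G$-structures on the tubular neighborhood throughout. Concretely, I would first recall how the Hopkins--Singer tangential differential pushforward $(p, g_p^{\HS})_*$ is computed: given $\widehat{e} \in \widehat{E}_\HS^n(N; \iota_E)$ represented by a differential function $t(\widehat{e}) \colon N \to (E;\iota_E)_n$, and given the embedding $N \subset \R^k\times X$, the tubular neighborhood $W\to N$ and the lift $t(g_p^{\HS})\colon \overline{W}\to (MTG_{k-r};(\iota_{MTG})_{k-r})$ supplied by $g_p^{\HS}$, one pushes the pair $(t(g_p^{\HS}), t(\widehat{e}))$ through the tangential analogue of the left-bottom composition in diagram \eqref{diag_HS_multi} (the left vertical arrow built from the cochain homotopy $B$ and the cochain $c_{\mathcal{G}}$, then $\widehat{\mathcal{G}}$ of \eqref{eq_app_hat_mathcalG_tangential}), and finally applies the open embedding $W\hookrightarrow \R^k\times X$ and desuspension.

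Next I would establish the tangential analogue of \eqref{diag_HS_multi} by the very argument of the normal case: the two triangles commute strictly, and the square commutes up to a coboundary because $H^{-1}(MTG; V_E^\bullet)=0$ and $\widetilde{H}^{n+k-r-1}(E_{k-r}\wedge E_n; V_E^\bullet)=0$, both of which hold since $E$ is rationally even (by the argument in the proof of \cite[Lemma 3.8]{BunkeSchick2010}). Therefore $(p, g_p^{\HS})_*\widehat{e}$ is equally computed by the top-right composition, which is precisely the tangential module map \eqref{eq_HS_multi_5_tangential} applied to the differential Thom class $\langle t(g_p^{\HS})\rangle \in (\widehat{MTG})_{\HS, \mathrm{prop}/N}^{k-r}(W; \iota_{MTG})$ and to $\widehat{e}$, followed by $\iota_*$ and desuspension. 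I would then invoke Definition \ref{def_lift_HS_ori_tangential}: since $g_p^{\HS}$ lifts $g_p$, the vector bundle structure $W\to N$ is a map over $X$ and the homotopy class $[\langle t(g_p^{\HS})\rangle]$ equals the canonical class $[\widehat{\nu}(g_W^\perp)]$ assigned by Proposition \ref{prop_diff_Thom_unique_normal} (applied to the given $E$) to the induced differential stable normal $G$-structure $g_W^\perp$ on $W\to N$. Plugging $[\widehat{\nu}(g_W^\perp)]$ into the composition just described reproduces verbatim the composition \eqref{eq_def_diff_push_tangential} defining $(p, g_p)_*$ in Definition \ref{def_diff_push_tangential} --- \emph{provided} the outcome depends only on the homotopy class of the Thom class and not on a chosen representative differential function; but this independence is exactly a step of the proof of Proposition \ref{prop_diff_push_welldef_tangential}, where one connects two representatives by a homotopy $\widehat{\nu}_I$ of differential Thom classes and uses the homotopy formula \cite[Lemma 1]{BunkeSchick2010} together with the normalization $\mathrm{Td}(\widehat{\nu}_I)=\mathrm{pr}_N^*\mathrm{Td}(\widehat{\nu}_0)$ to see the difference vanishes. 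Assembling these observations yields $(p, g_p^{\HS})_* = (p, g_p)_*$ on $\widehat{E}_\HS^*(-;\iota_E)$.

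The bulk of the verification --- the commutativity up to coboundary of the tangential analogue of \eqref{diag_HS_multi}, and the independence of \eqref{eq_def_diff_push_tangential} on the auxiliary choices of cochains and of the representative --- is routine once the normal case of Subsections \ref{app_subsec_normal}--\ref{app_subsec_HS} is in hand, since every ingredient ($MG \rightsquigarrow MTG$, the canonical multiplicative structure on $\widehat{E}_\HS$, the properly supported module structure, the desuspension map) has an evident tangential counterpart with identical formal properties. The one point that wants care, and which I expect to be the main --- though still elementary --- obstacle, is the bookkeeping of which bundle carries the normal $G$-structure: here it is the tubular neighborhood $W\to N$ with $g_W^\perp$ coming from the isomorphism $\psi_W \colon (P\times_{G_d}\R^d)\oplus W \simeq \underline{\R}^{d-n+k}$, not $T(p)$ itself, so that Proposition \ref{prop_diff_Thom_unique_normal} (applied once with $E=MTG$ to pin down $[\langle t(g_p^{\HS})\rangle]$, and once with the given $E$ to furnish the canonical Thom class entering \eqref{eq_def_diff_push_tangential}) is what makes the two descriptions match.
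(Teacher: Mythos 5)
Your proposal is correct and takes essentially the same route as the paper: the paper's own proof of Proposition \ref{prop_push_HS=general_tangential} simply invokes ``the same procedure as in the last paragraph of Subsection \ref{app_subsec_HS}'', i.e.\ rewriting the Hopkins--Singer pushforward via the tangential analogue of \eqref{diag_HS_multi} and identifying the resulting differential Thom class with the canonical one through the lift condition and Propositions \ref{prop_diff_Thom_unique_normal} and \ref{prop_diff_push_welldef_tangential}. The details you spell out (the coboundary argument from $E$ being rationally even, and the two applications of Proposition \ref{prop_diff_Thom_unique_normal}, once for $MTG$ and once for $E$) are exactly the ingredients the paper uses, there only implicitly.
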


Thus we conclude that the differential pushforward maps in Definition \ref{def_diff_push_tangential} for $\widehat{E}_\HS^*(-; \iota_E)$ comes from maps between differential function spaces \eqref{eq_app_hat_mathcalG_tangential}. 
As we mentioned in Footnote \ref{footnote_why_HS}, 
this is the reason why we want to use the Hopkins-Singer's formulation. 

\section*{Acknowledgment}
The author is grateful to Kiyonori Gomi and Kazuya Yonekura for helpful discussion and comments. 
She is supported by Grant-in-Aid for JSPS KAKENHI Grant Number 20K14307 and JST CREST program JPMJCR18T6.

\bibliographystyle{ytamsalpha}
\bibliography{QFT}

\end{document}